\numberwithin{figure}{section}
\newtheorem{theorem}{Theorem}[section]       
\newtheorem{prop}[theorem]{Proposition} \newtheorem{lemma}[theorem]{Lemma}
\newtheorem{corollary}[theorem]{Corollary}
\newtheorem{defn}[theorem]{Definition}
\newtheorem{remark}[theorem]{Remark}
\newtheorem{exa}[theorem]{Example}
\def\bbK{{\mathbb K}} 
\def\A{\mathcal{A}}
\def\T{\mathcal{T}}
\def\A{\mathcal{A}}
\def\supp {\mathop {\rm supp}\nolimits}
\def\aut {\mathop {\rm Aut}\nolimits}
\def\Der {\mathop {\rm Der}\nolimits}
\newcommand{\Dim}{{\rm dim}}
\newcommand{\ann}{{\rm ann}}
\newcommand{\spa}{{\rm span}}
\def\Der {\mathop {\rm Der}\nolimits}
\newcommand{\rk}{{\rm rk}}
\def\T {\mathcal{T}}
\def\w{\omega}
\newcommand{\bK}{\mathbb{K}}
\newcommand{\loops}{{\rm L}}
\newcommand{\nloops}{{\rm NL}}
\def\diag{\mathop{\hbox{\rm diag}}}
\title[]{Derivations and loops of some evolution algebras.}
\author[Yolanda Cabrera Casado]{Yolanda Cabrera Casado}
\address{Yolanda Cabrera Casado: Departamento de Matem\'atica Aplicada, E.T.S. Ingenier\'\i a Inform\'atica, Universidad de M\'alaga, Campus de Teatinos s/n. 29071 M\'alaga.   Spain. }
\email{yolandacc@uma.es}
\author[Paula Cadavid]{Paula Cadavid}
\address{Paula Cadavid:  Centro de Matemática, Computação e Cognição,  Universidade Federal do ABC, Avenida dos Estados, 5001-Bangu - Santo Andr\'e - SP, Brazil.}
\email{paula.cadavid@ufabc.edu.br, pacadavid@gmail.com}
\author[Tiago Reis]{Tiago Reis}
\address{Tiago Reis: Universidade Tegnol\'ogica  Federal do Paran\'a, Av. Alberto Carazzai, 1640 - Campus Corn\'elio Proc\'opio - PR, Brazil and Universidade Federal do ABC, Avenida dos Estados, 5001 - Bangu - Santo Andr\'e - SP, Brazil.}
\email{treis@utfpr.edu.br }
\thanks{ The  first author is supported by the Spanish Ministerio de Ciencia e Innovaci\'on through project  PID2019-104236GB-I00 and  by the Junta de Andaluc\'{\i}a  through projects  FQM-336 and UMA18-FEDERJA-119,  all of them with FEDER funds. 
}
\subjclass[2010]{17A36, 05C25, 17D92, 17D99}
\keywords{Genetic Algebra, Volterra Evolution Algebra, Derivation, Graph} 
\begin{document}

\maketitle
\date{}

\begin{abstract} 
In this work we study the space of derivations of non-degenerate evolution algebras. We improve some results obtained recently in the literature and, as a consequence, we advance in the description of the derivations for $n$-dimensional Volterra evolution algebras. In addition, we introduce the notion of loop of an evolution algebra and we analyze under which conditions the set of loops is invariant under change of basis. 

\end{abstract}

\section{Introduction}

\subsection{Evolution algebras ans its derivations}
The evolution algebras are non-associative algebras introduced in \cite{tian1} by Tian and Vojtechovsky in 2006, who established the theoretical foundations of these structures. In addition to emerging to model non-Mendelian genetic, in \cite{tian} Tian identified a whole series of connections with other areas such as graph theory, group theory and discrete-time Markov chains, among others. For a recent review of the advances in this type of algebras we refer the reader to \cite{ceballos}. 

In this work we are interested in studying the space of derivations of some evolution algebras. We point out that although there are many works describing partially such a space, by using different approaches, a complete characterization is still an unfinished task. For any evolution algebra, \cite{tian} described the derivations in terms of a system of equations which becomes the starting point for the characterization of the derivations for different families of evolution algebras.
The study of the derivations of evolution algebras with non-singular structure matrices was done in \cite{COT} for complex algebras and extended in \cite{Elduque/Labra/2019} for algebras over a field with any characteristic.  While \cite{cardoso} gave a complete characterization for the space of derivations of two-dimensional evolution algebras, \cite{Alsarayreh/Qaralleh/Ahmad/2017} studied the derivations of certain three-dimensional evolution algebras (solvable and nilpotent). Later,  \cite{Qaralleh/Mukhamedov/2019}
 provided a description of the derivations of three dimensional Volterra evolution algebras. In \cite{PMP3} the authors provided a characterization for the case of evolution algebras associated to graphs over a field of zero characteristic, which was generalized later for fields of any characteristic in \cite{Reis/Cadavid}. The novelty in the approach developed in \cite{PMP3,Reis/Cadavid} rely on the connection between the set of equation mentioned above and the structural properties of the considered graph. Such an approach was explored in \cite{YPMP}, where the authors studied the space of derivations of some non-degenerate irreducible evolution algebras depending on the twin partition of an associated directed graph.

One of the contributions of our work is to provide a characterization, in a sense to be defined later, of the space of derivations of non-degenerate evolution algebras. We improve some results obtained recently in the literature and, as a consequence, we advance in the description of the derivations for $n$-dimensional Volterra evolution algebras.

 \subsection{Loops of an evolution algebra}

Our approach to deal with derivations is inspired in a combination of arguments developed by \cite{YPMP} and \cite{YKM}. While the former explores the structure of a directed graph associated to the evolution algebra, the last rely on a partition of the considered basis. One of the peculiarities of evolution algebras is that they are not defined by identities, so their study usually follows a different strategy than the one associated to other non-associative algebras like Jordan, Lie or power-associative algebras. An usual approach to deal with an evolution algebra is to fix its natural basis. However, many properties are not invariant through the chosen basis. Some examples of this is the connectedness of the associated direct graph (see \cite[Example 2.5]{Elduque/Labra/2015}) or the skew-symmetry of the structure matrix (see Example \ref{volterra:change_basis}). Therefore an interesting task is to know which properties are invariant under the chosen basis. Motivated by this question we study the phenomenon showed in Example \ref{volterra:change_basis}, where we change the basis of a Volterra evolution algebra and as a consequence we verify that each element of the diagonal of the structure matrix remains equal to zero. We prove that this is true in general for Volterra evolution algebras. Moreover, it was trying to answer this question that we solve a more general problem; namely, when the number of zeros in the diagonal of a structure matrix of an evolution algebra is invariant under the change of natural basis. We point out that this problem has been addressed previously in \cite[Proposition 2.13]{YKM} for the case where the algebra is perfect.

The nonzero elements belonging to the diagonal of the structure matrix are what we call the loops of an evolution algebra. Although our results related to this part of the paper are of independent interest, we observe that knowing the loops of an evolution algebras was useful to the study of its derivations in \cite{YPMP}.

 \subsection{Organization of the paper}
 Now, we will show how this paper is organized. In Section \ref{preli} we introduce the preliminary definitions and notations. Taking into account \cite[Theorem 2.11]{boudi20} we define a natural decomposition of a natural basis of an evolution algebra which will be an important tool for later results. 
 
 We begin Section \ref{dernon} by establishing a characterization of space of derivations for a non-degenerate evolution algebra in Proposition \ref{PP}. As a consequence, the Corollary \ref{blockder} proves that the derivation is a block matrix, up to reordering. The Proposition \ref{prop34} gives a connection between the set of derivations  and the fact of having an unique natural basis (in the sense that whatever other natural basis can obtain by permutations or product by scalars). This fact generalizes the results of \cite[Theorem 2.1]{COT}, \cite[Theorem 4.1 item (1)]{Elduque/Labra/2019} and \cite[Theorem 1]{YPMP}. In the particular case of $\Dim(\A^2)=1$ and the product of any two square elements of basis is different from zero, we provide  necessary and sufficient conditions for a linear operator to be a derivation (Proposition \ref{dii}). In fact, we show in Corollary \ref{dimen1} that, up to reordering, the derivation is a skew-symmetric matrix. One of the requirements is related to the fact that the matrix of derivation is a diagonal matrix. For this reason, we ask, on the one hand when the derivation will have, under suitable conditions, some of the entries of the main diagonal equals and, on the other hand, the main diagonal null. Proposition \ref{diago1} and Theorem \ref{diagonalceros} answer to these questions, respectively. 
 
 The section \ref{volterrader} is devoted to study of derivations in the case of Volterra evolution algebras. In the same way as before, we show a characterization of the space of derivations for this specific case. Fixed an Volterra evolution algebra and verifying that the product of any two square of elements of basis is different from zero, the main result of this section (Theorem \ref{teo:dervolt}) provides a way of finding another Volterra evolution algebra with structure matrix diagonal and space of derivations the same as the Volterra evolution algebra original. As the structure matrix is diagonal, to calculate the set of derivations is equivalent to calculate the set of derivations over certain evolution ideals of the algebra (Corollary \ref{blockmatrix}). In fact, what we need is to find conditions that ensure that the main diagonal of derivations is null.  In Proposition \ref{prop:sumalpha3=0} we claim that the requirement imposed on the elements of the natural basis in the Theorem \ref{teo:dervolt} can be replaced by other property related to the structure constants. Particularly, Proposition \ref{prop:e^4diineq0} and Proposition \ref{prop:alpha3=0} give conditions under which there exists a derivation of a non-degenerate Volterra evolution algebra is not a diagonal matrix.
 
 In Section \ref{loops} we start by defining the loops of an evolution algebra and study when this set is invariant under change of natural basis. First, we will consider the set of no loops and we prove in Theorem \ref{ceros}  that if an element of a natural decomposition is contained within the set of no-loops then its corresponding element in another natural decomposition is also contained within it. Next, we will focus in the set of loops and we have that if an evolution algebra has not loops relative to a natural basis then it has not loops relative to any natural basis (Corollary \ref{coro:loopsperfect2}), as we have said before. Moreover, the Theorem \ref{theo:invariance_of_loops}  and Proposition \ref{B_i: pequeno} provide  convenient criteria in terms of the elements of the natural decomposition for the number of loops to be an invariant. By contrast, we also give in Theorem \ref{theo:notinvariance_of_loops} some conditions  to find a new natural basis such that the number of loops not stay constant. A summarizing of conditions for invariability of number of loops can be seen in Corollary \ref{recopi}.

\section{Preliminaries}\label{preli}

In what follows $\bbK$ will denote, unless we state otherwise, a field such that ${\rm char}(\bK)=0$. In order to state the first definitions let $\Lambda:=\{1,\ldots,n\}$. An $n$-dimensional $\bbK$-algebra $\A$ is called  \emph{evolution algebra} if it  admits a basis $B=\{e_i \}_{i \in \Lambda}$ such that $e_{i}e_{j}=0$ whenever $i \neq j$. A basis with this property is known as \emph{natural basis}. The scalars $\omega_{ij} \in \bbK$ such that \[e_i^2=\displaystyle\sum_{k \in \Lambda} \w_{ik} e_k\] are called the \emph{structure constants} of $\A$ relative to $B$ and the matrix $M_{B}=(\omega_{ik})$ is called the \emph{structure matrix} of $\A$ relative to $B$. When $\A=\A^2$ or equivalently when $M_B$ is invertible, it is said that $\A$ is \emph{perfect}.

If $u=\sum_{i\in \Lambda}\alpha_ie_i$ is an element of $\A$ then the \emph{support of} $u$ \emph{relative to} $B$ is defined  as $\supp_B(u):=\{i\in \Lambda:   \alpha_i \neq 0\}$. In general, if $X \subseteq \A$, we have that $\supp_B (X)= \cup_{x \in X} \; \supp_B (x)$. For $u=e_i^2$ support of $e_i^2$ is called the \emph{first-generation descendents} of $i$ relative to the natural basis $B$, i.e., $D^{1}(i)=\left\{k\in  \Lambda, \colon \, \omega _{ik}\neq 0\right\}.$ 
By analogy, given a subset $U\subseteq \Lambda $, we let $D^{1}(U):=\supp_B(W)$ where $W=\{e_i \in B \colon i \in U\}$. 
Similarly, we say that $j$ is a \emph{second-generation} descendent of $i$ whenever $j \in D^1(D^1(i))$. Therefore $$D^2(i)=\bigcup_{k \in D^1(i)} D^1(k).$$ 
By recurrence, we define the set of \emph{$mth-$generation descendents} of $i$ as $$D^m(i)=\bigcup_{k \in D^{m-1}(i)} D^1(k).$$ 
Finally, the set of descendents of $i$ is defined as
$$D(i)=\bigcup_{m \in \mathbb{N}} D^m(i).$$
An evolution algebra $\A$ is \emph{non-degenerate} if there is a natural basis $B$ such that $e_i^2 \neq 0$ for all $e_i \in B$. We remark that $\A$ is a non-degenerate evolution algebra if and only if $D^{1}(i)\neq \emptyset$ for all $i\in \Lambda$. By  \cite[Lemma 2.7]{Elduque/Labra/2015} this definition does not depend on the chosen natural basis since $\ann(\A)=\spa (\{e_i: e_i^2=0\})$ where $\ann(\A):=\{x\in \A: x\A=0 \}$. Therefore $\A$ is non-degenerate if and only if $\ann(\A)=0$.

On the other hand, an evolution algebra $\A$ is \emph{reducible} if there exist two nonzero ideals $I$ and $J$ of $\A$ such that $\A=I \oplus J$. In other case, it is called \emph{irreducible}.

 We follow the Definition 2.1 and Definition 2.8 in \cite{boudi20}.  An evolution algebra $\A$ has an \emph{unique} natural basis if the subgroup of $\aut_\bK(\A)$ such that map natural basis into natural basis is precisely $S_n\rtimes (\bK^\times)^n$. This group was depicted in \cite{CSV2} for $n=3$. On the other hand, it is said that $\A$ has  \emph{Property} \rm{(2LI)} if  for any two different vectors $e_i, e_j$ of a natural basis,  $\{e_i^2, e_j^2\}$ is linearly independent. Note that any perfect evolution algebra has the Property \rm{(2LI)} but the reciprocal is not true (see \cite[Example 2.9]{boudi20}).

 Let $V$ be a $\bK$-vector space and $S$ be a subset of $V$. We denote by $\spa(S)$ the vector subspace generated by $S$ and  $\rk(S)$ the \emph{rank of $S$}, that is, the dimension of $\spa(S)$ as vector space.
 
 We recall that $\A$ is a \emph{Volterra evolution algebra} if  there exists a natural basis $B$ of $\A$ such that $M_B$ is a skew-symmetric matrix. In this case we say that $\A$ is a \emph{ Volterra evolution algebra relative to $B$}. Since we are considering algebras over a field of zero characteristic, the matrix $M_B$ has null diagonal. This family of algebras was introduced in \cite{Qaralleh/Mukhamedov/2019} where the authors give a connection between this kind of algebras with the ergodicities of Volterra quadratic stochastic operators and, among other things, they show that these algebras are not nilpotent and they calculate its derivations for some cases.

Note that if $\A$ is a Volterra evolution algebra then is not true that for any natural basis $B$ the structure matrix  $M_{B}$ is skew-symmetric.

\begin{exa}\label{volterra:change_basis} \rm Let $\A$ be an evolution algebra, and  let $B=\{e_1, e_2,e_3\}$ be a natural basis such that 
\[M_{B}=\left(
\begin{array}{rrr}
0 & 1 & 0\\
-1 & 0 & 1\\
 0&-1 & 0 \\
\end{array} \right ).
\]
Therefore $\A$ is a Volterra evolution algebra. On the other hand, if $B^{\prime }=\{f_1,f_2,f_3\}$ is such that $f_1=2e_1+e_3$, $f_2=\frac{1}{2}e_1 + e_3$  and $f_3=e_2$, then $B^{\prime}$  is a natural basis of $\A$ such that 
\[M_{B^{\prime}}=\left(
\begin{array}{rrr}
0 & 0 & 3\\
0 & 0 & -\frac{3}{4}\\
-1 & 2 & 0 \\
\end{array} \right )
\]
is non skew-symmetric. 
\end{exa}

Now, we recall some basic definitions and notation for directed graphs. A \emph{directed graph} is a $4$-tuple $E=(E^0, E^1, s_E, r_E)$ where $E^0$, $E^1$ are sets and $s_E, r_E: E^1 \to E^0$ are maps. The elements of $E^0$ are called the \emph{vertices} of $E$ and the elements of $E^1$ the \emph{arrows} or \emph{directed edges} of $E$. For
$f\in E^1$ the vertices $r(f)$ and $s(f)$ are called the \emph{range} and the \emph{source} of $f$, respectively. If $E^0$ and $E^1$ are both finite we say  that $E$ is \emph{finite}. A vertex $v\in E^{0}$ is called \emph{sink} if it verifies that $s(f) \neq v$, for every $f\in E^1$.  
A \emph{path} or a \emph{path from $s(f_1)$ to $r(f_m)$} in  $E$, $\mu$, is a finite sequence of arrows $\mu=f_1\dots f_m$
such that $r(f_i)=s(f_{i+1})$ for $i\in\{1,\dots,(m-1)\}$. In this case we say that $m$ is the \emph{length} of the path $\mu$ and denote it by $\vert \mu \vert=m$. Let $\mu = f_1 \dots f_m$ be a path in $E$ with $ \vert\mu\vert =m\geq 1$. If
$v=s(\mu)=r(\mu)$, then $\mu$ is called a \emph{closed path based at $v$}.
If $\mu = f_1 \dots f_m$ is a closed path based at $v$ and $s(f_i)\neq s(f_j)$ for
every $i\neq j$, then $\mu$ is called a \emph{cycle based at} $v$ or simply a \emph{cycle}. A cycle of length $1$ will be said to be a \emph{loop}. Given a finite graph $E$, its \emph{adjacency matrix} is the matrix $A_{E}=(a_{ij})$
where $a_{ij}$ is the number of arrows from $i$ to $j$. A graph $E$ is said to satisfy Condition (Sing) if among two vertices of $E^0$
there is at most
one arrow. If exist a path from $i$ to $j$ in $E$, then we define the \emph{distance} from $i$ to $j$ as $\delta (i,j)=\min \{|\mu| , \mu \text{ is a path from }i \text{ to } j\}$.

There are several ways to associate a graph to an evolution algebra (see \cite{YMV,Elduque/Labra/2015}). We consider the directed graph described in \cite{YMV} as follows. Given a natural basis $B=\{e_i \}_{i\in \Lambda}$ of an evolution algebra $\A$ and its structure matrix $M_B=(\omega_{ij})\in  {M}_\Lambda(\mathbb K)$, consider the matrix
$P=(a_{ij})\in  {M}_\Lambda(\mathbb K)$ such that $a_{ij}=0$ if $\omega_{ij}=0$ and $a_{ij}=1$ if $\omega_{ij}\neq 0$. The \emph{graph associated to the evolution algebra} $\A$ (relative to the basis $B$), denoted by $E_\A^{B}$ (or simply by $E$ if the algebra $\A$ and the basis $B$ are understood) is the directed graph whose adjacency matrix is given by $P=(a_{ij})$. In this way, we only consider graphs satisfying  Condition Sing.

By analogy with graph theory we define the following notions. Let $\A$ be an evolution algebra with natural basis $B$ and let $i,j \in \Lambda$. We say that $i$ and $j$ are \emph{twins relative to $B$} if $D^1(i)=D^1(j)$. We notice that by defining the relation $\sim_{t_B}$ on the set of indices $\Lambda$ by $i\sim_{t_B} j$ whether $i$ and $j$ are twins relative to $B$, then $\sim_{t_B}$ is an equivalence relation. An equivalence class of the twin relation $\sim_{t_B}$ is referred to as a {\it twin class relative to $B$}. In other words, the twin class of an index $i$, that we will denote by $\mathcal{T}(i)$,  is the set $\mathcal{T}(i):=\{j\in \Lambda :i \sim_{t_B} j\}$. The set of all twin classes relative to $B$ of $\Lambda$ is denoted by $\Pi_B(\Lambda)$ and it is referred to as the \emph{twin partition relative to $B$} of $\Lambda$. If $\A$ has no twins relative to B, that is, if $ \text{ for all } \, i,j \in \Lambda, \,i\neq j,\, D^1(i) \neq D^1(j), $
\noindent
then we say that $\A$ is \emph{twin-free relative to} B.  These definitions depend on the chosen natural basis (see \cite[Example 2]{YPMP}).

One of our main purposes is to study the derivations of Volterra evolution algebras. Given an (evolution) $\bK$-algebra $\A$, a  \emph{derivation} of $\A$ is a linear map $d : \A \rightarrow  \A$  such that 
\[d(u \cdot v)= d(u) \cdot  v + u \cdot d(v),\]
for all $u,v \in \A$. The space of all derivations of $\A$ is denoted by $\Der(\A)$.  In \cite[Section 3.2.6]{tian}, it was proved that, if $\A$ is a evolution $\bK$-algebra with a natural  basis $B=\{e_i \}_{i \in \Lambda}$  then a linear map $d$ such that $d(e_i)=\sum_{k\in \Lambda} d_{ki}e_k$ is a derivation of the evolution algebra $\A$ if, and only if, it satisfies the following conditions: 
\begin{eqnarray}
\omega_{jk}d_{ij}+ \omega_{ik}d_{ji}=0, &\,\,\, \text{for }i,j,k\in\Lambda\text{ such that }i\neq j,\label{eq:der1}\\
\sum_{k\in \Lambda} \omega_{ik}d_{kj}=2\omega_{ij}d_{ii}, &\,\,\, \text {for }i,j\in\Lambda,\label{eq:der2}
\end{eqnarray}
From now on, we identify the linear map $d$ with the matrix $(d_{ij})$ relative to the basis $B$.

\begin{remark}\label{decom} \rm
According  to \cite[Theorem 2.11]{boudi20} if $\A$ is an  evolution algebra with a natural basis $B=\{e_i\}_{i \in \Lambda}$ then we can write $B$ as a disjoint union of subsets as follows:
\begin{equation}\label{B_i}
B=B_0 \cup B_1 \cup \ldots \cup B_r,\end{equation}
where $\ann(\A)= \spa(B_0)$, $\rk(B_t)=1$ for $1\leq t \leq r$ and $\rk(\{u^2, v^2\})=2$ if $u\in B_t$ and $v \in B_s$ with $t\not= s$. Therefore,  if we define $\Lambda_t:=\{ k \in \Lambda:\, \, e_k \in B_t\}$, $\eqref{B_i}$ implies that $\Lambda$ can be also expressed as disjoint union of subsets:
\begin{equation}\label{Lambda_i}
    \Lambda=\Lambda_0 \cup \Lambda_1 \cup \ldots \cup \Lambda_r,
    \end{equation}
where $\rk(\{e_i^2, e_j^2\})=1$  if  $i \in \Lambda_t, j \in \Lambda_{s}$ and $t \neq s$ and $\rk(\{e_i^2,e_j^2\})=2$, if $i, j \in \Lambda_t$, for some $ t \in \{1,\dots, r \}$. So, in this last case,  \begin{equation}\label{alphas} 
e_i^2=\alpha_{ji} e_j^2\end{equation} for some $\alpha_{ji} \in \mathbb{K}^{\times}$.
\end{remark}

This observation leads us to the following definition.

\begin{defn}
\label{rem:alphas}
  \rm
In the same conditions of Remark \ref{decom}, the partitions $B=B_0 \cup B_1 \cup \ldots \cup B_r$ and $\Lambda=\Lambda_0 \cup \Lambda_1 \cup \ldots \cup \Lambda_r$ are called  \emph{a natural decomposition} of $B$ and \emph{a natural decomposition of  $\Lambda $ relative to $B$}, respectively.
 \end{defn}  
 
 \begin{defn} \label{lambdaj}\rm
 Let $\A$ an evolution algebra. We define $$\Lambda (j)=\{ k \in \Lambda\setminus\Lambda_0;\, \, e_k^2 \text{ and  }e_j^2\text{ are linearly dependent}\}.$$
 Moreover, we can write $e_k^2=\alpha_{jk} e_j^2$, for some $\alpha_{jk} \in \mathbb{K}^\times$ and $j,k \in \Lambda(j)$.
 \end{defn}

\begin{remark}\label{unicidaddes}

\rm
Under the conditions of Remark \ref{decom}, if $B'$ is another natural  basis  of $\A$ with a natural decomposition  $B'=B'_0 \cup B'_1 \cup \ldots \cup B'_s$  then  by \cite[Remark 2.14]{boudi20} we know that $r=s$ and that it is possible to reorder  $B'$ in such a way that $\spa (B_0)= \spa (B'_0)$ and $B'_t \subseteq \spa(B_0 \cup B_t)$. In addition, it is easy to check that $\vert B_t \vert = \vert B_t' \vert $ for every $t \in \{1, \ldots, r\}$.

 From now on, when we have natural decompositions of two natural bases $B$ and $B'$ of an evolution algebra $\A$ we suppose that both decompositions are written taking into account this reordering. 
\end{remark}

\section{Derivations of a non-degenerate evolution algebra}\label{dernon}
In this subsection, we will investigate when a linear operator of a non-degenerate evolution algebra is a derivation. In particular, we will study the derivations of an evolution algebra with $\dim(\A^2)=1$.

\smallskip

The following proposition improves \cite[Proposition 1]{YPMP} in the sense that it provides a condition necessary and sufficient  under which a linear operator $d: \A \to \A$ is a derivation of a non-degenerate evolution algebra $\A$.

\begin{prop}\label{PP}
Let $\A$ be a non-degenerate evolution algebra with  a natural basis $B=\{e_i \}_{i \in \Lambda}$, structure matrix $M_B=(\w_{ij})$ and  let  $d : \A \rightarrow  \A$ be a linear map, $d=(d_{ij})$. Then $d\in \Der(\A)$ if and only if $d$ satisfies the following conditions: 

\begin{enumerate}[label=(\roman*)]
\item If $i,j \in \Lambda, i \neq j$, and $i \sim_{t_B} j$ then $d_{ji}=-\frac{\w_{jk}}{\w_{ik}}d_{ij}$, for all $k \in D^1(i)$.\label{PP1}
\item If $i,j \in \Lambda$ and $i \not\sim_{t_B} j$ then $d_{ji}=d_{ij}=0.$ \label{PP2}
\item \label{PP3} For any $i\in \Lambda$
$$
\sum_{k\in D^1(i)} \w_{ik}d_{kj}=\left\{
\begin{array}{ll}
0,&\text{ if }j\notin D^1(i),\\[.2cm]
2\w_{ij}d_{ii},&\text{ if }j\in D^1(i).
\end{array}\right.
$$ 
\end{enumerate}
\end{prop}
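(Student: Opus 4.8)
The plan is to derive conditions \ref{PP1}--\ref{PP3} directly from Tian's characterization, namely equations \eqref{eq:der1} and \eqref{eq:der2}, by exploiting the fact that $\A$ is non-degenerate, i.e.\ $D^1(i)\neq\emptyset$ for every $i\in\Lambda$. First I would rewrite \eqref{eq:der2}: since $\omega_{ik}=0$ exactly when $k\notin D^1(i)$, the sum $\sum_{k\in\Lambda}\omega_{ik}d_{kj}$ collapses to $\sum_{k\in D^1(i)}\omega_{ik}d_{kj}$, and the right-hand side $2\omega_{ij}d_{ii}$ is $0$ precisely when $j\notin D^1(i)$. That immediately yields \ref{PP3} as a verbatim restatement of \eqref{eq:der2}, so this part is essentially bookkeeping.

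The substantive work is translating \eqref{eq:der1} into \ref{PP1} and \ref{PP2}. I would fix $i\neq j$ and read \eqref{eq:der1}, $\omega_{jk}d_{ij}+\omega_{ik}d_{ji}=0$, as a condition that must hold for \emph{every} $k\in\Lambda$. Split on cases according to the membership of $k$ in $D^1(i)$ and $D^1(j)$. If $k\in D^1(i)\cap D^1(j)$, both coefficients are nonzero, so $d_{ji}=-\frac{\omega_{jk}}{\omega_{ik}}d_{ij}$. If $k\in D^1(i)\setminus D^1(j)$ then $\omega_{jk}=0$ and $\omega_{ik}\neq0$, forcing $d_{ji}=0$; symmetrically $k\in D^1(j)\setminus D^1(i)$ forces $d_{ij}=0$. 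Now the non-degeneracy hypothesis becomes crucial: if $i\not\sim_{t_B}j$, then $D^1(i)\neq D^1(j)$, and since neither is empty the symmetric difference $D^1(i)\triangle D^1(j)$ is nonempty; picking a $k$ in it gives $d_{ji}=0$ or $d_{ij}=0$, and then applying \eqref{eq:der1} once more with a $k'\in D^1(i)$ (which exists, and makes $\omega_{ik'}\neq0$) or $k'\in D^1(j)$ kills the other one as well, giving \ref{PP2}. If instead $i\sim_{t_B}j$, then $D^1(i)=D^1(j)$, so for every $k\in D^1(i)$ both $\omega_{ik},\omega_{jk}$ are nonzero and the first case applies, giving \ref{PP1}; and for $k\notin D^1(i)=D^1(j)$ both coefficients vanish so \eqref{eq:der1} is automatic. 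Conversely, I would check that \ref{PP1}--\ref{PP3} imply \eqref{eq:der1} and \eqref{eq:der2}: \ref{PP3} is \eqref{eq:der2}, and for \eqref{eq:der1} one runs the same case analysis backward, using \ref{PP2} in the non-twin case (both entries zero) and \ref{PP1} together with the twin relation $D^1(i)=D^1(j)$ in the twin case (for $k\in D^1(i)$ it is \ref{PP1}, for $k\notin D^1(i)$ both $\omega$'s vanish).

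The main obstacle is the non-twin case of \ref{PP2}: one has to be careful that a single application of \eqref{eq:der1} only gives \emph{one} of $d_{ij}=0$ or $d_{ji}=0$, and a second, separate choice of index is needed to conclude the other, which is exactly where non-degeneracy (emptiness of $D^1$ would break the argument) and the distinctness $D^1(i)\neq D^1(j)$ enter. I would state this clearly rather than glossing over it. Everything else is a routine but careful case split on the support sets $D^1(i)$, $D^1(j)$, and I would organize the proof as: (1) reduce \eqref{eq:der2} to \ref{PP3}; (2) forward direction of \ref{PP1}, \ref{PP2} via the case analysis above; (3) converse direction, re-deriving \eqref{eq:der1} from \ref{PP1}--\ref{PP2}.
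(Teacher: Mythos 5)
Your proof is correct, and its converse direction (deriving \eqref{eq:der1} and \eqref{eq:der2} from conditions \ref{PP1}--\ref{PP3}) coincides with the paper's argument: restrict the sum in \eqref{eq:der2} to $D^1(i)$, and split \eqref{eq:der1} according to whether $i\sim_{t_B}j$ (using \ref{PP1} for $k\in D^1(i)$ and the vanishing of both structure constants for $k\notin D^1(i)=D^1(j)$) or not (using \ref{PP2}). Where you genuinely differ is the forward direction: the paper does not prove it at all, but simply cites \cite{YPMP} (Proposition 1 there), whereas you re-derive it from Tian's system by a case analysis on $D^1(i)$ and $D^1(j)$. That analysis is sound, including the delicate point you flag in \ref{PP2}: one choice of $k$ in the symmetric difference kills only one of $d_{ij},d_{ji}$, and a second index taken from the (nonempty, by non-degeneracy) descendent set of the other vertex is needed to kill the remaining entry; just note that after obtaining $d_{ji}=0$ from $k\in D^1(i)\setminus D^1(j)$ the second index must be taken in $D^1(j)$ (and symmetrically), which your wording leaves slightly loose but which is clearly what you intend. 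The payoff of your route is a self-contained proof that does not lean on the external reference; the cost is only a somewhat longer case split that, in substance, reproduces the content of the cited result.
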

\begin{proof}
If $d \in \Der(\A)$ then $d$ satisfies conditions \ref{PP1} to \ref{PP3} by \cite[Proposition 1]{YPMP}. Conversely, let  $d : \A \rightarrow  \A$ be a linear map satisfying conditions \ref{PP1} to \ref{PP3}. In order to prove that $d \in \Der(\A)$, it will be  necessary to  check that $d$ verifies (\ref{eq:der1}) and (\ref{eq:der2}). Let $i,j, k \in \Lambda$, $i\neq j$. If $i \sim_{t_B} j $, by \ref{PP1}, we have
$$\w_{jk}d_{ij}+ \w_{ik}d_{ji}=\w_{jk}d_{ij}+\w_{ik}\Big(-\frac{\w_{jk}}{ \w_{ik}}d_{ij}\Big)= 0, \quad \text{ for all }k\in D^1(i).
$$
Furthermore, if $k\not \in D^1(i)$ then $w_{jk}=w_{ik}=0$, which implies that $\w_{jk}d_{ij}+ \w_{ik}d_{ji}=0$. Otherwise if $i \not\sim_{t_B} j $, by \ref{PP2}, we have that $d_{ij}=d_{ji}=0$. Therefore $d$ satisfies  \eqref{eq:der1}.

Now note that for all $i,j \in \Lambda$ we have
$$\sum_{k=1} ^{n} \w_{ik}d_{kj}=\sum_{k \in D^1(i)} \w_{ik}d_{kj}=\left\{
\begin{array}{cl}
0,&\text{ if }j\notin D^1(i),\\[.2cm]
2\w_{ij}d_{ii},&\text{ if }j\in D^1(i).
\end{array}\right. =2\w_{ij}d_{ii}.$$
This proves that $d$ satisfies \eqref{eq:der2}.
\end{proof}

\begin{corollary}\label{blockmatrix}
Let $\A$ be a non-degenerate reducible evolution algebra with  $\A=I_1 \oplus \ldots \oplus I_t$ where $I_t$ is an ideal of $\A$ for every $t \in \{1,\ldots,m\}$. Then $d=(d_{ij})\in \Der{\A}$ is a block matrix. Moreover,  $d$ restricted to subspace $I_t$ (up to reordering) is a derivation over $I_t$ and its matrix is one of the blocks of $d=(d_{ij})$.
\end{corollary}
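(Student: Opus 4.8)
The plan is to exploit the block structure forced on $d$ by Proposition \ref{PP}, together with the fact that a decomposition $\A = I_1 \oplus \ldots \oplus I_t$ into ideals of a non-degenerate evolution algebra is compatible with a natural basis. First I would recall that each ideal $I_s$ of a non-degenerate evolution algebra with natural basis $B = \{e_i\}_{i \in \Lambda}$ is itself an evolution algebra admitting a natural basis consisting of a subset of $B$ (after a suitable reordering); this is standard and follows, for instance, from the description of ideals in \cite{Elduque/Labra/2015}. Thus there is a partition $\Lambda = \Lambda^{(1)} \sqcup \cdots \sqcup \Lambda^{(t)}$ with $I_s = \spa\{e_i : i \in \Lambda^{(s)}\}$, and for $i \in \Lambda^{(s)}$ we have $e_i^2 \in I_s$, i.e. $D^1(i) \subseteq \Lambda^{(s)}$.

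The key step is then to show that $d_{ij} = 0$ whenever $i$ and $j$ lie in different blocks $\Lambda^{(s)} \neq \Lambda^{(s')}$. For this I would use condition \ref{PP2} of Proposition \ref{PP}: it suffices to verify that indices from different blocks are never twins. Indeed, if $i \in \Lambda^{(s)}$ and $j \in \Lambda^{(s')}$ with $s \neq s'$, then $D^1(i) \subseteq \Lambda^{(s)}$ and $D^1(j) \subseteq \Lambda^{(s')}$, and since $\A$ is non-degenerate both $D^1(i)$ and $D^1(j)$ are nonempty; as $\Lambda^{(s)} \cap \Lambda^{(s')} = \emptyset$, we cannot have $D^1(i) = D^1(j)$, so $i \not\sim_{t_B} j$. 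Condition \ref{PP2} then gives $d_{ij} = d_{ji} = 0$. Hence, reordering $B$ so that the blocks $\Lambda^{(s)}$ occupy consecutive positions, the matrix $(d_{ij})$ is block diagonal with one block $d^{(s)} := (d_{ij})_{i,j \in \Lambda^{(s)}}$ for each ideal $I_s$.

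It remains to check that each block $d^{(s)}$ is a derivation of $I_s$. Since $d(I_s) \subseteq I_s$ (this is exactly the vanishing of the off-block entries just proved), the restriction $d|_{I_s} \colon I_s \to I_s$ is well defined and linear, and its matrix relative to $\{e_i : i \in \Lambda^{(s)}\}$ is precisely $d^{(s)}$. For $u, v \in I_s$ the Leibniz identity $d(uv) = d(u)v + u\,d(v)$ holds because it holds in $\A$, and all three terms lie in $I_s$; so $d|_{I_s} \in \Der(I_s)$. This proves the statement. (The indexing mismatch between ``$t$'' and ``$m$'' in the statement is immaterial, and one reads $t = m$.)

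The only mildly delicate point — and the one I would state carefully rather than wave at — is the claim that the ideal decomposition of a non-degenerate evolution algebra refines to a partition of a natural basis, so that $D^1(i)$ stays inside the block of $i$. Once that is in hand, everything else is a direct application of Proposition \ref{PP}\ref{PP2} plus the trivial observation that non-degeneracy makes each $D^1(i)$ nonempty, which is what prevents cross-block twinning. I do not expect any genuine obstacle beyond being precise about that compatibility; no serious computation is needed.
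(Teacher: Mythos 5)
Your proposal is correct and follows essentially the same route as the paper: the paper also reduces to a partition $B=B^1\cup\ldots\cup B^m$ with $I_t=\spa(\{e_i: e_i\in B^t\})$ (citing the decomposition theorem of Cabrera Casado--Siles Molina--Velasco, their Theorem 5.6, rather than Elduque--Labra, which is the precise reference you should use for that compatibility step), then kills the cross-block entries via Proposition \ref{PP}\ref{PP2} because indices in different blocks cannot be twins, and finally observes that the restriction to each ideal is a derivation. Your explicit non-degeneracy argument for the absence of cross-block twins is exactly the justification the paper leaves implicit, so no substantive difference remains.
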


\begin{proof}
Let $B$ be a natural basis of $\A$. By \cite[Theorem 5.6]{YMV} we know that the ideals $I_1, \ldots,I_m$ can be taken as evolution ideals. Concretely, by  \cite[Theorem 5.6]{YMV} we get a partition of $B=B^{1}\cup \ldots \cup B^{m}$ such that $I_t=\spa(\{e_i \colon e_i\in B^{t}\})$ or equivalently the structure matrix relative to $B$ is a block diagonal matrix. Since $i \not\sim_{t_B} j$ for $e_i \in B^k$ and $e_j \in B^\ell$ with $k \neq \ell$ then by Proposition  \ref{PP} \ref{PP2} we have that  $d_{ij}=d_{ji}=0$. It is easy to check that for every $t\in \{1,\ldots,m\}$, $d$ restricted to $I_t$ is a derivation and moreover, if $d^t$ is the matrix of $d\vert_{I_t}$ relative to the natural basis $B^t$ then $$d=\diag{(d^1,\ldots,d^m)}.$$

\end{proof}

\begin{corollary}\label{corol:prop}
Let $\A$ be a non-degenerate evolution algebra  with  a natural basis $B=\{e_i \}_{i \in \Lambda}$ and structure matrix $M_B=(\w_{ij})$. If  $d=(d_{ij}) \in \Der(\A) $ then

\begin{enumerate}[label=(\roman*)]
\item \label{corol:prop1} If $i,j \in \Lambda$, $i \not =j$ and $d_{ij}\neq 0$ then $e_j^2=\alpha_{ij} e_i^2$, for some $\alpha_{ij} \in \mathbb{K}^\times$.
\item \label{corol:prop2} If $i,j \in \Lambda$ and $j \in D^1(i)$ then $$\displaystyle \sum_{k\in \T(j)} \w_{ik} d_{kj}=2\w_{ij}d_{ii}.$$ 

\end{enumerate}
\end{corollary}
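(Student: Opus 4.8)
The plan is to derive both items directly from Proposition \ref{PP}, which gives necessary and sufficient conditions for $d$ to be a derivation. For item \ref{corol:prop1}, suppose $i \neq j$ and $d_{ij} \neq 0$. First I would invoke the contrapositive of Proposition \ref{PP}\ref{PP2}: if we had $i \not\sim_{t_B} j$ then $d_{ij} = 0$, contradicting our hypothesis. Hence $i \sim_{t_B} j$, i.e.\ $D^1(i) = D^1(j)$. Now, by Proposition \ref{PP}\ref{PP1}, for every $k \in D^1(i)$ we have $d_{ji} = -\frac{\w_{jk}}{\w_{ik}} d_{ij}$, which in particular shows that $\w_{ik} \neq 0 \iff \w_{jk} \neq 0$ (already known from $D^1(i) = D^1(j)$), but more is true: the ratio $\w_{jk}/\w_{ik}$ is the same for all $k \in D^1(i)$, namely $-d_{ji}/d_{ij}$ (a nonzero scalar, call it $\beta$, independent of $k$). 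Therefore $\w_{jk} = \beta \,\w_{ik}$ for all $k \in D^1(i) = D^1(j)$, and for $k \notin D^1(i)$ both sides vanish; so $e_j^2 = \sum_k \w_{jk} e_k = \beta \sum_k \w_{ik} e_k = \beta\, e_i^2$. Since $\A$ is non-degenerate, $e_i^2 \neq 0$, so $\beta \in \bK^\times$, and we set $\alpha_{ij} = \beta$.

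For item \ref{corol:prop2}, fix $i,j \in \Lambda$ with $j \in D^1(i)$. Proposition \ref{PP}\ref{PP3} gives
\[
\sum_{k \in D^1(i)} \w_{ik} d_{kj} = 2\w_{ij} d_{ii}.
\]
The task is to show the sum on the left reduces to a sum over the twin class $\T(j)$. I split $D^1(i)$ into $D^1(i) \cap \T(j)$ and $D^1(i) \setminus \T(j)$. For an index $k \in D^1(i)$ with $k \neq j$, if $d_{kj} \neq 0$ then by item \ref{corol:prop1} (applied with the roles of the two indices as $d_{kj}$, noting $d_{kj}\ne0$ forces $k\sim_{t_B}j$ via Proposition \ref{PP}\ref{PP2}) we get $k \sim_{t_B} j$, i.e.\ $k \in \T(j)$. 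Contrapositively, if $k \notin \T(j)$ then $d_{kj} = 0$ and that term contributes nothing. Hence
\[
2\w_{ij} d_{ii} = \sum_{k \in D^1(i)} \w_{ik} d_{kj} = \sum_{k \in D^1(i) \cap \T(j)} \w_{ik} d_{kj}.
\]
It remains to see that we may extend the sum to all of $\T(j)$, i.e.\ that every $k \in \T(j)$ already lies in $D^1(i)$ or contributes zero. But $j \in D^1(i)$ means $\w_{ij} \neq 0$, and since all elements of $\T(j)$ are twins of $j$ we have $D^1(k) = D^1(j)$ for $k\in\T(j)$; this does \emph{not} immediately say $k \in D^1(i)$. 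However, if $k \in \T(j) \setminus D^1(i)$ then $\w_{ik} = 0$, so $\w_{ik} d_{kj} = 0$ and the term may be freely added. Thus $\sum_{k \in \T(j)} \w_{ik} d_{kj} = \sum_{k \in D^1(i) \cap \T(j)} \w_{ik} d_{kj} = 2\w_{ij} d_{ii}$, as claimed.

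I expect the only subtle point to be bookkeeping about which indices belong to $D^1(i)$ versus $\T(j)$ in part \ref{corol:prop2} — specifically making sure that terms indexed by $\T(j) \setminus D^1(i)$ genuinely vanish (because $\w_{ik} = 0$ there) and that terms indexed by $D^1(i) \setminus \T(j)$ vanish (because $d_{kj} = 0$ there, by part \ref{corol:prop1}). Neither step involves a real obstacle; both are immediate consequences of Proposition \ref{PP} once the twin relation is tracked carefully. Part \ref{corol:prop1} is essentially just the observation that Proposition \ref{PP}\ref{PP1} forces a uniform proportionality constant between $e_i^2$ and $e_j^2$.
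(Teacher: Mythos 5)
Your proposal is correct and follows essentially the same route as the paper: both items are deduced directly from Proposition \ref{PP}, with (i) coming from the constancy of the ratio $\w_{jk}/\w_{ik}=-d_{ji}/d_{ij}$ over $k\in D^1(i)$ and (ii) from discarding the terms with $k\notin \T(j)$ (where $d_{kj}=0$) and freely adding those with $k\in\T(j)\setminus D^1(i)$ (where $\w_{ik}=0$). The only cosmetic difference is that you obtain $i\sim_{t_B}j$ from Proposition \ref{PP}\ref{PP2} instead of citing an external lemma, and you avoid the paper's separate $|D^1(i)|=1$ case, which changes nothing of substance.
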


\begin{proof} In order to prove \ref{corol:prop1} consider $i,j \in \Lambda$ such that $i \not =j$ and $d_{ij}\neq 0$. By \cite[Lemma 4]{YPMP} we have that  $i\sim_{t_B}j$. If $|D^{1}(i)|=1$ the proof is straightforward. In other case, by Proposition \ref{PP} \ref{PP1}, we have that
$$ d_{ji}=-\frac{\w_{jk}}{\w_{ik}}d_{ij}=-\frac{\w_{j\ell}}{\w_{i\ell}}d_{ij}, \text{ for all } k, \ell \in D^1(i).$$
Therefore $\frac{\w_{jk}}{\w_{ik}}=\frac{\w_{j\ell}}{\w_{i\ell}}.$ Fix $\ell \in D^1(i)$. Then
$$e_j^2=\sum_{k \in D^1(i)} \w_{jk}e_k=\sum_{k \in D^1(i)} \frac{\w_{j\ell}}{\w_{i\ell}}\w_{ik}e_k=\frac{\w_{j\ell}}{\w_{i\ell}}\sum_{k \in D^1(i)} \w_{ik}e_k=\frac{\w_{j\ell}}{\w_{i\ell}}e_i^2.$$
Taking $\alpha_{ij} :=\frac{\w_{j\ell}}{\w_{i\ell}}$, we have that $e_j^2=\alpha_{ij} e_i^2$, as required. For item \ref{corol:prop2}, we have that $d_{kj}=0$ for all $k \not\in \T(j)$ by Proposition \ref{PP} \ref{PP2}. Then, using Proposition \ref{PP} \ref{PP3}, we get 
$$2\w_{ij}d_{ii}=\sum_{k\in D^1(i)} \w_{ik}d_{kj}=\sum_{k\in \T(j)} \w_{ik}d_{kj}.$$
\end{proof}

\begin{corollary}\label{blockder}
Let $\A$ be a non-degenerate evolution algebra. Then $d=(d_{ij}) \in \Der(\A)$ can be written as a block matrix.
\end{corollary}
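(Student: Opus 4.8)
The plan is to reduce this to the reducible case handled in Corollary \ref{blockmatrix} together with the twin-partition structure exposed in Proposition \ref{PP}. First I would recall that since $\A$ is non-degenerate, $\ann(\A)=0$, so in the natural decomposition $B=B_0\cup B_1\cup\cdots\cup B_r$ of Remark \ref{decom} the piece $B_0$ is empty, i.e. $\Lambda=\Lambda_1\cup\cdots\cup\Lambda_r$. The key combinatorial point is that the twin relation $\sim_{t_B}$ refines this decomposition in the following weak sense: by Corollary \ref{corol:prop}\ref{corol:prop1}, if $d_{ij}\neq 0$ with $i\neq j$ then $e_j^2=\alpha_{ij}e_i^2$, so $i$ and $j$ lie in the same part $\Lambda_t$ of the natural decomposition; moreover by Proposition \ref{PP}\ref{PP2} the off-diagonal entry $d_{ij}$ can only be nonzero when $i\sim_{t_B}j$, and twin indices always have the same first-generation descendents and hence (being non-degenerate) the same nonempty $D^1$, so $e_i^2$ and $e_j^2$ are proportional — twin classes sit inside the parts $\Lambda_t$.

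Next I would order the basis $B$ so that the indices of each twin class are contiguous, and moreover so that twin classes belonging to the same $\Lambda_t$ are grouped together (this is the ``up to reordering'' in the statement). With this ordering, Proposition \ref{PP}\ref{PP2} immediately forces $d_{ij}=0=d_{ji}$ whenever $i$ and $j$ lie in different twin classes \emph{except} possibly when those two classes share the proportionality relation of \ref{corol:prop1}; but \ref{corol:prop1} only permits $d_{ij}\neq 0$ within a single $\Lambda_t$. Hence every nonzero off-diagonal entry of $d$ lies in a diagonal block indexed by (the union of twin classes inside) a single $\Lambda_t$. The diagonal entries $d_{ii}$ contribute only to their own block. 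Therefore $d=\diag(d^{(1)},\dots,d^{(r)})$ where $d^{(t)}$ is the submatrix on the index set $\Lambda_t$, which is the desired block form.

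The step I expect to require the most care is the bookkeeping that links the three a priori different partitions of $\Lambda$ in play — the twin partition $\Pi_B(\Lambda)$, the natural decomposition $\Lambda=\Lambda_1\cup\cdots\cup\Lambda_r$, and the block structure of $d$ — and verifying that a \emph{single} simultaneous reordering of $B$ makes all relevant index sets contiguous. Concretely one must check that ``$i\sim_{t_B}j \implies e_i^2,e_j^2$ proportional'' (so twin classes refine the $\Lambda_t$), which uses non-degeneracy in an essential way: $D^1(i)=D^1(j)\neq\emptyset$ gives $\supp_B(e_i^2)=\supp_B(e_j^2)$, and then Proposition \ref{PP}\ref{PP1} pins down the ratios $\w_{jk}/\w_{ik}$ to a common value, exactly as in the proof of Corollary \ref{corol:prop}\ref{corol:prop1}. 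Once this refinement is in place the block decomposition is a formal consequence of Proposition \ref{PP}\ref{PP2} and Corollary \ref{corol:prop}\ref{corol:prop1}, with no further computation needed.
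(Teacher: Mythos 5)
Your proposal is correct and follows essentially the same route as the paper: the paper's proof is exactly the observation that, by Corollary \ref{corol:prop} \ref{corol:prop1}, $d_{ij}=d_{ji}=0$ whenever $e_i\in B_t$ and $e_j\in B_s$ with $t\neq s$ in the natural decomposition, so after reordering $d$ is block diagonal with blocks indexed by the $B_t$. One caveat: the side claim you single out as the delicate step, namely that $i\sim_{t_B}j$ forces $e_i^2$ and $e_j^2$ to be proportional (so that twin classes refine the $\Lambda_t$), is false in general --- equality of supports $D^1(i)=D^1(j)$ does not fix the ratios $\w_{jk}/\w_{ik}$, and Proposition \ref{PP} \ref{PP1} only pins them to a common value when $d_{ij}\neq 0$, which is precisely the hypothesis of Corollary \ref{corol:prop} \ref{corol:prop1}. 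Fortunately this claim is superfluous: your argument already rests on Corollary \ref{corol:prop} \ref{corol:prop1} alone, and the twin-partition bookkeeping can simply be dropped.
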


\begin{proof}
We consider a natural decomposition $B=B_0\cup \ldots \cup B_r$. We know by Corollary \ref{corol:prop} \ref{corol:prop1} that $d_{ij}=d_{ji}=0$ for every $e_i \in B_t$ and $e_j \in B_s$ with $t\neq s$.

\end{proof}

Observe that $\spa(B_t)$ for $t\neq 0$ is not an ideal in general, therefore $d$ restricted to subspace $\spa(B_t)$ is not necessarily a derivation.

\begin{prop}\label{prop34}
Let $\A$ be a non-degenerate evolution algebra with $\Der(\A)\neq 0$. Then $\A$ does not have an unique natural basis.
\end{prop}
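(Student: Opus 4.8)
The plan is to show that a nonzero derivation gives rise to a one‑parameter family of automorphisms via the exponential, and that for at least one value of the parameter the resulting automorphism sends the natural basis $B$ to a genuinely new natural basis, i.e.\ one not obtainable from $B$ by a permutation composed with scalar multiplications. If that is accomplished, then the subgroup of $\aut_{\bK}(\A)$ mapping natural bases to natural bases strictly contains $S_n \rtimes (\bK^{\times})^n$, so $\A$ does not have a unique natural basis in the sense of the paper.

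First I would fix a nonzero $d \in \Der(\A)$. By Corollary \ref{blockder} (and Corollary \ref{corol:prop}\ref{corol:prop1}) the off‑diagonal nonzero entries $d_{ij}$ with $i \ne j$ only occur when $e_j^2 = \alpha_{ij} e_i^2$ for some scalar, so in particular $i \sim_{t_B} j$; in other words $d$ is block diagonal along the twin classes, possibly with a nonzero diagonal. I would split into two cases. \textbf{Case 1: $d$ is not diagonal.} Then some $d_{ij} \ne 0$ with $i \ne j$, and $i,j$ lie in the same $B_t$. Consider the linear map $\varphi_{\lambda} := \exp(\lambda d) = \sum_{m \ge 0} \lambda^m d^m/m!$ for $\lambda \in \bK$; one should first note that since $d$ restricted to each block is essentially skew‑symmetric (cf.\ the discussion around Corollary \ref{dimen1}) — or, more carefully, nilpotent plus semisimple — the exponential makes sense formally, but over a field of characteristic zero the cleanest route is to observe that $d$ restricted to $\spa\{e_i, e_j, \dots\}$ within the block has bounded powers acting on the relevant finite‑dimensional span, so $\exp(\lambda d)$ is a well‑defined polynomial‑in‑$\lambda$ (or convergent, if one prefers to stay with formal manipulation) automorphism of $\A$ for each $\lambda$. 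Then $B_{\lambda} := \varphi_{\lambda}(B)$ is a natural basis because automorphisms preserve the defining relations $e_ie_j = 0$. For small $\lambda \ne 0$, $\varphi_{\lambda}(e_j) = e_j + \lambda d(e_j) + O(\lambda^2)$ has $e_i$‑coordinate $\lambda d_{ij} + O(\lambda^2) \ne 0$ while still having $e_j$‑coordinate $1 + O(\lambda^2) \ne 0$, so $\varphi_{\lambda}(e_j)$ is supported on at least two basis vectors and cannot be a scalar multiple of any $e_k$; hence $B_{\lambda}$ is not obtained from $B$ by a permutation together with scalings, so $\A$ has more than one natural basis.

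\textbf{Case 2: $d$ is a nonzero diagonal matrix}, say $d(e_i) = d_{ii} e_i$ with some $d_{ii} \ne 0$. Here the exponential automorphism $\varphi_{\lambda}(e_i) = e^{\lambda d_{ii}} e_i$ is itself just a rescaling, so it does not directly produce a new basis. Instead I would use the derivation equations. Condition \eqref{eq:der2} with $j = i$ (or Proposition \ref{PP}\ref{PP3} at $j \in D^1(i)$) forces $\sum_{k \in D^1(i)} \w_{ik} d_{ki} = 2\w_{ii} d_{ii}$; since $d$ is diagonal the left side is $\w_{ii} d_{ii}$, giving $\w_{ii} d_{ii} = 2 \w_{ii} d_{ii}$, hence $\w_{ii} d_{ii} = 0$, so $\w_{ii} = 0$ for every index $i$ with $d_{ii} \ne 0$; and \eqref{eq:der2} with $j \ne i$, $j \in D^1(i)$ gives $\w_{ij} d_{jj} = 2 \w_{ij} d_{ii}$, i.e.\ $d_{jj} = 2 d_{ii}$ whenever $\w_{ij} \ne 0$. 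I would argue that these constraints, together with the fact that $\A$ is non‑degenerate (so $D^1(i) \ne \emptyset$ for all $i$) and $\bK$ has characteristic zero, are enough to locate two distinct indices $i,j$ with $e_j^2$ proportional to $e_i^2$ on which $d$ acts nontrivially — essentially, the "doubling" relation $d_{jj} = 2d_{ii}$ propagated along descendants cannot be consistent on a finite index set unless one can instead build a non‑diagonal deformation. A cleaner tactic for this case: pick $i$ with $d_{ii} \ne 0$; then for $j \in D^1(i)$ the linear map $\psi$ defined on $B$ by $\psi(e_i) = e_i + t\, e_j$ (and $\psi = \mathrm{id}$ elsewhere) is, after checking the natural‑basis relations are preserved for suitable $t$ using that the relevant structure constants vanish, a new natural basis that is visibly not a permutation‑plus‑scaling of $B$; here $j$ is chosen among the twin class of $i$, which is nonempty precisely because the diagonal derivation forces enough degeneracy. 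The main obstacle I anticipate is exactly Case 2: showing that a nonzero \emph{diagonal} derivation still forces the existence of a second natural basis, since the naive exponential trick collapses to a rescaling; the argument must instead extract, from the structural constraints \eqref{eq:der1}–\eqref{eq:der2} on a nonzero diagonal $d$ over a characteristic‑zero field, either a pair of proportional squares $e_i^2, e_j^2$ permitting a triangular change of basis, or directly a contradiction reducing to Case 1. I would carry out Case 1 first (it is essentially immediate from Corollary \ref{blockder} plus the exponential map), then handle Case 2 by the structural analysis above, invoking non‑degeneracy and $\mathrm{char}(\bK) = 0$ at the key step.
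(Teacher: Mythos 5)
Your Case 1 is repairable but not as written, and Case 2 — which you yourself flag as the main obstacle — is left open, so the proof is incomplete. Concretely, in Case 1 the exponential $\exp(\lambda d)$ is not available over an arbitrary field of characteristic zero: $d$ need not be nilpotent (a nonzero skew-symmetric block over $\mathbb{Q}$ already is not), so the series is neither a polynomial in $\lambda$ nor convergent, and your appeal to ``bounded powers'' on the relevant span has no justification; truncating $d$ to a subspace does not yield a derivation, so that does not help either. The fix is the route the paper takes: if $d_{ij}\neq 0$ with $i\neq j$, then Corollary \ref{corol:prop} \ref{corol:prop1} gives $e_j^2=\alpha_{ij}e_i^2$ with $\alpha_{ij}\in\mathbb{K}^{\times}$, so $\A$ fails Property (2LI), and non-uniqueness of the natural basis is then exactly \cite[Corollary 2.7]{boudi20} (or, if you want an explicit basis, the change $e_i\mapsto e_i+\gamma e_j$, $e_j\mapsto -\gamma\alpha_{ij}e_i+e_j$ used in the proof of Theorem \ref{theo:notinvariance_of_loops}, which is natural precisely because the two squares are proportional). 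No exponential is needed.

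Case 2 is the genuine gap, and the constructions you sketch there do not work. The correct resolution is that Case 2 is vacuous: a nonzero derivation of a non-degenerate evolution algebra cannot have all off-diagonal entries zero — this is \cite[Lemma 1]{YPMP}, the very fact the paper's short proof rests on. You already have the germ of the argument: for diagonal $d$, equation \eqref{eq:der2} gives $d_{jj}=2d_{ii}$ whenever $j\in D^1(i)$; non-degeneracy gives $D^1(i)\neq\emptyset$ for every $i$, so from any $i_0$ you can build a chain $i_0,i_1,\dots$ with $i_{k+1}\in D^1(i_k)$ and $d_{i_ki_k}=2^{k}d_{i_0i_0}$, and finiteness of $\Lambda$ forces a repetition $i_k=i_m$, whence $(2^{k-m}-1)2^{m}d_{i_0i_0}=0$ and $d_{i_0i_0}=0$ in characteristic zero; thus $d=0$, contradicting $d\neq 0$. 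Instead of carrying this out, you pivot to two unsupported claims: that the map $e_i\mapsto e_i+te_j$ with $j\in D^1(i)$ can be completed to a natural basis (it cannot in general, since $(e_i+te_j)e_j=te_j^2\neq 0$ by non-degeneracy, unless $e_i^2$ and $e_j^2$ are proportional, which is exactly what you have not established), and that a nonzero diagonal derivation forces the twin class of $i$ to be nontrivial (diagonal derivations impose no such degeneracy). So as it stands the argument establishes the proposition only under the extra hypothesis that $d$ has a nonzero off-diagonal entry, and even there only modulo the unjustified exponential; the paper closes both points with \cite[Lemma 1]{YPMP}, Corollary \ref{corol:prop} \ref{corol:prop1} and \cite[Corollary 2.7]{boudi20}.
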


\begin{proof} Let $B$ be a natural basis of $\A$  and let $d\in \Der(\A)$, with $d\neq 0$. By \cite[Lemma 1]{YPMP} there are $i,j \in \Lambda$ such that $i\neq j$ and $d_{ij} \neq 0$.  Therefore by Corollary \ref{corol:prop} \ref{corol:prop1} we have that $e_j^2=\alpha_{ij} e_i^2$, for some $\alpha_{ij} \in \mathbb{K}^\times$. Thus $\A$ has not the  Property \rm{(2LI)} and by \cite[Corollary 2.7]{boudi20} we get that $\A$ has not an unique natural basis.
\end{proof}

\begin{remark} \label{2li} \rm
Note that the Proposition \ref{prop34} is equivalent to say that if $\A$ has Property \rm{(2LI)} then  $\Der(\A)=0$. Since all perfect evolution algebras and evolution algebras which are twin-free both have the  Property \rm{(2LI)}, the Proposition \ref{prop34}  provides a generalization of the \cite[Theorem 2.1]{COT}, \cite[Theorem 4.1 item (1)]{Elduque/Labra/2019} and \cite[Theorem 1]{YPMP}. 

\end{remark}

However, the converse of Proposition \ref{prop34} is not true as shown the following example.
\begin{exa} \rm
Let $\A$ a non-degenerate two dimensional evolution algebra with product $e_1^2=e_2^2=e_1+e_2$. As $\A$ does not have Property \rm{(2LI)}, then does not have an unique natural basis. However, it is easy to check that $\Der(\A)=0$.
\end{exa}

\begin{prop} \label{dii}
Let $\bK$ be an arbitrary field and let $\A$ be a non-degenerate evolution $\bK$-algebra with $\dim(\A^2)=1$. Consider $B=\{e_i\}_{i \in \Lambda}$ a natural basis  and $M_B=(\omega_{ij})$ the structure matrix. For $i\in \Lambda$, let $\alpha_{1i}$ be no null scalars such that $e_i^2=\alpha_{1i} e_1^2$. Suppose that $e_1^2e_1^2\neq 0$. Then $d\in \Der (\A)$ if and only if it verifies the following conditions:

\begin{enumerate}[label=(\roman*)]
       \item $d_{ii}=0$ for any $i \in \Lambda$. \vspace{0.2cm}
    \item $d_{ij}= - \dfrac{\alpha_{1i}}{\alpha_{1j}} d_{ji}$ for any $i,j \in \Lambda$, $i \neq j$.\vspace{0.1cm}
    \item $\sum_{j\in \Lambda}\omega_{1j}d_{jk}=0$ for $k \in \Lambda$.
\end{enumerate}
 
\end{prop}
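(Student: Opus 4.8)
The plan is to derive everything from the characterization in Proposition \ref{PP}, specialized to the hypothesis $\dim(\A^2)=1$. Since $\A^2=\spa(\{e_1^2\})$ and $\A$ is non-degenerate, every $e_i^2$ is a nonzero multiple $\alpha_{1i}e_1^2$ of $e_1^2$; in particular $D^1(i)=D^1(1)$ for every $i$, so all indices are pairwise twins relative to $B$ and $\omega_{ik}=\alpha_{1i}\omega_{1k}$ for all $i,k\in\Lambda$. This single identity is the engine of the whole proof: it lets me rewrite condition \ref{PP1} and condition \ref{PP3} of Proposition \ref{PP} entirely in terms of the $\alpha_{1i}$ and the first row $(\omega_{1k})_{k}$.

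First I would prove necessity. Assume $d\in\Der(\A)$. Condition \ref{PP1} applied with this $i,j$ (which are twins) gives, for any $k\in D^1(i)=D^1(1)$,
\[
d_{ji}=-\frac{\omega_{jk}}{\omega_{ik}}d_{ij}=-\frac{\alpha_{1j}\omega_{1k}}{\alpha_{1i}\omega_{1k}}d_{ij}=-\frac{\alpha_{1j}}{\alpha_{1i}}d_{ij},
\]
which rearranges to $d_{ij}=-\frac{\alpha_{1i}}{\alpha_{1j}}d_{ji}$, item (ii). For item (i), I use the extra hypothesis $e_1^2e_1^2\neq 0$. Since $e_1^2=\sum_{k\in D^1(1)}\omega_{1k}e_k$, we get $e_1^2e_1^2=\sum_{k\in D^1(1)}\omega_{1k}^2e_k^2=\big(\sum_{k\in D^1(1)}\omega_{1k}^2\alpha_{1k}\big)e_1^2$, so the scalar $c:=\sum_{k\in D^1(1)}\omega_{1k}^2\alpha_{1k}$ is nonzero. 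Now take $j=i$ in condition \ref{corol:prop2} (or directly in \ref{PP3}): for $i\in D^1(1)$ (and such $i$ exists, in fact reindexing we may assume $1\in D^1(1)$ or argue for general $i$),
\[
2\omega_{ii}d_{ii}=\sum_{k\in D^1(i)}\omega_{ik}d_{ki}.
\]
The cleaner route: apply \ref{PP3} with $j$ replaced by the running index and sum against $\omega_{1i}$ after multiplying appropriately — concretely, for each $i$ with $i\in D^1(1)$, \ref{PP3} gives $\sum_{k\in D^1(1)}\omega_{ik}d_{ki}=2\omega_{ii}d_{ii}$, i.e. $\alpha_{1i}\sum_{k}\omega_{1k}d_{ki}=2\alpha_{1i}\omega_{1i}d_{ii}$, hence $\sum_{k}\omega_{1k}d_{ki}=2\omega_{1i}d_{ii}$. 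Multiplying by $\omega_{1i}$ and summing over $i\in D^1(1)$, the left side becomes $\sum_{i}\omega_{1i}\sum_k\omega_{1k}d_{ki}=\sum_k\omega_{1k}\big(\sum_i\omega_{1i}d_{ki}\big)=0$ by the $j\notin D^1(i)$... — here I must be careful; the correct pairing is to observe $\sum_i \omega_{1i}\,\omega_{1k}d_{ki}$ and use skew-symmetry $d_{ki}=-\frac{\alpha_{1k}}{\alpha_{1i}}d_{ik}$ to see $\sum_{i,k}\omega_{1i}\omega_{1k}\alpha_{1?}$... The honest statement: after substituting (ii), the double sum $\sum_{i,k\in D^1(1)}\omega_{1i}\omega_{1k}d_{ki}$ is antisymmetric in a weighted sense that forces it to vanish once combined with item (iii), leaving $2c\,d_{ii}=0$ for each $i$, hence $d_{ii}=0$. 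Condition (iii), $\sum_{j}\omega_{1j}d_{jk}=0$, is exactly condition \ref{PP3} for $i=1$: the case $j=k\notin D^1(1)$ (vanishing) together with $j=k\in D^1(1)$ giving $2\omega_{1k}d_{11}=0$ once we know $d_{11}=0$ — so it is the $i=1$ instance of \ref{PP3} after installing $d_{11}=0$.

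For sufficiency, assume $d$ satisfies (i)--(iii); I must recover \ref{PP1}--\ref{PP3} of Proposition \ref{PP}. Item \ref{PP2} is vacuous since all indices are twins. Item \ref{PP1} follows from (ii) by reversing the computation above, using $\omega_{jk}/\omega_{ik}=\alpha_{1j}/\alpha_{1i}$ for $k\in D^1(i)=D^1(1)$. For \ref{PP3}, fix $i$; since $e_i^2=\alpha_{1i}e_1^2$ we have $\omega_{ik}=\alpha_{1i}\omega_{1k}$, so $\sum_{k\in D^1(i)}\omega_{ik}d_{kj}=\alpha_{1i}\sum_{k\in D^1(1)}\omega_{1k}d_{kj}=\alpha_{1i}\sum_{k\in\Lambda}\omega_{1k}d_{kj}=0$ by (iii); on the other hand, if $j\in D^1(i)=D^1(1)$ the required value is $2\omega_{ij}d_{ii}=0$ by (i), and if $j\notin D^1(i)$ the required value is $0$; in both cases it matches the computed $0$. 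Hence $d\in\Der(\A)$.

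The main obstacle is the necessity of item (i): extracting $d_{ii}=0$ from \ref{PP3} requires combining the $i$-indexed equations, substituting the skew-symmetry relation (ii), and recognizing that the surviving double sum cancels against condition (iii)/the $i=1$ case, with the factor $c=\sum_{k\in D^1(1)}\omega_{1k}^2\alpha_{1k}\neq 0$ (which is precisely where the hypothesis $e_1^2e_1^2\neq 0$ is used) being invertible so it can be cancelled. Everything else is a direct translation through the relation $\omega_{ik}=\alpha_{1i}\omega_{1k}$, and I would present that translation once at the start and then keep reusing it.
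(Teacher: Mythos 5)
Your sufficiency direction and the necessity of items (ii) and (iii) are correct and essentially parallel the paper: the paper obtains (ii) from $d(e_ie_j)=0$ and (iii) from Equation \eqref{eq:der2} with $i=1$, while you route the same computations through Proposition \ref{PP} via the identity $\omega_{ik}=\alpha_{1i}\omega_{1k}$; that difference is cosmetic. The genuine gap is the necessity of item (i), which is exactly the step where the hypothesis $e_1^2e_1^2\neq 0$ must do its work. As written, your argument there is not a proof: your first evaluation of the double sum (``the left side becomes \dots $=0$'') is incorrect and you retract it mid-sentence; the replacement is only the assertion that the sum is ``antisymmetric in a weighted sense that forces it to vanish once combined with item (iii)'', and that appeal is circular, since you justify (iii) only \emph{after} installing $d_{11}=0$, i.e.\ from (i). Moreover, your intended endpoint $2c\,d_{ii}=0\Rightarrow d_{ii}=0$ silently inverts $2$, although the proposition is stated over an arbitrary field; the paper's proof explicitly separates characteristic $2$ (showing first that $d(e_1^2)=0$ there), whereas your route never addresses characteristic and also leans on Proposition \ref{PP}, stated under the standing characteristic-zero convention.

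For comparison, the paper needs no double sum at all: writing $e_1^2e_1^2=\gamma e_1^2$ with $\gamma\neq 0$ and applying $d$ gives $\gamma d(e_1^2)=2e_1^2d(e_1^2)$; combined with $d(e_1^2)=2d_{11}e_1^2$ this yields $d_{11}=0$ when ${\rm char}(\bK)\neq 2$ and $d(e_1^2)=0$ in general, and then $d(e_i^2)=\alpha_{1i}d(e_1^2)=0$ together with $d(e_i^2)=2d_{ii}\alpha_{1i}e_1^2$ handles every diagonal entry. If you prefer to stay inside your framework, the cancellation can be made precise at least in characteristic $\neq 2$: for every $i\in\Lambda$ and every $j\in D^1(1)$, Proposition \ref{PP} \ref{PP3} gives, after dividing by $\alpha_{1i}$, the relation $\sum_{k\in D^1(1)}\omega_{1k}d_{kj}=2\omega_{1j}d_{ii}$; multiplying by $\alpha_{1j}\omega_{1j}$, summing over $j\in D^1(1)$, and cancelling the off-diagonal terms in pairs by means of (ii) leaves $\sum_{j\in D^1(1)}\alpha_{1j}\omega_{1j}^2d_{jj}=2c\,d_{ii}$ with $c=\sum_{k\in D^1(1)}\alpha_{1k}\omega_{1k}^2\neq 0$; since the left-hand side does not depend on $i$ (and $\omega_{1j}\neq 0$), all $d_{ii}$ coincide, and substituting the common value back forces it to be $0$. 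That explicit computation (or the paper's shortcut) is what your write-up needed to contain; as submitted, item (i) is asserted rather than proved.
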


\begin{proof}
Firstly, observe that  $\A^2=\bbK e_1^2$. We write $e_1^2 e_1^2= \gamma e_1^2$ for some $\gamma \in \bbK^{\times}$. Let $d \in \Der(\A)$. We get that $d(e_i^2)=2e_id(e_i)=2d_{ii}e_i^2$ for any $i \in \Lambda$. On the other hand, we have that $d(e_1^2 e_1^2)=2e_1^2d(e_1^2) $, so $\gamma d(e_1^2)=2e_1^2 d(e_1^2)$. Therefore, if  ${\rm char}(\bbK) =2$, then $d(e_1^2)=0$. If  ${\rm char}(\bbK) \not =2$, we get that $\gamma d_{11}e_1^2 =2d_{11}e_1^2 e_1^2$. This implies that  $\gamma d_{11}e_1^2 =2d_{11} \gamma e_1^2$. So, as $\gamma \neq 0$ and $\A$ is non-degenerate, then $d_{11}=0$. We conclude that $d(e_1^2)=0$ in any case and therefore $d(e_i^2)=0$ for any $i \in \Lambda$. Then $d_{ii}=0$ because $d(e_i^2)=2d_{ii}\alpha_{1i} e_1^2$. 
 Let $i \neq j$ with $i,j \in \Lambda$, then $d(e_i e_j)=0$, which implies that $d_{ij}e_j^2+d_{ji}e_i^2=0$. So $e_1^2(d_{ij}\alpha_{1j}+d_{ji}\alpha_{1i})=0$ and by non-degeneracy of $\A$ $d_{ij}=-\frac{\alpha_{1i}}{\alpha_{1j}}d_{ji}$. Finally, as $d(e_1^2)=2d(e_1)e_1$ by \eqref{eq:der2} we have that  $\sum_{j}\omega_{1j}d_{jk}=2d_{11}\omega_{1k}$ for every $k \in \Lambda$. The converse is straightforward. 
\end{proof}

\begin{corollary}\label{dimen1}
Let $\A$ be a non-degenerate evolution algebra such that $\dim(\A^2)=1$ and  natural basis $B=\{e_i\}_{i \in \Lambda}$. Suppose that $e_i^2=e_1^2$  for any $i \in \Lambda$ and $e_1^2e_1^2 \neq 0$.  Then if $d\in \Der (\A)$, the matrix of $d$ relative to $B$ is skew-symmetric, up to reordering.
\end{corollary}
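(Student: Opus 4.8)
The plan is to derive this as a direct consequence of Proposition \ref{dii}. Since $e_i^2 = e_1^2$ for every $i \in \Lambda$, the scalars $\alpha_{1i}$ from Proposition \ref{dii} all equal $1$, so condition (ii) of that proposition reads $d_{ij} = -d_{ji}$ for all $i \neq j$, while condition (i) gives $d_{ii} = 0$. Together these say precisely that the matrix $(d_{ij})$ is skew-symmetric. So at first glance no reordering even seems necessary.

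The subtlety I would address is the hypothesis of Proposition \ref{dii}: it requires $e_1^2 e_1^2 \neq 0$, and here we are given $e_1^2 e_1^2 \neq 0$ directly, so that is fine. However, Proposition \ref{dii} is stated with a \emph{fixed} natural basis $B$ and its hypotheses refer to that basis; the phrase ``up to reordering'' in the corollary is presumably there because one may wish to first reorder $B$ so that a convenient index plays the role of ``$1$'', but since the hypothesis $e_i^2 = e_1^2$ is symmetric in all indices, any index works equally well and reordering changes nothing essential. I would simply note that applying Proposition \ref{dii} verbatim with this basis already yields that $d = (d_{ij})$ is skew-symmetric, hence a fortiori skew-symmetric up to reordering.

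Concretely, the proof I would write: Let $d \in \Der(\A)$. By hypothesis $e_i^2 = e_1^2$ for all $i \in \Lambda$, so the scalars $\alpha_{1i}$ of Proposition \ref{dii} satisfy $\alpha_{1i} = 1$ for all $i$. Since $e_1^2 e_1^2 \neq 0$, Proposition \ref{dii} applies and gives $d_{ii} = 0$ for all $i \in \Lambda$ and $d_{ij} = -(\alpha_{1i}/\alpha_{1j}) d_{ji} = -d_{ji}$ for all $i \neq j$. Hence the matrix of $d$ relative to $B$ satisfies $d_{ij} = -d_{ji}$ for all $i,j \in \Lambda$, i.e.\ it is skew-symmetric.

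I do not anticipate any real obstacle here; the only thing to be careful about is making sure the hypotheses of Proposition \ref{dii} are genuinely met (non-degeneracy, $\dim(\A^2) = 1$, and $e_1^2 e_1^2 \neq 0$ — all of which are assumed in the corollary) and correctly tracking that the $\alpha_{1i}$ collapse to $1$ under the stronger hypothesis $e_i^2 = e_1^2$. The ``up to reordering'' clause is harmless and can be dispensed with by the symmetry of the hypothesis in the indices, though I would keep the phrasing to match how the result will be invoked later (e.g.\ in Theorem \ref{teo:dervolt}), where one first permutes the basis into the natural-decomposition order.
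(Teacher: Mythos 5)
Your proof is correct and is essentially the paper's own argument: the corollary is stated as an immediate consequence of Proposition \ref{dii}, with the hypothesis $e_i^2=e_1^2$ forcing all $\alpha_{1i}=1$ so that conditions (i) and (ii) give $d_{ii}=0$ and $d_{ij}=-d_{ji}$, i.e.\ skew-symmetry. Your observation that the ``up to reordering'' clause is not actually needed under this symmetric hypothesis is also accurate.
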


\begin{remark} \rm
The converse of Corollary \ref{dimen1} is not true in general. Indeed, if we consider the $3$-dimensional evolution algebra $\A$ with basis $\{e_i\}$ and product $e_i^2=e_1$ for any $i=\{1,2,3\}$ then $d \in \Der(\A)$ if and only if $d_{ii}=0$ for any $i$, $d_{12}=d_{21}=d_{13}=d_{31}=0$ and $d_{32}=-d_{23}$.
\end{remark}

The condition $e_1^2 e_1^2\neq 0$ can not be eliminated of the Proposition \ref{dii} as the following remark shows.

\begin{remark} \rm
Let $\A$ be a non-degenerate evolution algebra  with $\dim(\A^2)=1$, natural basis $B=\{e_i \}_{i \in \Lambda}$ and structure matrix $M_B=(\w_{ij})$. We can write $e_i^2=\alpha_{1i}e_1^2$ for every $i \in \Lambda$. Suppose that $e_1^2 e_1^2=0$. Since $e_1^2\neq 0$ there exists $k$ such that $\omega_{1k} \neq 0$. Now, we will find a derivation such that $d_{ii} \neq 0$ for every $i \in \Lambda$. Indeed, it is enough to consider the derivation $d \in \Der(\A)$ defined by 
$$d_{ij}=\left\{ \begin{array}{lcl}
             1, &   if  &  i =j,\\[0.1cm]
              -\dfrac{\alpha_{1i}\omega_{1i}}{\alpha_{1k}\omega_{1k}}, &  if &  j=k \, \,\,  {\rm and}  \, \, \,  i \in \Lambda \setminus \{k\},\\[0.1cm]
             0, &  if  & i \neq j \, \,\,{\rm and} \, \,\, i,j \neq k,\\[0.1cm]
              \dfrac{\omega_{1j}}{\omega_{1k}}, &  if &  i \neq j \, \,\,  {\rm and}  \, \, \,  i =k.\\
             \end{array}
   \right.
   $$

\end{remark}

\begin{prop}\label{diago1}
Let $\A$ be an evolution algebra with  $\{e_i\}_{i\in \Lambda}$  natural basis and $d=(d_{ij})\in \Der(\A)$. Let $\{e_i^2\}_{i=1}^\ell$ be a basis of $\A^2$ and  $e_{j}^2=\sum_{k=1}^\ell \beta_{kj} e_k^2$. If $\beta_{kj} \neq 0$ for certain $k \in \Lambda$ then $d_{jj}=d_{kk}$ for any $j \in \Lambda$.
\end{prop}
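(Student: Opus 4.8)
The plan is to exploit the elementary but fundamental identity that, for any derivation $d$ of an evolution algebra and any vector $e_i$ of a natural basis, one has
\[
d(e_i^2) = d(e_i)e_i + e_i d(e_i) = 2 e_i d(e_i) = 2 e_i \sum_{k\in\Lambda} d_{ki} e_k = 2 d_{ii} e_i^2,
\]
where the third equality uses the defining orthogonality relations $e_i e_k = 0$ for $k\neq i$. I would record this as the first step, since it is the only place where the natural-basis structure is used.

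The second step is to apply $d$ to the hypothesised expansion $e_j^2 = \sum_{k=1}^{\ell} \beta_{kj}\, e_k^2$ and evaluate the outcome in two different ways. On one hand, applying the identity above to $e_j$ and then re-expanding $e_j^2$ gives $d(e_j^2) = 2 d_{jj} e_j^2 = 2 d_{jj} \sum_{k=1}^{\ell} \beta_{kj}\, e_k^2$. On the other hand, using linearity of $d$ and the identity applied to each $e_k$ gives $d(e_j^2) = \sum_{k=1}^{\ell} \beta_{kj}\, d(e_k^2) = 2 \sum_{k=1}^{\ell} \beta_{kj}\, d_{kk}\, e_k^2$. Subtracting these two expressions for $d(e_j^2)$ yields
\[
\sum_{k=1}^{\ell} \beta_{kj}\,(d_{jj} - d_{kk})\, e_k^2 = 0.
\]

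The final step is to invoke the linear independence of the family $\{e_k^2\}_{k=1}^{\ell}$, which holds precisely because it is a basis of $\A^2$, to conclude that $\beta_{kj}\,(d_{jj} - d_{kk}) = 0$ for every $k \in \{1,\dots,\ell\}$. In particular, whenever $\beta_{kj} \neq 0$ we obtain $d_{jj} = d_{kk}$, which is the assertion. I do not anticipate any real obstacle: the argument is short, and the only point requiring a moment's care is the computation $d(e_i^2) = 2 d_{ii} e_i^2$, which genuinely relies on the natural-basis property and not merely on $d$ being a derivation.
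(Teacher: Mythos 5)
Your proof is correct and follows essentially the same route as the paper: apply $d$ to $e_j^2=\sum_{k=1}^\ell\beta_{kj}e_k^2$, use $d(e_i^2)=2d_{ii}e_i^2$ on both sides, and conclude from the linear independence of $\{e_k^2\}_{k=1}^\ell$ that $\beta_{kj}(d_{jj}-d_{kk})=0$. The only cosmetic difference is that the paper separates the trivial case $j\in\{1,\dots,\ell\}$, which your uniform argument covers anyway.
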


\begin{proof}
Let $d \in \Der(\A)$. If $j\in \{1,\ldots,\ell\}$ then the statement is trivially true. We study now if $j\notin \{1,\ldots,\ell\}$. Applying $d$ in the equality $e_{j}^2=\sum_{k=1}^\ell \beta_{kj} e_k^2$ we get that $e_{j}^2d_{jj}=\sum_{k=1}^\ell \beta_{kj} e_k^2 d_{kk}$. So $d_{jj}\sum_{k=1}^\ell \beta_{kj} e_k^2=\sum_{k=1}^\ell \beta_{kj} e_k^2 d_{kk} $. Then $\sum_{k=1}^\ell \beta_{kj}(d_{jj}-d_{kk})e_{k}^2=0$.   Since $\{e_i^2\}_{i=1}^\ell$ is a basis of $\A^2$ then $\beta_{kj}(d_{jj}-d_{kk})=0$ for every $k \in \{1,\ldots,\ell\}$. If  there exists $k \in \Lambda$ such that $\beta_{kj} \neq 0$ then $d_{jj}=d_{kk}$.

\end{proof}

\begin{theorem} \label{diagonalceros}

Let $\A$ be a non-degenerate evolution algebra with $\{e_i\}_{i\in \Lambda}$ natural basis and $d=(d_{ij})\in \Der(\A)$. If $\{e_i^2\}_{i=1}^\ell$ is a basis of $\A^2$ with $e_i^2e_i^2 \neq 0$ for any $i \in \Gamma_1:=\{1,\ldots, \ell\}$ then $d_{jj}=0$ for any $j\in \Lambda$.

\end{theorem}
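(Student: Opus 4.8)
The plan is to exploit Proposition \ref{diago1}: if $\{e_i^2\}_{i=1}^\ell$ is a basis of $\A^2$, then for every $j\in\Lambda$, writing $e_j^2=\sum_{k=1}^\ell \beta_{kj}e_k^2$, we have $d_{jj}=d_{kk}$ whenever $\beta_{kj}\neq 0$. Since $e_j^2\neq 0$ by non-degeneracy, for every $j$ there is at least one $k\in\Gamma_1$ with $\beta_{kj}\neq 0$, so it suffices to show $d_{kk}=0$ for every $k\in\Gamma_1$. Thus the whole problem reduces to the diagonal entries indexed by $\Gamma_1$, and the hypothesis $e_i^2 e_i^2\neq 0$ for $i\in\Gamma_1$ is precisely what we need there.

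So fix $i\in\Gamma_1$. First I would record that $d(e_i^2)=2e_i d(e_i)=2d_{ii}e_i^2$, which follows from the evolution identity $e_ie_k=0$ for $k\neq i$ (this is the standard computation already used in the proof of Proposition \ref{dii}). Next, apply $d$ to the product $e_i^2 e_i^2$: by the Leibniz rule $d(e_i^2 e_i^2)=d(e_i^2)e_i^2+e_i^2 d(e_i^2)=2\cdot 2d_{ii}e_i^2 e_i^2=4d_{ii}\,e_i^2 e_i^2$. On the other hand, expand $e_i^2=\sum_k \omega_{ik}e_k$, so $e_i^2 e_i^2=\sum_k \omega_{ik}^2 e_k^2$, and applying $d$ directly gives $d(e_i^2 e_i^2)=\sum_k \omega_{ik}^2 d(e_k^2)=\sum_k \omega_{ik}^2\cdot 2d_{kk}e_k^2 = 2\sum_k \omega_{ik}^2 d_{kk}e_k^2$. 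Comparing the two expressions yields
\[
2\sum_{k}\omega_{ik}^2 d_{kk}e_k^2 = 4d_{ii}\sum_k \omega_{ik}^2 e_k^2,
\]
i.e. $\sum_k \omega_{ik}^2(d_{kk}-2d_{ii})e_k^2 = 0$.

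Now I would use Proposition \ref{diago1} again to control the $d_{kk}$ appearing here: for each $k$ with $\omega_{ik}\neq 0$ (i.e. $k\in D^1(i)$), expand $e_k^2$ in the basis $\{e_\nu^2\}_{\nu\in\Gamma_1}$; the coefficients $d_{kk}$ all coincide with some $d_{\nu\nu}$, $\nu\in\Gamma_1$. The cleanest route, however, is probably to run the argument entirely within $\Gamma_1$: note $e_i^2 e_i^2\neq 0$ means some coefficient of $e_i^2 e_i^2$ in the basis $\{e_\nu^2\}_{\nu\in\Gamma_1}$ is nonzero; combined with $d(e_i^2 e_i^2)=4d_{ii}(e_i^2 e_i^2)$ and the already-established relation $d_{jj}=d_{kk}$ (Prop. \ref{diago1}) linking each $d_{kk}$ to entries in $\Gamma_1$, one gets a linear relation forcing $d_{ii}$ to equal a weighted combination with total weight $2$ of diagonal entries $d_{\nu\nu}$, $\nu\in\Gamma_1$, each of which is in turn forced (via the same computation applied to the nonzero coefficient $\beta$) to equal $d_{ii}$ — so the relation collapses to $d_{ii}=2d_{ii}$, hence $d_{ii}=0$. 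I would phrase this by first proving, for a single $i\in\Gamma_1$ with $e_i^2 e_i^2\neq 0$: pick $\nu\in\Gamma_1$ with $\beta_{\nu}\neq 0$ in $e_i^2 e_i^2=\sum_{\mu\in\Gamma_1}\beta_\mu e_\mu^2$; comparing $d(e_i^2 e_i^2)=4d_{ii}(e_i^2 e_i^2)$ coefficient-wise at index $\nu$ gives $2\beta_\nu d_{\nu\nu}=4d_{ii}\beta_\nu$, so $d_{\nu\nu}=2d_{ii}$; but Proposition \ref{diago1} applied to $e_i^2 e_i^2$ as an element of $\A$ — or more directly, the observation that $d_{\mu\mu}=d_{ii}$ whenever $\omega_{ii}\neq 0$-type dependence holds — needs a little care. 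The robust version: since $\A^2$ has basis $\{e_\mu^2\}_{\mu\in\Gamma_1}$ and $e_i^2 e_i^2\in\A^2$ is nonzero, and $d$ acts on $\A^2$ as the diagonal operator $e_\mu^2\mapsto 2d_{\mu\mu}e_\mu^2$, the condition that this operator maps $e_i^2 e_i^2$ to a scalar multiple ($2d_{ii}\cdot$) of itself (shown above) forces $2d_{\mu\mu}=4d_{ii}$ for every $\mu\in\Gamma_1$ with $\beta_\mu\neq 0$, hence $d_{\mu\mu}=2d_{ii}$ for such $\mu$. Finally, apply the same reasoning with $\mu$ in place of $i$ if $e_\mu^2 e_\mu^2\neq 0$; but we already know more directly from Proposition \ref{diago1} that all $d_{\mu\mu}$ with $\mu\in\Gamma_1$ that are "reachable" from $i$ agree. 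Combining $d_{\mu\mu}=2d_{ii}$ with $d_{\mu\mu}=d_{ii}$ (the latter from Prop. \ref{diago1}, since $e_i^2 e_i^2$ has $e_\mu^2$-coefficient $\beta_\mu$ and $e_i^2$ itself... ) gives $d_{ii}=0$.

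The main obstacle I anticipate is bookkeeping: making the reduction-to-$\Gamma_1$ step airtight and ensuring the linear-independence argument is applied to the correct basis $\{e_\mu^2\}_{\mu\in\Gamma_1}$ of $\A^2$ rather than to the (possibly dependent) full set $\{e_k^2\}_{k\in\Lambda}$. Once $d_{ii}=0$ is established for all $i\in\Gamma_1$, Proposition \ref{diago1} immediately propagates it: for arbitrary $j\in\Lambda$, non-degeneracy gives $e_j^2\neq 0$, so $\beta_{kj}\neq 0$ for some $k\in\Gamma_1$, whence $d_{jj}=d_{kk}=0$. I would therefore structure the write-up as: (1) $d$ restricts to the diagonal operator $2\,\mathrm{diag}(d_{\mu\mu})$ on $\A^2$; (2) for $i\in\Gamma_1$, $e_i^2 e_i^2\neq 0$ together with $d(e_i^2 e_i^2)=4d_{ii}(e_i^2 e_i^2)$ and step (1) force $d_{ii}=0$ (this is where the char $0$ — or char $\neq 2$ — hypothesis enters, to cancel the factor $2$); (3) conclude for all $j\in\Lambda$ via Proposition \ref{diago1} and non-degeneracy.
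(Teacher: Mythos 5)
Your reduction is essentially the paper's: the identity $d(e_\mu^2)=2d_{\mu\mu}e_\mu^2$, the Leibniz computation $d(e_i^2e_i^2)=4d_{ii}\,e_i^2e_i^2$, the comparison of coefficients in the basis $\{e_\mu^2\}_{\mu\in\Gamma_1}$ of $\A^2$, and the final propagation to arbitrary $j\in\Lambda$ via Proposition \ref{diago1} and non-degeneracy are all correct (that last propagation is even stated more cleanly than in the paper). The gap is in the step that is supposed to give $d_{ii}=0$ for $i\in\Gamma_1$. What the coefficient comparison actually yields is $d_{\nu\nu}=2d_{ii}$ for every $\nu\in\Gamma_1$ in the support of $e_i^2e_i^2=\sum_{\mu\in\Gamma_1}\beta_\mu e_\mu^2$, and nothing forces $\nu=i$. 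You close the argument by asserting $d_{\nu\nu}=d_{ii}$ (``all $d_{\mu\mu}$ reachable from $i$ agree''), attributing this to Proposition \ref{diago1}. But that proposition relates $d_{jj}$ and $d_{kk}$ only when the \emph{square} $e_j^2$ has nonzero coefficient on $e_k^2$ in the chosen basis of $\A^2$; for $i\in\Gamma_1$ the expansion of $e_i^2$ is trivial, and the proposition says nothing about the expansion of the element $e_i^2e_i^2$. So the identity $d_{\nu\nu}=d_{ii}$ is exactly what remains to be proved (it is a consequence of the theorem, not an available input), and with only $d_{\nu\nu}=2d_{ii}$ in hand the argument does not close.

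The missing ingredient is the iteration-plus-finiteness argument of the paper. Since $e_\mu^2e_\mu^2\neq 0$ for every $\mu\in\Gamma_1$, the relation can be iterated inside $\Gamma_1$: starting from $j_0=i$ one builds a chain $j_0,j_1,j_2,\dots$ in $\Gamma_1$ with $j_{m+1}$ in the support of $e_{j_m}^2e_{j_m}^2$ and $d_{j_{m+1}j_{m+1}}=2d_{j_mj_m}$, hence $d_{j_mj_m}=2^m d_{j_0j_0}$. As $\Gamma_1$ is finite, either some $j_m$ lies in the support of its own $e_{j_m}^2e_{j_m}^2$, giving $2d_{j_mj_m}=d_{j_mj_m}$, or the chain revisits an index, giving $2^t d_{j_mj_m}=d_{j_mj_m}$ with $t\geq 1$; in either case ${\rm char}(\bK)=0$ forces $d_{j_mj_m}=0$, and then $d_{j_0j_0}=0$ because $d_{j_mj_m}=2^md_{j_0j_0}$. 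You do gesture at ``apply the same reasoning with $\mu$ in place of $i$'', but you then discard that route in favour of the unjustified $d_{\mu\mu}=d_{ii}$; without the cycle/finiteness step the proof of $d_{ii}=0$ on $\Gamma_1$ is incomplete.
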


\begin{proof}
Let $d \in  \Der(\A)$. First, we can write $e_i^2 e_i^2= \sum_{k=1}^\ell \lambda_{ik} e_k^2$ for any $i \in \Lambda$. Let $j \in \Lambda$ then if we apply the derivation $d$ in both members of  $e_j^2 e_j^2= \sum_{k=1}^\ell \lambda_{jk} e_k^2$ we get $2 e_j^2 d(e_j^2)= 2 \sum_{k=1}^\ell \lambda_{ji}e_k d(e_k)$, so $2e_j^2e_j^2 d_{jj} = \sum_{k=1}^\ell\lambda_{jk}e_k^2 d_{kk}  $. Therefore  $2 \sum_{k=1}^\ell \lambda_{jk} e_k^2 d_{jj} = \sum_{k=1}^\ell\lambda_{jk}e_k^2 d_{kk}$, which implies $\sum_{k=1}^\ell \lambda_{jk}(2d_{jj}-d_{kk})e_k^2=0$. Since $\{e_k^2\}_{k=1}^\ell$ is a basis of $\A^2$ then  $\lambda_{jk}(2d_{jj}-d_{kk})=0$ for any $k \in \Gamma_1$. As $e_j^2e_j^2 \neq 0$ and $\A$ is non-degenerate there exists some $j_1 \in \Gamma_1$ such that $\lambda_{jj_1}\neq 0$ and so $2d_{jj}-d_{j_1j_1}=0$. Firstly, we consider the set $R=\{j \in \Gamma_1 \, \colon \, 2d_{jj}-d_{jj}=0\}$, then $d_{jj}=0$ for any $j \in R$. Secondly, let $j_0 \in \Gamma_1$ such that $j_0 \notin R$. We can write the following chain of equalities: 
$$\begin{array}{rcl}
2d_{j_0j_0}&=&d_{j_1j_1},\\
2d_{j_1j_1}&=&d_{j_2j_2}, \\
& \vdots &\\
2 d_{j_{s-1}j_{s-1}}&=&d_{j_sj_s},
\end{array}$$
with $j_1,\ldots,j_s \in \Gamma_1$. Moreover either $j_s \in R$ or $j_s \notin R$ but as $\ell$ is finite, $j_s=j_{m}$ for certain $j_m \in \{j_0,j_1,\ldots,j_{s-2}\}$. In the first case we get that  $d_{j_0j_0}=d_{j_1j_1}=\ldots=d_{j_s j_s}=0$. In the second case, if we  write $s=m+t$ for $t > 1$ then it is easy to check that $2^td_{j_mj_m}=d_{j_{m+t}j_{m+t}}$ i.e., $2^td_{j_mj_m}=d_{j_{m}j_{m}}$. Then $d_{j_mj_m}=d_{j_0j_0}=\cdots=d_{j_{s-1}j_{s-1}}=0$. Therefore if $j \in \Gamma_1$ we have proved that $d_{jj}=0$. Let $j \notin \Gamma_1$. Now, we know that $2d_{jj}-d_{j_1j_1}=0$ for certain $j_1 \in \Gamma_1$ so $d_{jj}=0$.

\end{proof}

\begin{remark} \rm In terms of matrices, for every $i \in \Lambda$ we can compute the product $e_i^2 e_i^2$ as $(M_B \circ M_B)\cdot M_B \cdot (e_1\ldots e_n)^t$ where $\circ$ is the Hadamard product (element-wise multiplication). 

\end{remark}

The converse of Theorem \ref{diagonalceros} is not true in general as shown the following example. 

\begin{exa} \rm
Let $\A$ an evolution algebra with $B=\{e_i\}_{i\in \Lambda}$ ($\Lambda=\{1,\ldots,5\}$) natural basis and multiplication table $e_1^2=e_1+e_2+e_4+e_5$, $e_2^2=e_1+e_2$, $e_3^2=e_4+e_5$, $e_4^2=-e_5^2=e_3$ and  $d=(d_{ij})\in \Der(\A)$. Since $\{e_1^2, e_2^2,e_4^2\}$ is a basis of $\A^2$ and $e_i^2e_i^2 \neq 0$ for every $i\in \{1,2,4\}$ then $d_{ii}=0$ for every $i\in  \Lambda$. But if we consider $\{e_1^2, e_3^2, e_4^2\}$ a basis of $\A^2$ then $e_3^2e_3^2=0$ and clearly $d_{ii}=0$ for every $i \in \Lambda$.
\end{exa}

\section{Derivations of Volterra Evolution Algebras} \label{volterrader}

\begin{lemma}
Let $\A$ be a non-degenerate Volterra evolution algebra with natural basis $B=(\w_{ij})$ and let $i,j \in \Lambda$ such that $\T(i)=\{i,j\}$ and there exists $\ell \in D^1(i)$ with $\w_{\ell i}^3 \neq \w_{j\ell}^3$. Suppose that  $\ell\nsim_{t_B} k$  for every $k \in D^1(i)\setminus \{ \ell\}$. Then  $d_{ij}=d_{ji}=d_{ii}=d_{jj}=d_{\ell\ell}=0$ for any $d=(d_{ij}) \in \Der(\A)$.
\end{lemma}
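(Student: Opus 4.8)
The plan is to exploit the Volterra (skew-symmetric) structure together with Proposition~\ref{PP} applied to the non-degenerate evolution algebra $\A$. Since $\T(i)=\{i,j\}$, for every index $m\in\Lambda$ with $m\notin\{i,j\}$ we have $i\not\sim_{t_B}m$ and $j\not\sim_{t_B}m$, so Proposition~\ref{PP}~\ref{PP2} already forces $d_{mi}=d_{im}=d_{mj}=d_{jm}=0$; the only possibly nonzero off-diagonal entries in the $i$-th and $j$-th rows/columns are $d_{ij}$ and $d_{ji}$. The first substantive step is to use Proposition~\ref{PP}~\ref{PP1} with the fixed $\ell\in D^1(i)$: since $i\sim_{t_B}j$ we get $d_{ji}=-\frac{\w_{j\ell}}{\w_{i\ell}}d_{ij}$, and by skew-symmetry $\w_{i\ell}=-\w_{\ell i}$, so the ratio is $-\tfrac{\w_{j\ell}}{\w_{i\ell}}=\tfrac{\w_{j\ell}}{\w_{\ell i}}$; similarly $d_{ij}=-\frac{\w_{i\ell}}{\w_{j\ell}}d_{ji}$ from the symmetric role (note $\ell\in D^1(j)=D^1(i)$ since $i\sim_{t_B}j$).

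\textbf{The two key equations.} Next I would apply Proposition~\ref{PP}~\ref{PP3} to the index $\ell$ and the column $j=i$, i.e. evaluate $\sum_{k\in D^1(\ell)}\w_{\ell k}d_{ki}$. By the hypothesis $\ell\not\sim_{t_B}k$ for every $k\in D^1(i)\setminus\{\ell\}$, and since $D^1(\ell)$ consists of such $k$ together possibly with $\ell$ itself, Proposition~\ref{PP}~\ref{PP2} kills all the terms $d_{ki}$ with $k\in D^1(\ell)$, $k\neq\ell$ — because none of those $k$ is twin to $\ell$, hence none equals $i$ or $j$ unless $k\in\{i,j\}$, and if $i\in D^1(\ell)$ then $i\not\sim_{t_B}\ell$ forces the relevant entries to vanish anyway. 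So the sum collapses: if $\ell\in D^1(\ell)$ (a loop at $\ell$) we get $\w_{\ell\ell}d_{\ell i}$, but $\w_{\ell\ell}=0$ by skew-symmetry, and in all cases the sum is $0$, while the right-hand side of \ref{PP3} is $2\w_{\ell i}d_{\ell\ell}$ if $i\in D^1(\ell)$ and $0$ otherwise. Symmetrically, using that $D^1(j)=D^1(i)$ and skew-symmetry ($\w_{\ell i}\neq0\iff \w_{i\ell}\neq0\iff i\in D^1(\ell)$), one obtains from $\ell$ and column $j$ that $2\w_{\ell j}d_{\ell\ell}=0$. The crucial extra relation comes from applying \ref{PP3} to $i$ (resp.\ $j$) with column $\ell\in D^1(i)$: $\sum_{k\in D^1(i)}\w_{ik}d_{k\ell}=2\w_{i\ell}d_{ii}$, and all $d_{k\ell}$ with $k\notin\{i,j\}$ vanish (no such $k$ is twin to $\ell$, so $d_{k\ell}=0$) except $d_{\ell\ell}$, giving $\w_{i\ell}d_{\ell\ell}+\w_{ij}d_{j\ell}=2\w_{i\ell}d_{ii}$; but $\w_{ij}=0$ since $j\notin D^1(i)$ would be false only if $j\in D^1(i)=D^1(j)$, i.e.\ $j$ is a loop, which a skew-symmetric matrix forbids when combined with $\w_{jj}=0$... this needs care, but in the generic case $\w_{ij}=0$ and we get $\w_{i\ell}d_{\ell\ell}=2\w_{i\ell}d_{ii}$, hence $d_{\ell\ell}=2d_{ii}$, and symmetrically $d_{\ell\ell}=2d_{jj}$.

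\textbf{Closing the argument with the cube condition.} At this point I would also extract, by applying \ref{PP3} to the index $\ell$ with column $\ell$ itself and using $\w_{\ell\ell}=0$, together with the equation for $e_\ell^2 e_\ell^2$ or an analogue of Theorem~\ref{diagonalceros}-type reasoning, a further linear relation. But the cleanest route is: combine $d_{ji}=\frac{\w_{j\ell}}{\w_{\ell i}}d_{ij}$ with an independently derived relation of the form $d_{ij}=\frac{\w_{\ell i}^2}{\w_{j\ell}^2}\,c$ obtained from iterating \ref{PP3} once more — specifically, applying \ref{PP3} to $\ell$ and then re-substituting through $i$ produces a factor $\w_{\ell i}^3/\w_{j\ell}^3$; the hypothesis $\w_{\ell i}^3\neq\w_{j\ell}^3$ then yields $(\w_{\ell i}^3-\w_{j\ell}^3)d_{ij}=0$, forcing $d_{ij}=0$, whence $d_{ji}=0$. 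Finally, plugging $d_{ij}=d_{ji}=0$ back into \ref{PP3} for $i$ with a column in $D^1(i)$ gives $\w_{i\ell}d_{\ell\ell}=2\w_{i\ell}d_{ii}$ and a companion equation; together with $d_{\ell\ell}=2d_{ii}=2d_{jj}$ and the relation $d_{\ell\ell}=0$ coming from $2\w_{\ell i}d_{\ell\ell}=0$ with $\w_{\ell i}\neq0$ (guaranteed since $\ell\in D^1(i)$), we conclude $d_{\ell\ell}=d_{ii}=d_{jj}=0$.

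\textbf{Main obstacle.} The hard part will be the careful bookkeeping of which entries $d_{k\ell}$, $d_{ki}$, $d_{kj}$ are forced to zero by the twin hypotheses — in particular handling the borderline cases where $i$ or $j$ lies in $D^1(\ell)$, where $\ell\in\{i,j\}$ could a priori occur (it cannot, since $\T(i)=\{i,j\}$ and $\ell\in D^1(i)$ with the stated inequality rules out degeneracies), and verifying that the chain of substitutions genuinely produces the factor $\w_{\ell i}^3/\w_{j\ell}^3$ rather than a lower power. Isolating exactly that cube, rather than a square or a sixth power, is the delicate computational nucleus of the lemma, and it is precisely why the hypothesis is phrased with cubes.
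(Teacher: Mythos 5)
Your application of Proposition \ref{PP}\ref{PP3} at the index $\ell$ is where the argument breaks. You claim the sums $\sum_{k\in D^1(\ell)}\w_{\ell k}d_{ki}$ and $\sum_{k\in D^1(\ell)}\w_{\ell k}d_{kj}$ collapse to zero, and hence $2\w_{\ell i}d_{\ell\ell}=0$, giving $d_{\ell\ell}=0$ outright. But \ref{PP2} only kills the off-diagonal entries $d_{ki}$ with $k\nsim_{t_B} i$; it says nothing about the diagonal entry $d_{ii}$ (the term $k=i$, which does occur since $\w_{\ell i}=-\w_{i\ell}\neq 0$), and it does not kill $d_{ji}$ either, because $j\sim_{t_B} i$. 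You conflated ``twin to $\ell$'' (which is what the hypothesis on $D^1(i)\setminus\{\ell\}$ controls, and what is relevant for entries of the form $d_{k\ell}$) with ``twin to $i$'' (which governs the entries $d_{ki}$ appearing in these sums). The correct equations are $\w_{\ell i}d_{ii}+\w_{\ell j}d_{ji}=2\w_{\ell i}d_{\ell\ell}$ and $\w_{\ell i}d_{ij}+\w_{\ell j}d_{jj}=2\w_{\ell j}d_{\ell\ell}$, not $0=2\w_{\ell i}d_{\ell\ell}$; note that if your collapsed version were right, the conclusion would follow with no use of the cube hypothesis at all, which should have been a warning sign.

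Those two equations are exactly the missing ingredient. Combined with your (correct) relations $d_{\ell\ell}=2d_{ii}=2d_{jj}$ (and here $\w_{ij}=0$ is not merely ``generic'': $\w_{ij}\neq0$ together with $D^1(j)=D^1(i)$ and skew-symmetry would force $\w_{ii}\neq0$, impossible in characteristic zero) and with $d_{ji}=-\frac{\w_{j\ell}}{\w_{i\ell}}d_{ij}$ from \ref{PP1}, they yield a homogeneous $2\times 2$ system in the unknowns $(d_{ii},d_{ij})$, namely $-3\w_{\ell i}\w_{i\ell}d_{ii}-\w_{\ell j}\w_{j\ell}d_{ij}=0$ and $-3\w_{\ell j}d_{ii}+\w_{\ell i}d_{ij}=0$, whose determinant equals $3(\w_{\ell i}^{3}-\w_{j\ell}^{3})$ after using skew-symmetry. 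This determinant computation is precisely where the cube hypothesis enters and is how the paper concludes $d_{ii}=d_{ij}=0$, whence all five entries vanish. In your write-up this step is only asserted (``iterating \ref{PP3} once more produces a factor $\w_{\ell i}^{3}/\w_{j\ell}^{3}$'') and you yourself flag it as the undone ``computational nucleus''; since the equations you would need for it were the very ones you incorrectly set to zero, the proposal as it stands does not prove the lemma.
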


\begin{proof}
Let $d=(d_{ij}) \in \Der(\A)$. Since $\displaystyle\sum_{k\in D^1(i)}\w_{ik}d_{k\ell}=2\w_{i\ell}d_{ii}$ and $\ell\nsim_{t_B} k$ we get that $d_{\ell\ell}=2d_{ii}$ by Proposition \ref{PP} \ref{PP2}.
Likewise, $d_{\ell\ell}=2d_{jj}$. On the other hand by Proposition \ref{PP} \ref{PP1}, $d_{ji}=-\frac{\w_{j\ell}}{\w_{i\ell}}d_{ij}$.
Moreover, since $\displaystyle\sum_{k\in D^1(\ell)}\w_{\ell k}d_{ki}=2\w_{\ell i}d_{\ell}$ and $\T(i)=\{i,j\}$ then $\w_{\ell i}d_{ii}+\w_{\ell j}d_{ji}=2\w_{\ell i}d_{\ell\ell}$. Similarly we have $\w_{\ell i}d_{ij}+\w_{\ell j}d_{jj}=2\w_{\ell j}d_{\ell\ell}$. So, we get the following homogeneous system of linear equations:
$$-3\w_{\ell i}\w_{i\ell}d_{ii}- \w_{\ell j}\w_{j\ell}d_{ij}=0, $$
$$-3\w_{\ell j}d_{ii}+\w_{\ell i}d_{ij}=0.$$
This system will have the trivial solution if and only if $\w_{\ell i}^3 \neq \w_{j\ell}^3$.
\end{proof}

\begin{prop}\label{pro:dervolt}
Let $\A$ a non-degenerate Volterra evolution algebra with  a natural basis $B=\{e_i\}_{i \in \Lambda }$ and structure matrix $M_B=(\w_{ij})$. Consider  a natural decomposition $\Lambda=\Lambda_1\cup \ldots \cup \Lambda_r$ relative to $B$ and $\alpha_{ij} \in \bbK^{\times}$ such that $e_j^2=\alpha_{ij}e_i^2$ for $i,j\in \Lambda(i)$. Then $d=(d_{ij}) \in \Der(\A)$ if and only if $d$ satisfies the  following conditions:
\begin{enumerate}[label=(\roman*)]
    \item If $i, j \in \Lambda$, $i \neq j$ and  $\{ i,j \}\not\subseteq \Lambda_{t}$ for any ${t} \in \{1,\dots, r\} $ then   
    $d_{ij}=d_{ji}=0$. \label{pro:dervolt1}
    \item If $ i,j \in \Lambda$, $i \neq j$ and  $\{ i,j \}\subseteq \Lambda_{t}$ for some $t \in \{1,\dots, r\} $ then  $d_{ij}=-\alpha_{ji}d_{ji}$.  \label{pro:dervolt2}
    \item If $ i, j \in \Lambda$ and $ i \in D^1(j)$ then  $\displaystyle 2d_{ii}= \sum_{k \in \Lambda(j)} \alpha_{jk}
    d_{kj}$.  \label{pro:dervolt3}
\end{enumerate}
\end{prop}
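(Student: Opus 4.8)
The plan is to deduce this result as a specialization of Proposition \ref{PP} to the Volterra setting, using the fact that a non-degenerate Volterra evolution algebra is automatically perfect on each twin component in a controlled way; more precisely, I would first observe that for a Volterra algebra the twin relation $\sim_{t_B}$ and the relation ``$e_i^2$ and $e_j^2$ are linearly dependent'' coincide (up to indices in $\Lambda_0$, which here is empty by non-degeneracy). Indeed, skew-symmetry of $M_B$ forces $\omega_{ij}\neq 0 \iff \omega_{ji}\neq 0$, so if $i\sim_{t_B} j$ then $i\in D^1(j)\iff i\in D^1(i)$, and a short computation with the skew-symmetry relations shows that $D^1(i)=D^1(j)$ already forces $e_i^2$ and $e_j^2$ to be proportional. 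Hence ``$i\sim_{t_B}j$'' is equivalent to ``$j\in\Lambda(i)$'', which by Remark \ref{decom} is equivalent to ``$\{i,j\}\subseteq\Lambda_t$ for some $t\in\{1,\dots,r\}$''. This identification is what lets me translate conditions \ref{PP1}, \ref{PP2}, \ref{PP3} of Proposition \ref{PP} into \ref{pro:dervolt1}, \ref{pro:dervolt2}, \ref{pro:dervolt3}.

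With that dictionary in hand the three items follow step by step. Item \ref{pro:dervolt1} is immediate: $\{i,j\}\not\subseteq\Lambda_t$ for all $t$ means $i\not\sim_{t_B}j$, so Proposition \ref{PP}\ref{PP2} gives $d_{ij}=d_{ji}=0$. For item \ref{pro:dervolt2}, when $\{i,j\}\subseteq\Lambda_t$ we have $i\sim_{t_B}j$, so Proposition \ref{PP}\ref{PP1} gives $d_{ji}=-\frac{\omega_{jk}}{\omega_{ik}}d_{ij}$ for every $k\in D^1(i)$; on the other hand $e_j^2=\alpha_{ij}e_i^2$ means $\omega_{jk}=\alpha_{ij}\omega_{ik}$ for all $k\in D^1(i)$, hence $\frac{\omega_{jk}}{\omega_{ik}}=\alpha_{ij}$ and $d_{ji}=-\alpha_{ij}d_{ij}$, i.e. $d_{ij}=-\alpha_{ji}d_{ji}$ after relabelling (using $\alpha_{ij}\alpha_{ji}=1$). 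For item \ref{pro:dervolt3}, suppose $i\in D^1(j)$. By Corollary \ref{corol:prop}\ref{corol:prop2} applied with the roles of the indices as $j\in D^1(j)$? — more directly, apply Proposition \ref{PP}\ref{PP3} with the vertex $j$: since $i\in D^1(j)$, $\sum_{k\in D^1(j)}\omega_{jk}d_{ki}=2\omega_{ji}d_{jj}$. By the twin identification, $d_{ki}=0$ unless $k\sim_{t_B}i$, i.e. unless $k\in\Lambda(i)=\Lambda(j)$ (noting $i$ and $j$ lie in the same block since $i\in D^1(j)$ forces... — here I need to be careful, see below), and then $\omega_{jk}=\alpha_{jk}\omega_{jj}$... — actually the cleaner route is: $e_k^2=\alpha_{jk}e_j^2$ gives $\omega_{\ell k}=\alpha_{jk}\omega_{\ell j}$ for all $\ell$, so with $\ell$ ranging appropriately one extracts $\omega_{jk}=\alpha_{jk}\omega_{ji}/\alpha_{ji}$ up to the scalar, leading to $2d_{ii}=\sum_{k\in\Lambda(j)}\alpha_{jk}d_{kj}$ after dividing by the common factor $\omega_{ji}$ (nonzero since $i\in D^1(j)$).

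The main obstacle, and the point requiring the most care, is the bookkeeping in item \ref{pro:dervolt3}: one must verify that the index $i$ with $i\in D^1(j)$ indeed satisfies $i\in\Lambda(j)$ (equivalently $e_i^2$ proportional to $e_j^2$), so that the sum over $\Lambda(j)$ and the constants $\alpha_{jk}$ make sense and the cancellation of $\omega_{ji}$ is legitimate. This is where non-degeneracy and skew-symmetry must be combined: from $i\in D^1(j)$ we get $\omega_{ji}\neq 0$, hence $\omega_{ij}\neq 0$, hence $i\in D^1(i)$ and $j\in D^1(i)$; running the twin-versus-dependence argument above then places $i$ and $j$ in the same $\Lambda_t$. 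Once this is nailed down, everything else is a direct substitution using $e_k^2=\alpha_{jk}e_j^2 \Leftrightarrow \omega_{\ell k}=\alpha_{jk}\omega_{\ell j}\ (\forall \ell)$ together with Proposition \ref{PP}, and the converse direction follows by reversing each implication, exactly as in the proof of Proposition \ref{PP}. I would also remark explicitly that $\Lambda_0=\emptyset$ here by non-degeneracy, so the natural decomposition starts at $\Lambda_1$, matching the statement.
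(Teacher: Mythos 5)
Your treatment of item \ref{pro:dervolt2} is fine and matches the paper, but two of your key claims are false, and they sink both item \ref{pro:dervolt1} and item \ref{pro:dervolt3} as you argue them. First, the ``dictionary'' you build on --- that for a Volterra algebra $i\sim_{t_B}j$ is equivalent to $\{i,j\}\subseteq\Lambda_t$ --- is wrong: skew-symmetry does \emph{not} force twins to have proportional squares. Take the $4$-dimensional Volterra algebra with $e_1^2=e_3+e_4$, $e_2^2=e_3+2e_4$, $e_3^2=-e_1-e_2$, $e_4^2=-e_1-2e_2$; here $D^1(1)=D^1(2)=\{3,4\}$ but $e_1^2,e_2^2$ are linearly independent. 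So ``$\{i,j\}\not\subseteq\Lambda_t$ for all $t$'' does not give $i\not\sim_{t_B}j$, and you cannot invoke Proposition \ref{PP}\ref{PP2} for item \ref{pro:dervolt1}. The paper instead gets \ref{pro:dervolt1} directly from Corollary \ref{corol:prop}\ref{corol:prop1}: linear independence of $e_i^2,e_j^2$ already forces $d_{ij}=d_{ji}=0$, with no twin argument needed.

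Second, the step you yourself flag as the ``main obstacle'' in item \ref{pro:dervolt3} is based on a misreading and on a false implication. You try to prove $i\in\Lambda(j)$ when $i\in D^1(j)$; in a Volterra algebra this is impossible: if $e_i^2=\alpha_{ji}e_j^2$ then $\omega_{ii}=\alpha_{ji}\omega_{ji}\neq 0$, contradicting the zero diagonal (the paper even records $\Lambda(i)\cap\Lambda(j)=\emptyset$ later). Your intermediate claim ``$\omega_{ij}\neq 0$ hence $i\in D^1(i)$'' is also false ($\omega_{ij}\neq0$ gives $j\in D^1(i)$). In fact $i\in\Lambda(j)$ is not needed at all: the statement only involves $d_{ii}$ on the left and the class of $j$ on the right. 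The correct route (the paper's) is to apply \eqref{eq:der2} to the pair $(i,j)$, namely $\sum_{k}\omega_{ik}d_{kj}=2\omega_{ij}d_{ii}$, restrict the sum to $k\in\Lambda(j)$ using \ref{pro:dervolt1}, and then use skew-symmetry $\omega_{ik}=-\omega_{ki}$ together with the \emph{row} proportionality $\omega_{ki}=\alpha_{jk}\omega_{ji}$ (note $e_k^2=\alpha_{jk}e_j^2$ gives $\omega_{k\ell}=\alpha_{jk}\omega_{j\ell}$, not the column relation $\omega_{\ell k}=\alpha_{jk}\omega_{\ell j}$ you wrote) to obtain $\sum_k\omega_{ik}d_{kj}=\omega_{ij}\sum_{k\in\Lambda(j)}\alpha_{jk}d_{kj}$; since $i\in D^1(j)$ and $M_B$ is skew-symmetric, $\omega_{ij}\neq0$ and one cancels it. Your alternative of applying the derivation identity at the vertex $j$ produces an equation in $d_{jj}$ and $d_{ki}$, which is not item \ref{pro:dervolt3}. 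Finally, the converse cannot be waved through ``by reversing implications'': the paper verifies conditions \ref{PP1}--\ref{PP3} of Proposition \ref{PP} case by case, and with your broken dictionary that verification does not go through as stated.
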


\begin{proof}
If $i \neq j$ and $\{i,j\} \not\subseteq \Lambda_{t}$ for any ${t} \in \{1,\dots, r\} $, then $e_i^2$ and $e_j^2$ are linearly independent. By Corollary \ref{corol:prop} \ref{corol:prop1} we have $d_{ij}=d_{ji}=0$, which proves item \ref{pro:dervolt1}. Now, note that if $i \neq j$ and $\{i,j\} \subseteq \Lambda_{t}$ for some ${t} \in \{1,\dots, r\}$, then  $e_i^2=\alpha_{ji}e_j^2$ and $w_{ik}=\alpha_{ji} w_{jk}$ for all $k \in D^1(j)$. By Proposition \ref{PP} \ref{PP1} we have
$$d_{ij}=-\frac{w_{ik}}{w_{jk}}d_{ji}=-\alpha_{ji}d_{ji},$$
which proves item \ref{pro:dervolt2}. Now, let $i, j \in \Lambda$.   By item \ref{pro:dervolt1}, if $k \not\in \Lambda(j)$  then $d_{kj}=0$. We have 
\begin{eqnarray} \label{eq:2diivolterra}
\sum_{k\in \Lambda} w_{ik}d_{kj}&=&\sum_{k\in \Lambda(j)}  w_{ik}d_{kj}=\sum_{k\in  \Lambda(j)} -w_{ki}d_{kj}=\sum_{k\in  \Lambda(j)}-\alpha_{jk} w_{ji}d_{kj}\\
 &=&-w_{ji}\sum_{k\in  \Lambda(j)} \alpha_{jk} d_{kj}=w_{ij}\sum_{k\in  \Lambda(j)} \alpha_{jk} d_{kj} \nonumber
\end{eqnarray}
for any $i,j\in \Lambda$. On the other hand, using Equation \eqref{eq:der2} we get  $2 w_{ij}d_{ii}=\sum_{k=1}^{n} w_{ik}d_{kj}$ then $2 w_{ij}d_{ii}=w_{ij}\sum_{k\in  \Lambda(j)} \alpha_{jk} d_{kj}$. Now, if $i \in D^1(j)$ therefore $2d_{ii}=\sum_{k\in  \Lambda(j)} \alpha_{jk} d_{kj}$, which proves item \ref{pro:dervolt3}. Conversely, let $d=(d_{ij})$ satisfying conditions \ref{pro:dervolt1}-\ref{pro:dervolt3}. We will prove that $d$ satisfies conditions \ref{PP1}-\ref{PP3} of Proposition \ref{PP}.

Let $i,j \in \Lambda$, $i\neq j$ and $i \sim_t j$. 

\noindent\textbf{Case 1.} If $\{ i,j \} \not\subseteq \Lambda_{t}$ for any $t\in \{1,\dots,r\}$. Then by item \ref{pro:dervolt1} $d_{ij}=d_{ji}=0$.

\noindent\textbf{Case 2.} If $\{ i,j \} \subseteq \Lambda_{t}$ for some $t \in \{1,\dots,r\}$. Then by item \ref{pro:dervolt2} $d_{ij}=-\alpha_{ji}d_{ji}$. Note that $\alpha_{ji}=\frac{w_{ik}}{w_{jk}}$ for all $k \in D^1(i)$. 

Therefore, for both cases, we have that $d_{ji}=-\frac{w_{jk}}{w_{ik}} d_{ij}$ for all $k \in D^1(i)$. 

Let $i,j \in \Lambda$, $i\neq j$ and $i \not\sim_t j$. Then $e_i^2$ and $e_j^2$ are linearly independent and by Corollary \ref{corol:prop} \ref{corol:prop1} we get that $d_{ij}=d_{ji}=0.$ 

Let $i \in \Lambda$, by Equation \eqref{eq:2diivolterra} and item \ref{pro:dervolt3} we obtain that
$$
\sum_{k\in \Lambda} \w_{ik}d_{kj}=w_{ij}\sum_{k\in  \Lambda(j)} \alpha_{jk} d_{kj}=
\left\{
\begin{array}{cl}
0&\text{ if }j\notin D^1(i)\\[.2cm]
2\w_{ij}d_{ii}&\text{ if }j\in D^1(i).
\end{array}\right.
$$ 
\end{proof}

\begin{corollary}
Let $\A$ be a non-degenerate Volterra evolution algebra and $d=(d_{ij})\in \Der(\A)$. If $i,\ell \in D^1(j)$ for some $j \in \Lambda$ then $d_{ii}= d_{\ell \ell}$.

\end{corollary}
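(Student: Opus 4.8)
The plan is to read the conclusion straight off item \ref{pro:dervolt3} of Proposition \ref{pro:dervolt}. Since $\A$ is non-degenerate we have $\ann(\A)=0$, so in the natural decomposition of $\Lambda$ relative to the given natural basis there is no $\Lambda_0$ part and Proposition \ref{pro:dervolt} applies verbatim; fix the scalars $\alpha_{jk}\in\bK^{\times}$ with $e_k^2=\alpha_{jk}e_j^2$ for $k\in\Lambda(j)$ as there.

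First I would note that the hypothesis $i,\ell\in D^1(j)$ says in particular that $i\in D^1(j)$ and $\ell\in D^1(j)$. Applying item \ref{pro:dervolt3} of Proposition \ref{pro:dervolt} to the pair $(i,j)$ gives
\[
2d_{ii}=\sum_{k\in\Lambda(j)}\alpha_{jk}d_{kj},
\]
and applying the same item to the pair $(\ell,j)$ gives
\[
2d_{\ell\ell}=\sum_{k\in\Lambda(j)}\alpha_{jk}d_{kj}.
\]
The right-hand sides coincide: the expression $\sum_{k\in\Lambda(j)}\alpha_{jk}d_{kj}$ depends only on $j$ (through the index set $\Lambda(j)$, the constants $\alpha_{jk}$ and the entries $d_{kj}$) and not on which first-generation descendant of $j$ we started from. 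Equating the two left-hand sides yields $2d_{ii}=2d_{\ell\ell}$, and since ${\rm char}(\bK)=0$ we cancel the $2$ to get $d_{ii}=d_{\ell\ell}$.

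There is essentially no obstacle here; the whole content is already in Proposition \ref{pro:dervolt}, and this corollary is merely the observation that the common value it produces for $2d_{ii}$ is independent of the choice of $i\in D^1(j)$. One could alternatively unwind Equation \eqref{eq:2diivolterra} to reach the same identity, but invoking item \ref{pro:dervolt3} directly is the cleanest route, so I would present it that way.
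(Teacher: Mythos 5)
Your proof is correct and is exactly the paper's argument: the paper likewise derives the corollary as a direct consequence of Proposition \ref{pro:dervolt}\ref{pro:dervolt3}, the only content being that the right-hand side $\sum_{k\in\Lambda(j)}\alpha_{jk}d_{kj}$ depends only on $j$. Nothing further is needed.
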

\begin{proof} The result is a direct consequence of Proposition \ref{pro:dervolt} \ref{pro:dervolt3}.
\end{proof}

\begin{theorem}\label{teo:dervolt}
Let $\A$  be a non-degenerate Volterra evolution algebra with a natural basis $B=\{e_i\}_{i \in \Lambda }$ and structure matrix $M_B=(\w_{ij})$. Let  $\Lambda=\Lambda_1 \cup \ldots \cup \Lambda_r$ be a natural decomposition and $\alpha_{ij} \in \bbK^{\times}$ such that $e_j^2=\alpha_{ij}e_i^2$ for $i,j\in \Lambda(j)$. Define $\nu_t:=|\Lambda_t|$ for all $t\in \{1,\dots, r\}$. If $e_i^2e_i^2\neq 0$ for all $i \in \Lambda$ then there exists a Volterra evolution algebra $\A'$ with a natural basis $B'$ such that $\Der(\A)=\Der(\A')$. Moreover, the structure matrix $M_{B'}$ is the following block diagonal matrix $$\diag({H_1,\ldots,H_c}),$$ 
 where $r=2c+q$, $q\in \{0,1\}$ and $H_t$ is given by:
  \begin{enumerate}[label=(\roman*)]
    \item  \label{teo:dervolt:even} If $q=0$ then  for $t \in \{1,\dots, c\}$ we have that 
    \begin{equation}\label{eq:even}H_t=\left(
    \begin{matrix} 
    0 & F_t \\
    -F_t^T & 0
    \end{matrix}\right),\end{equation}
    where  $F_t \in M_{\nu_{t},\nu_{t+1}} (\bbK)$  is a matrix without null entries.
    \item  \label{teo:dervolt:odd} If $q=1$  then for  $t \in \{1,\ldots,c-1\}$ the matrix $H_t$ is of the form \eqref{eq:even} and \begin{equation}\label{eq:odd}H_c=\left(
    \begin{matrix} 
    0 & F_c & F_{c+1} \\
    -F_c^T & 0 & 0 \\
    -F_{c+1}^T & 0 & 0
    \end{matrix}\right)
   \end{equation} 
     $F_{c} \in M_{\nu_{r-2},\nu_{r-1}} (\bbK)$ and  $F_{c+1} \in M_{\nu_{r-2},\nu_{r}} (\bbK)$are  matrices
    without null entries.
    \end{enumerate}
\end{theorem}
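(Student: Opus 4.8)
The plan is to pin down $\Der(\A)$ from the hypothesis and then engineer $\A'$ so that its derivations are forced to satisfy the same description.

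\emph{Step 1: zero diagonal and a block description of $\Der(\A)$.} Since $\A^2$ is spanned by squares, choose $\Gamma_1\subseteq\Lambda$ with $\{e_i^2\}_{i\in\Gamma_1}$ a basis of $\A^2$; by hypothesis $e_i^2e_i^2\neq 0$ for $i\in\Gamma_1$, so Theorem~\ref{diagonalceros} gives $d_{jj}=0$ for every $j$ and every $d\in\Der(\A)$. Inserting this in Proposition~\ref{pro:dervolt}, part~\ref{pro:dervolt3} becomes $\sum_{k\in\Lambda(j)}\alpha_{jk}d_{kj}=0$ for all $j$ (the right side $2\w_{ij}d_{ii}$ vanishes and $D^1(j)\neq\emptyset$). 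Together with \ref{pro:dervolt1}--\ref{pro:dervolt2} this shows every $d\in\Der(\A)$ is block diagonal along $\Lambda=\Lambda_1\cup\dots\cup\Lambda_r$, each block $(d_{ij})_{i,j\in\Lambda_t}$ being subject only to: zero diagonal, $d_{ij}=-\alpha_{ji}d_{ji}$, and $\sum_{k\in\Lambda_t}\alpha_{jk}d_{kj}=0$. So $\Der(\A)$ is determined by the data $(\nu_t,[\beta^{(t)}])_{t=1}^r$, where $\beta^{(t)}\in(\bK^{\times})^{\nu_t}$ records the within-block ratios $e_k^2=\beta^{(t)}_k e_{j_t}^2$ ($k\in\Lambda_t$), up to scaling.

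\emph{Step 2: the shape of $M_B$.} Indices in the same $\Lambda_t$ are twins and, by skew-symmetry, $\Lambda_t$ carries no arrows ($\w_{ji}=\alpha_{ij}\w_{ii}=0$). Propagating $e_j^2=\alpha_{ij}e_i^2$ through the structure constants gives $\w_{ik}=\beta^{(t)}_i\beta^{(s)}_k c_{ts}$ for $i\in\Lambda_t$, $k\in\Lambda_s$, with $C=(c_{ts})$, $c_{ts}=\w_{j_tj_s}$, skew-symmetric with zero diagonal; since the decomposition is genuine, no two rows of $C$ are proportional. Setting $W_s=\sum_{k\in\Lambda_s}\beta^{(s)}_k e_k$ one has $e_{j_t}^2=\sum_s c_{ts}W_s$, $W_sW_{s'}=0$ ($s\neq s'$), $W_s^2=\sigma_s e_{j_s}^2$ with $\sigma_s=\sum_{k\in\Lambda_s}(\beta^{(s)}_k)^3$, whence $e_i^2e_i^2=(\beta^{(t)}_i)^2\sum_s c_{ts}^2\sigma_s e_{j_s}^2$. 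The hypothesis forces all of these to be nonzero; played against the ``no two proportional rows'' property of $C$, this restricts how many $\sigma_t$ vanish (and which blocks) and constrains the parity of $r$.

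\emph{Step 3: construction of $\A'$.} Reorder the blocks, group them into $c$ pairs plus, if $r$ is odd, one triple $(\Lambda_{r-2},\Lambda_{r-1},\Lambda_r)$, placing any block with $\sigma=0$ into an outer slot of the triple. For a pair $(\Lambda_a,\Lambda_b)$ take $F_t=u\,v^{T}$ with $u\in\bK^{\nu_a}$, $v\in\bK^{\nu_b}$ proportional to $\beta^{(a)}$, $\beta^{(b)}$ and scaled so that $\sum_i u_i^3\neq 0$, $\sum_j v_j^3\neq 0$ (possible since $\bK$ is infinite); for the triple, the analogous rank-one choice for $F_c,F_{c+1}$, using the leftover scalars to arrange the central nonvanishing condition. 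Let $\A'$ have structure matrix $\diag(H_1,\dots,H_c)$ with the $H_t$ as in the statement. Then $\A'$ is a non-degenerate Volterra algebra (each $H_t$ is skew-symmetric with no zero rows); the rank-one $F_t$ are chosen so the natural decomposition of $\A'$ reproduces the list $(\nu_t,[\beta^{(t)}])$ of $\A$; and the scaling choices ensure $(e'_i)^2(e'_i)^2\neq 0$ on a basis of squares of $\A'$, so by Theorem~\ref{diagonalceros} and Proposition~\ref{pro:dervolt} the derivations of $\A'$ obey the same block description. Hence $\Der(\A')=\Der(\A)$.

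\emph{Main obstacle.} The delicate point is Step~3 married to Step~2: one must extract from $e_i^2e_i^2\neq 0$, via the identity $e_i^2e_i^2=(\beta^{(t)}_i)^2\sum_s c_{ts}^2\sigma_s e_{j_s}^2$ and the structure of the skew matrix $C$, precise enough control of the blocks with $\sigma_t=0$ to guarantee that the pairing-plus-triple scheme can actually be completed, and that $q$ is exactly what the parity bookkeeping demands, all while retaining the cube-sum nonvanishing needed for $\A'$ to again have zero-diagonal derivations. The odd case ($q=1$, with the triple block $H_c$) is where essentially all of the case analysis is concentrated.
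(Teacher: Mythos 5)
Your core strategy is the same as the paper's: use Theorem~\ref{diagonalceros} to force $d_{ii}=0$ for every $d\in\Der(\A)$, observe via Proposition~\ref{pro:dervolt} that $\Der(\A)$ is then governed only by the sets $\Lambda_t$ and the scalars $\alpha_{jk}$, and build $\A'$ by pairing blocks (with one triple when $r$ is odd), taking each $F_t$ to be the rank-one outer product of the $\alpha$-vectors of the two paired blocks. That is exactly the matrix \eqref{F_i} of the paper, whose $(i,k)$ entry is $\alpha_{pi}\alpha_{qk}$, and the defining relations $f_{p}^2=\sum_{k\in\Lambda_{2t}}\alpha_{qk}f_k$, $f_j^2=\alpha_{pj}f_p^2$, etc., guarantee $f_i^2=\alpha_{ji}f_j^2$ with the same $\alpha$'s as in $\A$ — which is the whole point of the construction.

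Two of your steps, however, would fail as written. First, in Step~2 you claim that $e_i^2e_i^2\neq 0$ ``constrains the parity of $r$'' and dictates how the blocks may be grouped; it does not. In the paper $q$ is simply $r\bmod 2$, $c=\lfloor r/2\rfloor$, and the grouping is by fiat (consecutive blocks of an arbitrary reordering); the hypothesis is used exactly once, to invoke Theorem~\ref{diagonalceros}. Second, and more seriously, the rescaling in Step~3 cannot achieve $\sum_i u_i^3\neq 0$: since $\A'$ must reproduce the within-block ratios, $u$ is forced to be $\lambda\beta^{(t)}$ for some scalar $\lambda$, and $\sum_i(\lambda\beta^{(t)}_i)^3=\lambda^3\sum_i(\beta^{(t)}_i)^3$ vanishes or not independently of $\lambda$. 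So if a paired block has vanishing cube sum, no choice of scale repairs it. The underlying worry you are trying to address — that $f_i^2f_i^2$ could vanish in $\A'$, so that Theorem~\ref{diagonalceros} need not apply to $\A'$ and the inclusion $\Der(\A')\subseteq\Der(\A)$ is not automatic — is a legitimate subtlety, but your mechanism (reshuffling ``bad'' blocks into the outer slots of the triple plus rescaling) does not resolve it; the paper does not introduce the matrix $C$, the quantities $\sigma_s$, or any such case analysis, and instead passes directly from $d_{ii}=0$ and the coincidence of the $\alpha$-data to $\Der(\A)=\Der(\A')$ via Proposition~\ref{pro:dervolt}.
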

\begin{proof} We define $s_0=0$ and $s_{h}=\sum_{k=1}^{h} \nu_{k}$ with $h\in \{1,\ldots,r\}$. Note that we can reorder $\Lambda$ such that $\Lambda_h=\{s_{h-1}+1,\ldots,s_{h}\}$ for $h\in \{1,\ldots,r\}$. First, we assume that $r$ is even.  We are going to construct an evolution algebra  $\A'$ with  natural basis  $B'=\{f_j\}_{j \in \Lambda }$. To describe the product in $\A'$, we consider  $t \in \{1,\ldots,c\}$ and denote by $p=s_{2t-2}+1$ and $q=s_{2t-1}+1$. Now, with this notation we define
\begin{alignat}{2}
f_{p}^2&:=\sum_{k \in \Lambda_{2t}} \alpha_{qk} f_k, \label{eq:fi1} \\ f_{j}^2&:=\alpha_{pj}f_{p}^2, \text{ for all } j \in \Lambda_{2t-1}\setminus\{ p\},\label{eq:fil} \\ 
f_{q}^2&:=-\sum_{k \in \Lambda_{2t-1}} \alpha_{pk} f_k, \label{eq:fj1}\\  f_{j}^2&:=\alpha_{qj}f_{q}^2, \text{ for all j}  \in \Lambda_{2t}\setminus\{ q\}.\label{eq:fjt}
\end{alignat}

Note that $D^1(\Lambda_{2t-1})=\Lambda_{2t}$ and $D^1(\Lambda_{2t})=\Lambda_{2t-1}$ for $t \in \{1,\dots, c\}$. 
So, we get that the structure matrix of $\A'$ relative to that basis is given by the diagonal matrix $\diag{(H_1,\ldots,H_c)}$ with $$H_{t}=\left(
    \begin{matrix} 
    0 & F_t \\
    -F_t^T & 0
    \end{matrix}\right)$$ and
\begin{equation}\label{F_i}F_t:=
\begin{pmatrix}
1& \alpha_{qq+1}&\dots &\alpha_{qs_{2t}} \\
\alpha_{pp+1} & \alpha_{pp+1}\alpha_{qq+1}&\dots &\alpha_{pp+1}\alpha_{qs_{2t}} \\
\vdots & \vdots &\ddots &\vdots \\
\alpha_{ps_{2t-1}} & \alpha_{ps_{2t-1}}\alpha_{qq+1}&\dots &\alpha_{ps_{2t-1}}\alpha_{qs_{2t}} 
\end{pmatrix}\end{equation}
for every $t\in \{1,\ldots,c\}$. Consequently $\A'$ is a Volterra evolution algebra. Observe that for all $i,j \in \Lambda(j)$ we have
$e_i^2=\alpha_{ji}e_j^2 \text{ and } f_i^2=\alpha_{ji}f_j^2$. As $e_i^2e_i^2\neq 0$ for all $i \in \Lambda$, then by Theorem \ref{diagonalceros} we obtain that $d_{ii}=0$ for all $i \in \Lambda$. Thus, by Proposition \ref{pro:dervolt} follows that $\Der(\A)=\Der(\A')$, as required. 

Now, we assume that $r$ is odd. In this case we consider an evolution algebra $\A'$ with natural basis $B'=\{f_j\}_{j \in \Lambda }$. To define the product in $\A'$  we consider $t \in \{1,\dots, c-1\}$ and we define
$f_j$ as in Equations (\ref{eq:fi1}-\ref{eq:fjt}) for $j \in \Lambda_{2t-1}\cup \Lambda_{2t}$. We denote by
$p=s_{r-3}+1$, $q=s_{r-2}+1$ and $\ell=s_{r-1}+1$ and we define
\begin{alignat*}{2}
f_{p}^2&:=\sum_{k \in \Lambda_{r-1}} \alpha_{qk} f_k+\sum_{k \in \Lambda_{r}} \alpha_{\ell k} f_k, \\ f_{j}^2&:=\alpha_{pj}f_{p}^2, \text{ for all } j \in \Lambda_{r-2}\setminus\{p\}, \\ 
f_{q}^2&:=-\sum_{k \in \Lambda_{r-2}} \alpha_{pk} f_k, \\  f_{j}^2&:=\alpha_{qj}f_{q}^2, \text{ for all } j \in \Lambda_{r-1}\setminus\{q\}, \\
f_{\ell}^2&:=-\sum_{k \in \Lambda_{r-2}} \alpha_{pk} f_k; \\  f_{j}^2&:=\alpha_{\ell j}f_{\ell}^2, \text{ for all } j \in \Lambda_{r}\setminus\{ \ell\}. 
\end{alignat*}

We have that $D^1(\Lambda_{2t-1})=\Lambda_{2t}$ and $D^1(\Lambda_{2t})=\Lambda_{2t-1}$ for $t \in \{1,\ldots, c-1\}$, $D^1(\Lambda_{r-2})=\Lambda_{r-1}\cup\Lambda_{r}$ and $D^1(\Lambda_{r-1})=D^1(\Lambda_{r})=\Lambda_{r-2}$. Furthermore, as in the previous case, we can reorder $B'$ such that the structure matrix of $\A'$ relative to that basis is a block diagonal matrix $\diag{(H_1,\ldots,H_c)}$ where $H_t$ is a matrix of the form \eqref{eq:even},  $F_t$ of the form \eqref{F_i} for $t\in \{1, \ldots, c-1\}$, $H_c$ is of the form \eqref{eq:odd} and 
$$F_c=
\begin{pmatrix}
1 & \alpha_{qq+1}&\dots &\alpha_{qs_{r-1}} \\
\alpha_{pp+1} & \alpha_{pp+1}\alpha_{qq+1}&\dots &\alpha_{pp+1}\alpha_{qs_{r-1}} \\
\vdots & \vdots &\ddots &\vdots \\
\alpha_{ps_{r-2}} & \alpha_{ps_{r-2}}\alpha_{qq+1}&\dots &\alpha_{ps_{r-2}}\alpha_{qs_{r-1}} 
\end{pmatrix},$$

$$F_{c+1}=
\begin{pmatrix}
1 & \alpha_{\ell \ell +1}&\dots &\alpha_{\ell s_{r}} \\
\alpha_{pp+1} & \alpha_{pp+1}\alpha_{\ell \ell +1}&\dots &\alpha_{pp+1}\alpha_{\ell s_{r}} \\
\vdots & \vdots &\ddots &\vdots \\
\alpha_{ps_{r-2}}& \alpha_{ps_{r-2}}\alpha_{\ell \ell +1}&\dots &\alpha_{ps_{r-2}}\alpha_{\ell s_{r}} 
\end{pmatrix}.$$

Analogously to other case, if $i,j \in \Lambda(j)$, we have
$e_i^2=\alpha_{ji}e_j^2 \text{ and } f_i^2=\alpha_{ji}f_j^2$ and $d_{ii}=0$ for every $i \in \Lambda$. Consequently, $\Der(\A)=\Der(\A')$.
\end{proof}

\begin{remark} \rm
Thanks to the Corollary \ref{blockmatrix} since $\A'$ is a non-degenerate reducible evolution algebra, to compute the set of derivations of $\A$ is equivalent to compute the set of derivations over the evolution ideals $I_{i}=\spa(\{e_j \colon j \in \Lambda_i \cup \Lambda_{i+s}\})$ for any $i\in \{1,\ldots,s-1\}$, $I_{s}=\spa(\{e_j \colon j \in \Lambda_s \cup \Lambda_{2s}\})$ if $r=2s$ and $I_{s}=\spa(\{e_j \colon j \in \Lambda_s \cup \Lambda_{2s} \cup \Lambda_{2s+1}\})$ if $r=2s+1$. So, we can reduce the dimension of evolution algebras whose space of derivations will be studied.  

\end{remark}

\begin{prop}\label{prop:sumalpha3=0}
Let $\A$ be  a non-degenerate  Volterra evolution algebra with  a natural basis $B=\{e_i\}_{i \in \Lambda }$ and  $\Lambda=\Lambda_1\cup \ldots \cup \Lambda_r$ a natural decomposition of $\Lambda$ relative to $B$. Let $\alpha_{ij} \in \bbK^{\times}$ such that $e_j^2=\alpha_{ij}e_i^2$ for $i,j\in \Lambda(j)$.  Suppose that there exists
$i \in \Lambda$ such that for any $j\in D^1(i)$  it is verifies that $ \sum_{k \in \Lambda(j)}  \alpha_{jk}^3=0$ then $e_i^2e_i^2=0$.
\end{prop}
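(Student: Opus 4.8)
The plan is to compute $e_i^2e_i^2$ directly and to organize the resulting sum according to the linear-dependence classes $\Lambda(j)$. First I would record that, since $\A$ is non-degenerate, $\Lambda_0=\emptyset$, so the sets $\Lambda(j)$ partition $\Lambda$; and that expanding $e_i^2=\sum_{k\in D^1(i)}\w_{ik}e_k$ and using $e_ke_\ell=0$ for $k\neq\ell$ gives
\[
e_i^2e_i^2=\sum_{k\in D^1(i)}\w_{ik}^2\,e_k^2 .
\]

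The heart of the argument is the following local identity: if $j\in D^1(i)$ and $k\in\Lambda(j)$, then $e_k^2=\alpha_{jk}e_j^2$ (which is just the definition of $\alpha_{jk}$) and moreover $\w_{ik}=\alpha_{jk}\w_{ij}$. For the latter I would compare the coefficients of $e_i$ on both sides of $e_k^2=\alpha_{jk}e_j^2$ to get $\w_{ki}=\alpha_{jk}\w_{ji}$, and then use the skew-symmetry of $M_B$ to rewrite $\w_{ik}=-\w_{ki}=-\alpha_{jk}\w_{ji}=\alpha_{jk}\w_{ij}$. In particular $\w_{ik}=\alpha_{jk}\w_{ij}\neq0$, so $\Lambda(j)\subseteq D^1(i)$ whenever $j\in D^1(i)$; hence $D^1(i)$ decomposes as a disjoint union of complete classes $\Lambda(j)$ with $j\in D^1(i)$.

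Grouping the sum above by these classes and substituting $\w_{ik}^2\,e_k^2=\alpha_{jk}^2\w_{ij}^2\cdot\alpha_{jk}e_j^2=\alpha_{jk}^3\w_{ij}^2e_j^2$ then yields
\[
e_i^2e_i^2=\sum_{j}\ \w_{ij}^2\,e_j^2\Big(\sum_{k\in\Lambda(j)}\alpha_{jk}^3\Big),
\]
where the outer sum runs over a set of representatives $j\in D^1(i)$ of the classes contained in $D^1(i)$. Since each such $j$ belongs to $D^1(i)$, the hypothesis gives $\sum_{k\in\Lambda(j)}\alpha_{jk}^3=0$, so every summand vanishes and $e_i^2e_i^2=0$.

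I expect the only step needing care to be the identity $\w_{ik}=\alpha_{jk}\w_{ij}$: the relation $e_k^2=\alpha_{jk}e_j^2$ constrains only the rows $k$ and $j$ of the structure matrix, so it is precisely the Volterra (skew-symmetry) hypothesis that allows one to transfer it to the $k$-th column. This is also what makes the exponent $3$ --- and therefore the assumption $\sum_{k\in\Lambda(j)}\alpha_{jk}^3=0$ --- appear naturally; everything else is routine bookkeeping with the natural decomposition.
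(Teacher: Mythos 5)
Your proposal is correct and follows essentially the same route as the paper: expand $e_i^2e_i^2=\sum_{k\in D^1(i)}\w_{ik}^2e_k^2$, group the indices of $D^1(i)$ into complete classes $\Lambda(j)$, use skew-symmetry together with $e_k^2=\alpha_{jk}e_j^2$ to turn each term into $\alpha_{jk}^3\w_{ij}^2e_j^2$ (the paper writes this as $\w_{ki}^2\alpha_{jk}=\alpha_{jk}^3\w_{ji}^2$), and invoke the hypothesis classwise. Your explicit verification that $\Lambda(j)\subseteq D^1(i)$ for $j\in D^1(i)$ is a detail the paper leaves implicit, but the argument is the same.
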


\begin{proof}
Let $M_B=(\w_{ij})$ be the  structure matrix  of $\A$ relative to $B$ and $i \in \Lambda$ such that for any $j\in D^1(i)$ we have that $ \sum_{k \in \Lambda(j)}  \alpha_{jk}^3=0$. Note that there exists $\{\ell_1,\ldots,\ell_t\} \subseteq D^1(i)$ such that $D^1(i)=\Lambda({\ell_1})\cup  \ldots \cup \Lambda({\ell_t})$ with $\Lambda(\ell_h)\cap \Lambda(\ell_g)=\emptyset$ for $h\neq g$. Then 

\[\begin{array}{rclcl}
\displaystyle e_i^2e_i^2
&=& \displaystyle \sum_{k \in \Lambda({\ell_1})} w_{ik}^2 e_k^2+\ldots + \sum_{k \in \Lambda({\ell_t})} w_{ik}^2 e_k^2\\[0.3cm]
&=&\displaystyle \sum_{k \in \Lambda({\ell_1})} w_{ki}^2\alpha_{\ell_1k} e_{\ell_1}^2+\ldots+
\sum_{k \in \Lambda({\ell_t})} w_{ki}^2\alpha_{\ell_tk} e_{\ell_t}^2 \\[0.3cm]
&=& \displaystyle w_{\ell_1i}^2e_{\ell_1}^2\sum_{k \in \Lambda({\ell_1})} \alpha_{\ell_1k}^3+\ldots+
w_{\ell_ti}^2 e_{\ell_t}^2\sum_{k \in \Lambda({\ell_t})} \alpha_{\ell_tk}^3.
\end{array}\]
Since $\ell_{h}\in D^1(i)$ for $h \in \{1,\ldots, t\}$ then by hypothesis $\displaystyle \sum_{k \in \Lambda({\ell_{1}})} \alpha_{\ell_1k}^3=\dots=\sum_{k \in \Lambda({\ell_{t}})} \alpha_{\ell_tk}^3=0.$ Therefore $e_i^2e_i^2=0$.
 \end{proof}

\begin{remark} \rm 
Theorem \ref{diagonalceros} presents conditions for a non-degenerate evolution algebra to have only derivations with zero diagonal. If $\A$ is degenerate, then there always exists $d \in \Der(\A)$ with non-zero diagonal entries. Indeed, let $\A$ be a degenerate evolution algebra with a natural basis $B=\{e_i\}_{i \in \Lambda }$ and $\ell \in \Lambda$ such that $e_{\ell}^2=0$. Consider the linear operator $d=(d_{ij})$ defined by 
$$d_{ij}=\left\{ \begin{array}{ll}
             1, &   \text{ if }   i=j=\ell,\\
              0, &  \text{ if }i \neq \ell \text{ or } j \neq \ell.
             \end{array}
   \right.
   $$
\noindent Then the Equations \eqref{eq:der1} and \eqref{eq:der2} shows that $d \in \Der(\A)$.
The next two results show conditions for a non-degenerate Volterra evolution algebra to have derivations with non-zero diagonal entries
\end{remark}

\begin{remark} \rm
Let $\A$ be  a Volterra evolution algebra with a natural basis $B=\{ e_i\}_{i \in \Lambda}$ and  let $\Lambda=\Lambda_1 \cup  \ldots\cup \Lambda_r$ be a natural decomposition of $\Lambda$ relative to $B$.
Note that if  there exists a path from $i$ to $j$ then $\delta (i,\ell)=\delta(i,j)$ for all $\ell \in \Lambda(j)$. Therefore, $\delta (i,\Lambda(j))=\delta (i,j)$.  Analogously, we have that $\delta (\Lambda(i),\Lambda(j))=\delta (i,j)$.
\end{remark}

\begin{prop} \label{prop:e^4diineq0}
Let $\A$  be a non-degenerate Volterra evolution algebra with a natural basis $B=\{e_i\}_{i \in \Lambda }$  and  $\Lambda=\Lambda_1\cup \ldots \cup \Lambda_r$ a natural decomposition of $\Lambda$ relative to $B$. Assume that the associated graph $E^B_{\A}$ does not have odd length cycles. Moreover, suppose that there is $i \in \Lambda$ such that  $\displaystyle \sum_{k \in \Lambda(j)}  \alpha_{j k}^3=0$ for all  $j \in \Lambda$ with $\delta(i,j)$ even. Then there exists $d=(d_{ij}) \in \Der(\A)$ verifying $d_{kk}\neq 0$ for any $k \in D(i)$.
\end{prop}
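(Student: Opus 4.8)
The plan is to build the derivation $d$ by hand, using the combinatorial structure forced by the no‑odd‑cycle hypothesis together with Proposition \ref{pro:dervolt}, and then to solve a small linear system inside each ``even'' twin‑square block by means of the hypothesis $\sum_k\alpha_{jk}^3=0$. First I would reduce to the case $D(i)=\Lambda$. Since $\A$ is Volterra, $\w_{k\ell}\ne 0\iff\w_{\ell k}\ne 0$ for $k\ne\ell$, so both $D(i)$ and $\Lambda\setminus D(i)$ are closed under $D^1$, and hence $\A=\A_1\oplus\A_2$ is a direct sum of evolution ideals with $\A_1=\spa\{e_k:k\in D(i)\}$. By Corollary \ref{blockmatrix} (or simply by extending a derivation of $\A_1$ by zero on $\A_2$) it suffices to produce $d'\in\Der(\A_1)$ with nonzero diagonal. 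One checks that $\A_1$ inherits all hypotheses: $D(i)$ is a union of blocks of the natural decomposition (by the Remark preceding the statement every $\ell\in\Lambda(j)$ with $j\in D(i)$ has $\delta(i,\ell)=\delta(i,j)<\infty$), paths issuing from $i$ stay inside $D(i)$ so distances are unchanged, and $D_{\A_1}(i)$ is the whole index set of $\A_1$. So from now on I assume $D(i)=\Lambda$.

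Next I would exploit the absence of odd cycles. Every edge has a reverse edge, so concatenating a path $i\to k$ with the reverse of another produces a closed walk at $i$, which decomposes into cycles, all of even length; hence all paths $i\to k$ have the same length parity $\epsilon(k)\in\{0,1\}$, with $\epsilon(k)\equiv\delta(i,k)\pmod 2$ and $\epsilon(i)=0$, every edge joins vertices of opposite parity, and by the Remark $\epsilon$ is constant on each block $\Lambda_t$; I call the block \emph{even} or \emph{odd} accordingly. The hypothesis then reads $\sum_{k\in\Lambda_t}\alpha_{jk}^3=0$ for every $j$ in every even block $\Lambda_t$; taking $k=j$ and using $\alpha_{jj}=1$, no even block can be a singleton.

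I would then define $d$ by: $d_{kk}=1$ if $\epsilon(k)=0$ and $d_{kk}=2$ if $\epsilon(k)=1$; $d_{k\ell}=0$ for $k\ne\ell$ lying in different blocks or in the same odd block; and on an even block $\Lambda_t=\{c_1,\dots,c_m\}$ (with $m\ge 2$), writing $\alpha_{c_pc_q}=\gamma_q/\gamma_p$ for suitable $\gamma_p\in\bK^\times$ (so the hypothesis becomes $P_3:=\sum_p\gamma_p^3=0$), I set $d_{c_pc_s}:=\gamma_p\,z_{ps}$ where
\[z_{sp}:=-\frac{3}{P_k}\big(\gamma_s\gamma_p^{k-2}-\gamma_s^{k-2}\gamma_p\big),\]
$k\ge 2$ being any index with $k\ne 3$ and $P_k:=\sum_p\gamma_p^k\ne 0$. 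Such a $k$ exists: otherwise $P_k=0$ for all $k\ge 2$ would force the rational function $\sum_p(1-\gamma_px)^{-1}$, whose Taylor series at $0$ is $\sum_{k\ge 0}P_kx^k$, to equal the polynomial $m+P_1x$, which is impossible since it has a pole at every $\gamma_p^{-1}$. The array $(z_{sp})$ is antisymmetric — which, by item \ref{pro:dervolt2}, it must be — and a short computation using $P_3=0$ gives $\sum_p\gamma_p^2z_{ps}=3\gamma_s$ for every $s$. The chief difficulty is exactly this: realizing the required constant correction simultaneously for all $j$ in a given even block via an \emph{antisymmetric} array; the naive symmetric choice violates item \ref{pro:dervolt2}, and the ansatz above (with $P_3=0$) is what makes it work.

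Finally I would verify conditions \ref{pro:dervolt1}--\ref{pro:dervolt3} of Proposition \ref{pro:dervolt}. Item \ref{pro:dervolt1} is built in. Item \ref{pro:dervolt2} is trivial inside odd blocks, and inside an even block $d_{c_pc_s}=\gamma_pz_{ps}=-\gamma_pz_{sp}=-\frac{\gamma_p}{\gamma_s}(\gamma_sz_{sp})=-\alpha_{c_sc_p}d_{c_sc_p}$. For item \ref{pro:dervolt3}, put $C_j:=\sum_{k\in\Lambda(j),\,k\ne j}\alpha_{jk}d_{kj}$; then $C_j=0$ if $j$ lies in an odd block, while $C_j=\frac{1}{\gamma_s}\sum_p\gamma_p^2z_{ps}=3$ if $j=c_s$ lies in an even block. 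Hence for $\ell\in D^1(j)$ (so $\epsilon(\ell)\ne\epsilon(j)$), the right‑hand side of item \ref{pro:dervolt3} is $d_{jj}+C_j$, which equals $1+3=4=2d_{\ell\ell}$ when $j$ is even and $2+0=2=2d_{\ell\ell}$ when $j$ is odd — exactly item \ref{pro:dervolt3}. Since $d_{kk}\in\{1,2\}$ for every $k\in\Lambda=D(i)$, the derivation $d$ (extended by zero off $\A_1$) has the required property; the main obstacle, as noted, is the antisymmetric linear system in the even blocks.
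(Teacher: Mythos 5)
Your proof is correct, and its skeleton coincides with the paper's: reduce to $D(i)$ (the paper simply sets the blocks outside $D(i)$ to zero), use the absence of odd cycles to get a well-defined parity on the blocks reachable from $i$, put $d_{kk}=1$ on even blocks and $d_{kk}=2$ on odd ones, and use $\sum_k\alpha_{jk}^3=0$ to arrange $\sum_{k\in\Lambda(j)}\alpha_{jk}d_{kj}=4$ on even blocks. Where you genuinely diverge is in the off-diagonal entries inside an even block $\Lambda_t=\{k_1,\dots,k_s\}$: the paper takes a sparse ``bordered'' block, with $d_{k_sk_j}=3\alpha_{k_sk_j}$ and $d_{k_jk_s}=-3\alpha_{k_sk_j}^2$ for $j<s$ and all other off-diagonal entries zero, which satisfies condition \ref{pro:dervolt2} of Proposition \ref{pro:dervolt} directly and yields $1+3\alpha_{k_jk_s}\alpha_{k_sk_j}=4$ for $j<s$ and $1-3\sum_{j<s}\alpha_{k_sk_j}^3=4$ for $j=s$, using the hypothesis $P_3=0$ only once and only in the last row/column. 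Your denser ansatz $d_{c_pc_s}=\gamma_p z_{ps}$ with $z_{sp}=-\tfrac{3}{P_k}(\gamma_s\gamma_p^{k-2}-\gamma_s^{k-2}\gamma_p)$ also works (the identity $\sum_p\gamma_p^2z_{ps}=3\gamma_s$ and the verification of \ref{pro:dervolt2} both check out), but it costs you an extra existence lemma — that some power sum $P_k$, $k\ge2$, $k\ne3$, is nonzero — whose generating-function/degree argument, while valid in characteristic $0$, is avoidable: the ``difficulty'' you flag about realizing the correction by an antisymmetric array dissolves once one notices that condition \ref{pro:dervolt2} only forces $d_{ij}=-\alpha_{ji}d_{ji}$ (a twisted, not literal, antisymmetry), which the rank-one bordered choice satisfies for free. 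So: correct, same strategy, with a more elaborate but self-contained solution of the linear constraints in the even blocks.
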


\begin{proof} Firstly, we observe that if $i \in \Lambda$ then either $\Lambda_h \subseteq D(i)$ or $\Lambda_h \cap D(i) =\emptyset$ for every $h \in \{1,\ldots,r\}$. Now, we can suppose without loss of generality that there exists a reordering of the natural decomposition in such a way that
$\Lambda=\Lambda_1 \cup  \ldots \cup \Lambda_t\cup \Lambda_{t+1} \cup \ldots \cup \Lambda_{r}$ where  $i \in \Lambda_1$,  $\Lambda_h \subseteq D(i)$ for all $h \in \{1,\dots, t\}$ and $\Lambda_h \cap D(i)=\emptyset$ for all $h \in \{t+1,
\dots, r\}$. 
Let $d$ be the linear map with diagonal matrix $d=\diag({C_1,\ldots,C_r})$
where $C_k \in M_{|\Lambda_k|} (\bbK)$ is defined as follows:
\begin{itemize}
    \item If $k \in \{t+1,\dots, r\} $ then $C_k:=0$.
    \item If $k \in \{1,\dots, t\}$ and $\delta (i,\Lambda_k)$ is odd then $C_k=:2 I_{|\Lambda_k|}$.
    \item If $k \in \{1,\dots, t\}$, $\delta (i,\Lambda_k)$ is even and $\Lambda_k=\{k_1, \dots, k_s\}$ then
    $$C_k:=\left( \begin{array}{ccccc}
    1&0&\dots&0&-3\alpha_{k_sk_1}^2  \\
    0&1&\dots&0&-3\alpha_{k_sk_2}^2  \\
    \vdots&\vdots&\ddots&\vdots &\vdots \\
    0&0&\dots &1&-3\alpha_{k_sk_{s-1}}^2 \\
    3\alpha_{k_sk_1}&3\alpha_{k_sk_2}&\dots &3\alpha_{k_sk_{s-1}}&1
    \end{array} \right).$$
\end{itemize}
To prove that $d=(d_{ij}) \in \Der(\A)$ it is sufficient to verify that $d$ satisfies the conditions of the Proposition \ref{pro:dervolt}. It is clear that the condition  \ref{pro:dervolt1} of Proposition \ref{pro:dervolt} holds.

Now we are going to verify that $d$ satisfies the condition \ref{pro:dervolt2} of Proposition \ref{pro:dervolt}. Let $\ell,j \in \Lambda$, $\ell \neq j$ and $ \{\ell, j\} \subseteq \Lambda_k$. If $k \in \{t+1,\dots,r \}$ or $k \in \{1,\dots,t \}$ and $\delta(i,\Lambda_k)$ is odd, then $d_{\ell j}=d_{j \ell}=0$ by the definition of $d$. Now, observe that if   $\{ \ell ,j \} \subseteq \Lambda_k\setminus \{k_s\}$ where $\Lambda_k=\{k_1, \dots, k_s \}$ and $\delta(i,\Lambda_k)$ is even then  $d_{\ell j}=d_{j\ell}=0$. Finally, if $\{ \ell,j \}\cap \{k_s\}\neq \emptyset$ we can assume without loss of generality that $\ell=k_s$. Then, $$d_{\ell j}=d_{k_sj}=3\alpha_{k_sj}=3\alpha_{k_sj}\alpha_{k_sj}\alpha_{jk_s}=\alpha_{jk_s} 3\alpha_{k_sj}^2=-\alpha_{j\ell}d_{j\ell}.$$

Next, we will show that $d$ verifies the condition \ref{pro:dervolt3}  of Proposition  \ref{pro:dervolt}. First, note that if $k \in \{1,\dots, t\}$ and $\delta(i,\Lambda_k)$ is odd, then $\delta (i, \Lambda_{h})$ is even, for all $\Lambda_{h}$ satisfying $\delta (\Lambda_{h},\Lambda_k)=1$. Indeed, if we suppose that $\delta (i, \Lambda_{k})$  and $\delta (i,\Lambda_{h})$ are odd for some $\Lambda_{h}$ with $\delta (\Lambda_{h},\Lambda_k)=1$ then there are paths
$\mu$ from $i$ to  $j_1$ and $\sigma$ from $i$ to $j_2$, where $j_1\in \Lambda_k$ and $j_2 \in \Lambda_{h}$ such that $|\mu|$ and $|\sigma|$ are odd. As $\delta (j_1, j_2)=1$ then we have a cycle  of length $|\mu|+|\sigma|+1$ which is odd, a contradiction. Analogously, if $k \in \{1,\dots, t\}$ and $\delta(i,\Lambda_k)$ is even, then $\delta (i, \Lambda_{h})$ is odd for all $\Lambda_{h}$ verifying $\delta (\Lambda_{h},\Lambda_k)=1$.
Now, let $\ell, j \in \Lambda$ with $\ell \in D^1(j)$. Observe that $\Lambda(\ell)\cap\Lambda(j)=\emptyset$ because $\A$ is a Volterra evolution algebra and so $\omega_{jj}=0$.
We will distinguish several cases:

\noindent \textbf{Case 1.} If $ \ell \in \Lambda_1\cup  \ldots \cup \Lambda_t $ and $j \in \Lambda_{t+1} \cup \ldots \cup\Lambda_{r}$ then $\ell \not\in D^1(j)$, which is a contradiction.

\noindent \textbf{Case 2.} If $\ell, j \in \Lambda_{t+1} \cup  \ldots \cup \Lambda_{r}$ then $d_{\ell\ell}=d_{\ell j}=d_{j\ell}=0$ by definition and we get what we wanted.

\noindent \textbf{Case 3.} In this case we suppose that  $\delta (i, \Lambda(\ell))$ is even. Necessarily $\delta (i,\Lambda(j))$ is  odd since $\delta(\Lambda(j),\Lambda(\ell))=1$. Thus $d_{\ell \ell}=1$, $d_{jj}=2$  and $d_{kj}=0$ for all $k \in \Lambda\setminus \{j\}$. Therefore
$$2d_{\ell \ell}=2=\alpha_{jj}d_{jj}=\sum_{k \in  \Lambda(j)} \alpha_{jk}d_{kj}.$$

\noindent \textbf{Case 4.} Assume that  $\delta (i, \Lambda(\ell))$ odd. As in the previous case, observe that $\delta (i,\Lambda(j))$ is even. Thus, we have that $d_{\ell \ell}=2$. Let $\Lambda(j)=\{k_1,  \dots, k_s\}$.  We are going to consider two cases:

\noindent  \textbf{Case 4.1.} If $j\neq k_s$ then $d_{jj}=1$, $d_{k_sj}=3\alpha_{k_sj}$ and $d_{kj}=0$ for $k\in \Lambda\setminus \{j,k_s\}$. Then
$$\sum_{k \in \Lambda(j)} \alpha_{jk}d_{kj}=\alpha_{jj}d_{jj}+\alpha_{jk_s}d_{k_sj}=1+\alpha_{jk_s}3\alpha_{k_sj}=4=2d_{\ell \ell}.$$

\noindent \textbf{Case 4.2.} If $j=k_s$ then $d_{k_sk_s}=1$ and $d_{kk_s}=-3\alpha_{k_sk}^2$ for $k\in \{k_1,  \dots, k_{s-1}\}$. Then
$$\displaystyle \sum_{k \in \Lambda(j)} \alpha_{k_sk}d_{kk_s}= \alpha_{k_sk_s}d_{k_sk_s} - \sum_{\substack{k \in \Lambda(j) \\ k\neq k_s}}\alpha_{k_sk}3\alpha_{k_sk}^2=1  -3 \sum_{\substack{k \in \Lambda(j) \\ k\neq k_s}}\alpha_{k_sk}^3.$$
Since $\displaystyle \sum_{k \in \Lambda(j)}  \alpha_{k_s k}^3=0$  we get that
$ \sum_{\substack{k \in \Lambda(j) \\ k\neq k_s}}  \alpha_{k_sk}^3=-1$. Therefore,
$$\sum_{k \in \Lambda(j)} \alpha_{k_sk}d_{kk_s}=1-3\sum_{\substack{k \in \Lambda(j) \\ k\neq k_s}}\alpha_{k_sk}^3=4=2d_{\ell \ell}.$$
\end{proof}

The condition of having odd length cycles can not be eliminated as shows the following example.

\begin{exa}\rm
Let $\A$ an evolution algebra with $B=\{e_i\}_{i\in \Lambda}$ ($\Lambda=\{1,\ldots,7\}$)  natural basis and multiplication table $e_1^2=e_2-e_3+e_4-e_5+e_6-e_7$, $e_2^2=-e_3^2=-e_1+e_4-e_5$, $e_4^2=-e_5^2=-e_1-e_2+e_3$ and $e_6^2=-e_7^2=-e_1$.
Since that $M_B$ is skew-symmetric, then $\A$ is a Volterra evolution algebra. 
Consider a natural decomposition of $\Lambda=\Lambda_1\cup \Lambda_2\cup\Lambda_3\cup \Lambda_4$ relative to $B$, where $\Lambda_1=\{1\}$, $\Lambda_2=\{2,3\}$, $\Lambda_3=\{4,5\}$ and $\Lambda_4=\{6,7\}$. Note that $7\in \Lambda$ is such that $\sum_{k \in \Lambda_g}\alpha^3_{jk}=0$ for all $j \in \Lambda (j \in \Lambda_g)$ with $\delta(7,j)$ even. Therefore all hypotheses of Theorem \ref{prop:e^4diineq0}  are verified, except that $E_{\A}^B$ has odd length cycles. Furthermore, an easy computation shows that $\Der(\A)=\{0 \}$. 
\end{exa}

\begin{prop} \label{prop:alpha3=0}
Let $\A$ be a non-degenerate Volterra evolution algebra with a natural basis $B=\{e_i\}_{i \in \Lambda }$, $\Lambda=\Lambda_1\cup \ldots \cup \Lambda_r$ a natural decomposition of $\Lambda$ relative to $B$ and structure matrix $M_B=(\w_{ij})$. If there is $i \in \Lambda$ such that $\displaystyle \sum_{k \in \Lambda_{g}}  \alpha_{jk}^3=0$  for all  $j \in D(i)$ where $i\in \Lambda_{g},$
then there exists $d=(d_{ij}) \in \Der(\A)$ such that $d_{kk}\neq 0$ for all $k \in D(i)$.
\end{prop}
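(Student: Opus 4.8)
The plan is to follow the template of the proof of Proposition~\ref{prop:e^4diineq0}, but, since now the hypothesis holds on \emph{all} of $D(i)$, to replace the two alternating families of blocks used there (which require a bipartite colouring of the parts, and hence the absence of odd cycles) by a single ``self-consistent'' block attached to every part meeting $D(i)$. As $\A$ is non-degenerate, the natural decomposition has the form $\Lambda=\Lambda_1\cup\cdots\cup\Lambda_r$, and, exactly as in the opening of the proof of Proposition~\ref{prop:e^4diineq0}, for each $h$ one has either $\Lambda_h\subseteq D(i)$ or $\Lambda_h\cap D(i)=\emptyset$; after reordering I may assume $\Lambda_h\subseteq D(i)$ for $h\le t$ and $\Lambda_h\cap D(i)=\emptyset$ for $h>t$. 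I will use repeatedly that, for $u,v$ in a common part, $\alpha_{uv}\alpha_{vu}=1$ and $\alpha_{uk}=\alpha_{uv}\alpha_{vk}$ (both by non-degeneracy); the second gives $\sum_{k\in\Lambda_h}\alpha_{uk}^3=\alpha_{uv}^3\sum_{k\in\Lambda_h}\alpha_{vk}^3$, so that the hypothesis is equivalent to $\sum_{k\in\Lambda_h}\alpha_{jk}^3=0$ for every $h\in\{1,\dots,t\}$ and every $j\in\Lambda_h$.

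Next I would define $d=\diag(C_1,\dots,C_r)$ by $C_h=0$ for $h>t$ and, for $h\le t$ with $\Lambda_h=\{h_1,\dots,h_s\}$, letting $C_h$ be the block whose diagonal entries all equal $1$, whose $(h_s,h_m)$-entry is $\alpha_{h_sh_m}$ and whose $(h_m,h_s)$-entry is $-\alpha_{h_sh_m}^2$ for $1\le m\le s-1$, all remaining entries being $0$ (so $C_h=(1)$ if $s=1$). Since by construction $d_{kk}=1\neq 0$ for every $k\in D(i)$, the conclusion is immediate once I have checked that $d\in\Der(\A)$, and for this I would verify the three conditions of Proposition~\ref{pro:dervolt}.

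Conditions~\ref{pro:dervolt1} and~\ref{pro:dervolt2} are quick: $d$ is block diagonal with blocks carried by the parts $\Lambda_h$, so~\ref{pro:dervolt1} holds, and for~\ref{pro:dervolt2} one only has to look at $u\ne v$ inside one part $\Lambda_h$ with $h\le t$, where the single nontrivial case $\{u,v\}=\{h_m,h_s\}$ gives $d_{h_mh_s}=-\alpha_{h_sh_m}^2=-\alpha_{h_sh_m}d_{h_sh_m}$, i.e. $d_{uv}=-\alpha_{vu}d_{vu}$, using $\alpha_{h_mh_s}\alpha_{h_sh_m}=1$. The substantive point is~\ref{pro:dervolt3}: given $\ell\in D^1(j)$, skew-symmetry of $M_B$ forces $j\in D^1(\ell)$, hence $\ell\in D(i)\iff j\in D(i)$; if $j\notin D(i)$ both sides are zero, and if $j\in D(i)$, writing $j=h_m\in\Lambda_h$, one has $d_{\ell\ell}=1$ and $\Lambda(j)=\Lambda_h$, and a short computation of $\sum_{k\in\Lambda_h}\alpha_{jk}d_{kj}$ yields $1+\alpha_{h_mh_s}\alpha_{h_sh_m}=2$ if $m<s$ and $1-\sum_{u<s}\alpha_{h_sh_u}^3$ if $m=s$; in the second case the hypothesis $\sum_{u\le s}\alpha_{h_sh_u}^3=0$ (with $\alpha_{h_sh_s}=1$) gives $\sum_{u<s}\alpha_{h_sh_u}^3=-1$, so again the sum is $2=2d_{\ell\ell}$. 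Thus $d\in\Der(\A)$.

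The only genuinely delicate step is choosing the block $C_h$. In Proposition~\ref{prop:e^4diineq0} the diagonal-$1$ blocks could afford to be surrounded by $2I$-blocks, an option not available without a $2$-colouring of the parts; here the block must instead be \emph{self-consistent}, its constant diagonal value matching the value of $d_{\ell\ell}$ that condition~\ref{pro:dervolt3} imposes on its neighbours. This is what forces the coefficients $\alpha_{h_sh_m}$ and $-\alpha_{h_sh_m}^2$ in $C_h$ (in place of the $3\alpha$'s of Proposition~\ref{prop:e^4diineq0}), and it is exactly at the index $j=h_s$ that the full-strength hypothesis $\sum_{k\in\Lambda_h}\alpha_{jk}^3=0$, rather than its ``even-distance'' weakening, is needed. (One may also note, via Proposition~\ref{prop:sumalpha3=0}, that the hypothesis implies $e_k^2e_k^2=0$ for all $k\in D(i)$, so that Theorem~\ref{diagonalceros} does not apply and there is no clash with the existence of such a $d$.)
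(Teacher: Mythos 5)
Your proposal is correct and follows essentially the same route as the paper: the same reordering with $D(i)=\Lambda_1\cup\ldots\cup\Lambda_t$, the same block-diagonal derivation (zero blocks off $D(i)$, and on each part the block with diagonal $1$, last row $\alpha_{k_sk_m}$, last column $-\alpha_{k_sk_m}^2$), verified through the three conditions of Proposition \ref{pro:dervolt}, with the hypothesis $\sum_{k\in\Lambda(j)}\alpha_{jk}^3=0$ used exactly at the index $k_s$. Your added remarks (equivalence of the hypothesis under rescaling, the consistency check via Proposition \ref{prop:sumalpha3=0} and Theorem \ref{diagonalceros}) are fine but not needed.
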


\begin{proof} First, we can reorder the natural decomposition in such a way that $\Lambda=\Lambda_1 \cup \ldots \cup \Lambda_t\cup \Lambda_{t+1} \cup \ldots \cup \Lambda_{r}$, with $D(i)=\Lambda_1  \cup \ldots \cup \Lambda_t$. Define $d=(d_{ij})$  as the diagonal matrix $\diag(C_1,\ldots,C_r)$
where $C_k \in M_{|\Lambda_k|} (\bbK)$ is defined as below:
\begin{itemize}
    \item If $k \in \{t+1, \dots, r\}$ then $C_k:=0$.
    \item If $k \in \{1, \dots, t\}$ and $\Lambda_k=\{k_1, \dots, k_s\}$ then
    $$C_k:=\left( \begin{array}{ccccc}
    1&0&\dots&0&-\alpha_{k_sk_1}^2  \\
    0&1&\dots&0&-\alpha_{k_sk_2}^2  \\
    \vdots&\vdots&\ddots&\vdots &\vdots \\
    0&0&\dots &1&-\alpha_{k_sk_{s-1}}^2 \\
    \alpha_{k_sk_1}&\alpha_{k_sk_2}&\dots &\alpha_{k_sk_{s-1}}&1
    \end{array} \right).$$
\end{itemize}

In order to see that $d$ is a derivation, we will prove that it satisfies the conditions \ref{pro:dervolt1}-\ref{pro:dervolt3} of Proposition \ref{pro:dervolt}. Note that $d$ verifies Proposition \ref{pro:dervolt} \ref{pro:dervolt1} by definition.

Let $p,q \in \Lambda$, $p \neq q$ and $\{p,q\} \subseteq \Lambda_{h}$ for  $h \in \{1,\dots, r\}$. If $h > t$ then $d_{p q}=d_{q p }=0$.
If $h \leq t $, we consider two cases.

\noindent \textbf{Case 1.} If $\{p,q \} \subseteq \Lambda_{h}\setminus \{k_s\}$ then $d_{p q}=d_{qp}=0$.

\noindent \textbf{Case 2.} If $\{p,q \} \cap \{k_s\} \neq \emptyset$, assume, without loss of generality, that $p=k_s$.
 Then $$d_{p q}=d_{k_sq}=\alpha_{k_sq}=\alpha_{k_sq}(\alpha_{k_sq}\alpha_{qk_s})=\alpha_{qk_s} (\alpha_{k_sq}^2)=\alpha_{qk_s} (-d_{qk_s})=-\alpha_{qp}d_{qp}.$$

Now, let $p,q \in \Lambda$, $q \in \Lambda(q)=\Lambda_{h}$ for some $h \in \{1,\ldots, r\}$ and $p \in D^1(q)$.

If $h > t$, then $d_{p p }=d_{kq}=0$ for all $k \in \Lambda_{h}$ since $p\notin D(i)$ otherwise $q \in D(i)$ contradiction. So the condition \ref{pro:dervolt2} of Proposition \ref{pro:dervolt} is verified.

If $h \leq t$  therefore $d_{qq}=1$. Moreover, $p\in D(i)$ then $d_{pp}=1$. Writing $\Lambda_{h} =\{k_1,k_2,\dots, k_s\}$, we will distinguish two cases.

\noindent \textbf{Case 1.} If $q\neq k_s$ then $$\sum_{k \in \Lambda_{h}} \alpha_{qk}d_{kq}=d_{qq}+\alpha_{qk_s}d_{k_sq}=d_{qq}+\alpha_{qk_s}\alpha_{k_sq}=2.$$

\noindent \textbf{Case 2.} If $q = k_s$ then
$$\sum_{k \in \Lambda_{h}} \alpha_{k_sk}d_{kk_s}=
1+\sum_{\substack{k \in \Lambda_{h} \\ k \neq  k_s}} \alpha_{k_sk}(-\alpha_{k_sk}^2)=
1-\sum_{\substack{k \in \Lambda_{h} \\ k \neq  k_s}} \alpha_{k_sk}^3=2.$$

In both cases, we have that $
    2d_{pp}=\sum_{k \in \Lambda_{h}} \alpha_{qk}d_{kq}.$
\end{proof}

\section{The loops of an evolution algebra}\label{loops}

By analogy with graph theory, we define when an element of a natural basis is called loop and then we start by studying what properties of this set is invariant under change of natural basis.

\begin{defn} \label{loop}\rm
Let $\A$ be an evolution algebra with a natural basis $B=\{e_i\}_{i\in \Lambda}$ and structure matrix $M_B=(\omega_{ij})$. We say that $e_i$ is a \emph{loop relative to the basis} $B$ if $\omega_{ii}\not=0$. Otherwise, we say that $e_i$ is a \emph{no-loop}. We denote by $\loops(\A,B)$ the set of loops and by $\nloops(\A,B):=B\setminus \loops(\A,B)$ the set of no-loops.
\end{defn}

\begin{remark}\label{equi_loops} \rm
If $\A$ is an evolution algebra with a natural basis $B=\{e_i\}_{i\in \Lambda}$ then the following conditions are equivalent:
\begin{itemize}
\item $\loops(\A,B)=\emptyset.$
\item $i \notin \supp(e^{2}_i)$ for all $i \in \Lambda$.
\item $e_ie^{2}_i=0$ for all $i\in \Lambda$.
\end{itemize}
\end{remark}

\begin{theorem} \label{ceros}
Let $\A$ be an evolution algebra with natural basis  $B$ and $B'$. Let $B=B_0\cup \ldots \cup B_r$ and $B'=B'_0\cup \ldots \cup B'_r$ be natural decomposition of  $B$ and $B'$ where $B'$ is reordered in such a way that $B'_0=B_0$ and $B'_t  \subseteq \spa( B_0 \cup B_t)$. Then $B_t \subseteq \nloops(\A,B)$ if and only if $B'_t \subseteq \nloops(\A,B')$.
\end{theorem}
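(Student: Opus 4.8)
\emph{Proof plan.} The strategy is to give a description of the condition ``$B_t\subseteq\nloops(\A,B)$'' that does not involve the basis $B$, and then transport it to $B'$. The case $t=0$ is immediate: $B_0\subseteq\spa(B_0)=\ann(\A)$, so every $e_i\in B_0$ has $e_i^2=0$, hence $\omega_{ii}=0$ and $e_i\in\nloops(\A,B)$; likewise $B'_0\subseteq\nloops(\A,B')$, so both sides hold. Fix now $t\in\{1,\dots,r\}$ and put $\Lambda_t=\{i\in\Lambda:e_i\in B_t\}$. By Remark~\ref{decom} one has $\rk(B_t)=1$ and $B_t\cap\ann(\A)=\emptyset$; therefore $e_i^2\neq 0$ for every $e_i\in B_t$, and $L:=\spa\{e_i^2:e_i\in B_t\}$ is one-dimensional, say $L=\bbK u$, with each $e_i^2$ a nonzero scalar multiple of $u$.

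The first step is to show that
\[
B_t\subseteq\nloops(\A,B)\iff u\in\spa(B\setminus B_t).
\]
By Definition~\ref{loop}, $e_i$ is a no-loop if and only if $\omega_{ii}=0$, i.e. $i\notin\supp_B(e_i^2)$; since $e_i^2$ is a nonzero multiple of $u$, $\supp_B(e_i^2)=\supp_B(u)$, so this says $i\notin\supp_B(u)$. Hence $B_t\subseteq\nloops(\A,B)$ iff $\Lambda_t\cap\supp_B(u)=\emptyset$, which is precisely the condition $u\in\spa(B\setminus B_t)=\bigoplus_{s\neq t}\spa(B_s)$.

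The second and main step is the linear-algebra identity $\bigoplus_{s\neq t}\spa(B_s)=\bigoplus_{s\neq t}\spa(B'_s)$, with $s$ ranging over $\{0,\dots,r\}\setminus\{t\}$. Write $V_s=\spa(B_s)$ and $V'_s=\spa(B'_s)$, so $V_0=V'_0=\ann(\A)$. First I would check $V_0\oplus V_t=V_0\oplus V'_t$: by Remark~\ref{unicidaddes}, $|B'_t|=|B_t|$ and $B'_t\subseteq\spa(B_0\cup B_t)$, so $B'_0\cup B'_t$ is a linearly independent subset of $V_0\oplus V_t$ with $|B_0|+|B_t|=\dim(V_0\oplus V_t)$ elements, hence a basis of $V_0\oplus V_t$; as $V'_0=V_0$ this gives the claim. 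Now pass to $\overline\A:=\A/\ann(\A)$ with projection $\pi$. Since $\bigoplus_{s\geq 1}V_s$ and $\bigoplus_{s\geq 1}V'_s$ are complements of $V_0$ in $\A$, we get $\overline\A=\bigoplus_{s\geq 1}\pi(V_s)=\bigoplus_{s\geq 1}\pi(V'_s)$, and applying $\pi$ to $V_0\oplus V_t=V_0\oplus V'_t$ yields $\pi(V_t)=\pi(V'_t)$ for every $t\geq 1$. Equal summands give equal partial sums, whence $\bigoplus_{s\neq t,\,s\geq 1}\pi(V_s)=\bigoplus_{s\neq t,\,s\geq 1}\pi(V'_s)$; taking preimages under $\pi$ and using $V_0=V'_0$ gives the stated identity.

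To conclude: for $f\in B'_t$ write $f=b_0+w$ with $b_0\in\ann(\A)$ and $w\in V_t$ (possible since $B'_t\subseteq\spa(B_0\cup B_t)$); then $f^2=w^2\in\bbK u$, and $f^2\neq 0$ because $f\notin\ann(\A)$, so $\spa\{f^2:f\in B'_t\}=\bbK u$ as well, i.e. the very same line $L$. Running the first step for the natural decomposition of $B'$ (with the same $u$) gives $B'_t\subseteq\nloops(\A,B')\iff u\in\spa(B'\setminus B'_t)=\bigoplus_{s\neq t}V'_s$, and by the second step this is equivalent to $u\in\bigoplus_{s\neq t}V_s=\spa(B\setminus B_t)$, i.e. to $B_t\subseteq\nloops(\A,B)$. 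The point I expect to require the most care is the second step: in general $V_t\neq V'_t$ (only the coarser equality $V_0\oplus V_t=V_0\oplus V'_t$ holds), so the two decompositions of $\A$ need not agree termwise, and it is the quotient by the common annihilator that promotes the coarse coincidence to the identity of partial sums used above.
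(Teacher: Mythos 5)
Your proof is correct, and it takes a genuinely different route from the paper's. The paper argues by contradiction with a direct product computation: assuming $B_t\subseteq\nloops(\A,B)$ and $f_j\in B'_t\cap\loops(\A,B')$, it writes $f_j\in\spa(B_0\cup B_t)$, uses that all squares in $B_t$ are proportional to a fixed $e_\ell^2$ to get $f_j^2=\beta e_\ell^2$, and then expands $0\neq f_jf_j^2$ to produce an index $q\in\Lambda_t\cap D^1(\ell)=\Lambda_t\cap D^1(q)$, i.e.\ a loop inside $B_t$; the converse direction is declared analogous. You instead make the no-loop condition basis-invariant: with $L=\bbK u$ the common line spanned by the squares of $B_t$ (and, as you check via $f=b_0+w$, also of $B'_t$), the condition $B_t\subseteq\nloops(\A,B)$ becomes $u\in\spa(B\setminus B_t)$, and your key lemma --- $\spa(B\setminus B_t)=\spa(B'\setminus B'_t)$, obtained from $V_0\oplus V_s=V_0\oplus V'_s$ for all $s\geq 1$ together with the passage to $\A/\ann(\A)$ --- turns the two sides of the equivalence into literally the same statement, so both implications come at once, with no contradiction argument and no asymmetry between $B$ and $B'$. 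The raw ingredients coincide (Remark \ref{unicidaddes}, proportionality of squares within a block, and the fact that a natural basis vector outside $\ann(\A)$ has nonzero square), but your version isolates a reusable invariant of the natural decomposition (the block-complement subspaces modulo the annihilator), whereas the paper's is a shorter ad hoc computation with the products $f_jf_j^2$ and the sets $D^1$. One incidental point: you read Remark \ref{decom} with its intended meaning (squares proportional within each $B_t$, $t\geq 1$, and pairwise independent across different blocks); the displayed rank conditions there have the two cases inadvertently swapped, so your reading is the correct one and your step ``$L$ is one-dimensional'' is justified.
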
 

\begin{proof} Let $B=\{e_i\}_{i\in \Lambda}$ and $B'=\{f_i\}_{i\in \Lambda}$ and $M_B=(\omega_{ij})$ and $M_{B'}=(\omega'_{ij})$ the corresponding structure matrices. For $B_0$ the affirmation is trivial. Let $B_t$ with $t\not=0$ such that $B_t \subseteq \nloops(\A,B)$. Suppose, contrary to our claim, that there exists $f_{j}\in B'_t \cap \loops(\A,B')$. Then according to Remark \ref{equi_loops} we have that  $f_{j}^2f_{j}=\omega_{jj}'f_{j}^2\neq 0.$ Let $e_{\ell}\in B_t$ and $\alpha_{{{\ell}}k} \in \mathbb{K}^{\times}$ for $k\in \Lambda_t$ such that $e_{k}^2=\alpha_{{{\ell}}k}e_{{\ell}}^2$. Then $D^1(k)=D^1({\ell})$, for $k\in \Lambda_t$. On the other hand, using Remarks \ref{decom} and \ref{unicidaddes} we can write $f_{j}=\sum_{k \in \Lambda_t}x_ke_k+\sum_{k \in \Lambda_0}x_ke_k$. Then  
$$ f_{j}^2=\sum_{k \in\Lambda_t}x_k^2e_k^2
=\sum_{k \in\Lambda_t}x_k^2\alpha_{{\ell}k}e_{{\ell}}^2=\beta \sum_{p \in D^1({\ell})} \omega_{{\ell}p}e_p,$$
where $\beta=\sum_{k \in\Lambda_t}x_k^2\alpha_{{\ell}k}$.
Therefore
$$
0\neq f_{j}^2f_{j}=f_{j}^2\left(\sum_{k \in \Lambda_t}x_ke_k+\sum_{k \in \Lambda_0}x_ke_k \right)=f_{j}^2\sum_{k \in \Lambda_t}x_ke_k
=  \beta \left(\sum_{p \in D^1({\ell})} \omega_{{\ell}p}e_p\right) \left(\sum_{k \in \Lambda_t}x_ke_k \right).
$$
Thus there exists $e_{q} \in B_t$ such that $q \in D^1(\ell)=D^1(q)$, that is, $e_{q} \in \loops(\A,B)$, contrary to our assumption. The proof of the reciprocal statement is analogous.
\end{proof}

\begin{corollary}\label{coro:loopsperfect}
Let $\A$ be an evolution algebra. Assume that there exists a natural basis $B$ of $\A$ satisfying some of the following conditions:
\begin{enumerate}[label=(\roman*)]
    \item $\A$ has Property $\rm{(2LI)}$.
    \item $\loops(\A,B)=\emptyset$. 
    \item $\A^2=\A$.
    \item $\A$ is twin-free relative to $B$. 
    \item $\A$ is a Volterra evolution algebra relative to $B$.
\end{enumerate}
Then $|\loops(\A,B)|=|\loops(\A,B')|$ for all natural basis $B'$.
\end{corollary}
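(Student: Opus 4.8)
The plan is to reduce everything to a block-by-block comparison and then invoke Theorem \ref{ceros}. Fix a second natural basis $B'$; by Remark \ref{unicidaddes} we may reorder the natural decompositions so that $B=B_0\cup\dots\cup B_r$, $B'=B'_0\cup\dots\cup B'_r$ with $B'_0=B_0$, $B'_t\subseteq\spa(B_0\cup B_t)$, and $|B_t|=|B'_t|$ for every $t$. Since the $B_t$'s partition $B$ and the $B'_t$'s partition $B'$, it suffices to check that $|\loops(\A,B)\cap B_t|=|\loops(\A,B')\cap B'_t|$ for each $t$. Theorem \ref{ceros} already gives one half of this: $B_t\subseteq\nloops(\A,B)$ if and only if $B'_t\subseteq\nloops(\A,B')$, so a block contributing no loop on one side contributes none on the other. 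The entire work is therefore to control the blocks that do contain a loop, and the five hypotheses are exactly what makes this possible.

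I would first treat (v) and (ii). If $\A$ is a Volterra evolution algebra relative to $B$, then $M_B$ is skew-symmetric, and since ${\rm char}(\bK)=0$ its diagonal vanishes, so $\w_{ii}=0$ for all $i$, that is, $\loops(\A,B)=\emptyset$; thus (v) is a special case of (ii). Under (ii) every block satisfies $B_t\subseteq\nloops(\A,B)$, so by Theorem \ref{ceros} every $B'_t\subseteq\nloops(\A,B')$, whence $\loops(\A,B')=\emptyset$ too, and both numbers equal $0$.

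For (i), (iii) and (iv) the key observation is that each of them forces every block $B_t$ ($0\le t\le r$) to have at most one element. Indeed, if $i\ne j$ lie in the same $B_t$ with $t\ge 1$, then $e_i^2=\alpha_{ji}e_j^2$ with $\alpha_{ji}\in\bK^\times$ by \eqref{alphas}, so $\{e_i^2,e_j^2\}$ is linearly dependent and $D^1(i)=D^1(j)$; if $i\ne j$ lie in $B_0$, then $e_i^2=e_j^2=0$, so again $\{e_i^2,e_j^2\}$ is linearly dependent and $D^1(i)=D^1(j)=\emptyset$. Linear dependence of $\{e_i^2,e_j^2\}$ contradicts Property (2LI) in case (i) and in case (iii) (perfect algebras enjoy Property (2LI)), and $D^1(i)=D^1(j)$ contradicts twin-freeness relative to $B$ in case (iv). Hence $|B_t|\le 1$, and since $|B'_t|=|B_t|$ the same holds for $B'$. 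A block of size $0$ contributes $0$ on both sides; if $B_t=\{e_i\}$, then $B'_t=\{f\}$ for a single $f$, and Theorem \ref{ceros} gives $e_i\in\nloops(\A,B)\iff f\in\nloops(\A,B')$, equivalently $e_i\in\loops(\A,B)\iff f\in\loops(\A,B')$, so the two contributions coincide. Summing over $t$ yields $|\loops(\A,B)|=|\loops(\A,B')|$.

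I expect the only genuinely delicate point to be the bookkeeping in the last paragraph: Theorem \ref{ceros} is phrased in terms of the set $\nloops$, so to turn it into an equality of cardinalities of loop-sets one must know that each block is ``entirely loops or entirely no-loops'', and hypotheses (i), (iii), (iv) arrange this in the strongest possible way, by making the blocks singletons, while (ii), and hence (v), sidesteps it because there is nothing to count. Apart from applying Theorem \ref{ceros} and Remark \ref{unicidaddes}, no real computation is involved.
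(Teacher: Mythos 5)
Your proof is correct and follows essentially the intended route: the paper states this corollary without a separate proof precisely because it is meant to follow from Theorem \ref{ceros} together with the observations you make — that (ii) and (v) force every block into $\nloops(\A,B)$, while (i), (iii) (perfect implies (2LI)) and (iv) force all blocks of the natural decomposition to be singletons, after which Remark \ref{unicidaddes} and the block-wise equivalence of Theorem \ref{ceros} give the equality of loop counts.
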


\begin{corollary}\label{coro:loopsperfect2}
Let $\A$ be an evolution algebra with natural basis $B$ and $B'$. Then $\loops(\A,B)=\emptyset$ if and only if $\loops(\A,B')=\emptyset$. 
\end{corollary}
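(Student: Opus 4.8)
The plan is to derive this directly from Theorem \ref{ceros}, using Remark \ref{unicidaddes} to put the two natural decompositions into compatible form. The first observation I would record is that, for any natural basis $B$ with natural decomposition $B = B_0 \cup B_1 \cup \ldots \cup B_r$, one has $\loops(\A,B) = \emptyset$ if and only if $B_t \subseteq \nloops(\A,B)$ for every $t \in \{0,1,\ldots,r\}$. This is immediate from the definitions: $\loops(\A,B) = \emptyset$ means $B = \nloops(\A,B)$, and since $B$ is the disjoint union of the $B_t$, this is the same as each piece lying in $\nloops(\A,B)$. I would also note in passing that $B_0 \subseteq \nloops(\A,B)$ always holds, because any $e_i \in B_0$ lies in $\ann(\A)$, so $e_i^2 = 0$ and in particular $\omega_{ii} = 0$.

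Next, given two natural bases $B$ and $B'$, I would use Remark \ref{unicidaddes} to choose natural decompositions $B = B_0 \cup \ldots \cup B_r$ and $B' = B'_0 \cup \ldots \cup B'_r$ with the same number $r$ of pieces, reordered so that $B'_0 = B_0$ and $B'_t \subseteq \spa(B_0 \cup B_t)$ for every $t$. This is exactly the situation in which Theorem \ref{ceros} is stated, so for each $t$ we have $B_t \subseteq \nloops(\A,B)$ if and only if $B'_t \subseteq \nloops(\A,B')$.

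Combining the two steps finishes the argument: $\loops(\A,B) = \emptyset$ holds iff $B_t \subseteq \nloops(\A,B)$ for all $t$, iff $B'_t \subseteq \nloops(\A,B')$ for all $t$ by Theorem \ref{ceros}, iff $\loops(\A,B') = \emptyset$. There is no real obstacle here --- the statement is a clean corollary, and the only point needing attention is to take the decomposition of $B'$ in the reordered form required by Theorem \ref{ceros}, which Remark \ref{unicidaddes} provides. Equivalently, the claim is just the $\loops(\A,B)=\emptyset$ case of Corollary \ref{coro:loopsperfect} together with its symmetric counterpart obtained by exchanging $B$ and $B'$.
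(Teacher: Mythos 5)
Your argument is correct and is exactly the paper's intended route: the corollary is read off from Theorem \ref{ceros} (equivalently, from the $\loops(\A,B)=\emptyset$ case of Corollary \ref{coro:loopsperfect}), applied piecewise to the natural decompositions matched up via Remark \ref{unicidaddes}, together with the trivial observation that elements of $B_0$ and $B'_0$ square to zero and hence are never loops.
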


However, the number of loops is not an invariant under the natural base change as the following example shows.

\begin{exa} \rm
Consider the evolution algebra $\A$ with  natural basis $\{e_i\}_{i=1}^3$ such that  $e_1^2=e_2^2=e_2+e_3$ and $e_3^2=e_1-e_2+e_3$. Observe that $\loops(\A,B)= \{ e_2,e_3\}$. If we take another natural basis $B'=\{f_i\}_{i=1}^3$ with $f_1=e_1+e_2$, $f_2=e_3$ and $f_3=e_1-e_2$ then $\loops (\A,B')=B'$.
\end{exa}

\begin{theorem}\label{theo:invariance_of_loops}
Let $\A$ be an evolution algebra with a natural basis $B=\{e_i\}_{i\in \Lambda}$ and  $\Lambda=\Lambda_0\cup \Lambda_1\cup\ldots \cup \Lambda_r$ be a natural decomposition of $\Lambda$ relative to $B$. If for all $t\in  \{1,\ldots,r\}$ such that $|\Lambda_t|>1$ we have that  $B_t \subseteq \nloops(\A,B)$   
then $\vert \loops(\A,B)\vert = \vert \loops(\A,B') \vert$ for any natural basis $B'$ of $\A$.
\end{theorem}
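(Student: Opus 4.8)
The plan is to read off both loop counts from the singleton blocks of a natural decomposition, using Theorem~\ref{ceros} to transport the no-loop property across bases; no computation with structure constants should be needed once Theorem~\ref{ceros} is in hand.

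First I would fix an arbitrary natural basis $B'=\{f_i\}_{i\in\Lambda}$ of $\A$ and, by Remark~\ref{unicidaddes}, choose a natural decomposition $\Lambda=\Lambda'_0\cup\Lambda'_1\cup\cdots\cup\Lambda'_r$ of $B'$, reordered so that $\spa(B_0)=\spa(B'_0)$, $B'_t\subseteq\spa(B_0\cup B_t)$, and $|B_t|=|B'_t|$ for every $t\in\{1,\dots,r\}$; in particular $|\Lambda_t|>1$ if and only if $|\Lambda'_t|>1$. Since $\spa(B_0)=\ann(\A)$, each $e_k$ with $k\in\Lambda_0$ has $e_k^2=0$, hence $\omega_{kk}=0$ and $e_k\in\nloops(\A,B)$; likewise $B'_0\subseteq\nloops(\A,B')$. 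So $\loops(\A,B)\subseteq\bigcup_{t=1}^{r}B_t$ and $\loops(\A,B')\subseteq\bigcup_{t=1}^{r}B'_t$.

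Next I would split the remaining blocks into two groups. If $|\Lambda_t|>1$, the hypothesis gives $B_t\subseteq\nloops(\A,B)$, so Theorem~\ref{ceros} yields $B'_t\subseteq\nloops(\A,B')$; thus such blocks contribute no loops on either side. If $|\Lambda_t|=1$, write $B_t=\{e_{i_t}\}$ and $B'_t=\{f_{j_t}\}$, which is again a singleton because $|B'_t|=|B_t|=1$. Here the condition ``$B_t\subseteq\nloops(\A,B)$'' simply reads ``$e_{i_t}\in\nloops(\A,B)$'', so Theorem~\ref{ceros} applied to this index $t$ says that $e_{i_t}$ is a no-loop relative to $B$ if and only if $f_{j_t}$ is a no-loop relative to $B'$; equivalently, $e_{i_t}\in\loops(\A,B)$ if and only if $f_{j_t}\in\loops(\A,B')$.

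Putting the pieces together, $|\loops(\A,B)|$ is exactly the number of indices $t\in\{1,\dots,r\}$ with $|\Lambda_t|=1$ for which $e_{i_t}$ is a loop, and by the previous paragraph this equals the number of indices $t$ with $|\Lambda'_t|=1$ for which $f_{j_t}$ is a loop, that is, $|\loops(\A,B')|$. Hence $|\loops(\A,B)|=|\loops(\A,B')|$. I do not expect any genuine obstacle: the whole argument is a bookkeeping of Theorem~\ref{ceros} over the blocks of the decomposition, and the only point needing a line of care is that the reordering of Remark~\ref{unicidaddes} is compatible with the hypothesis on the blocks with $|\Lambda_t|>1$, which is automatic because $|B_t|=|B'_t|$. (If one preferred to avoid Theorem~\ref{ceros} for the singleton case, the same conclusion follows by a short direct computation: writing $f_{j_t}=x e_{i_t}+w$ with $w\in\spa(B_0)$ and $x\neq0$, the vanishing of all $\Lambda_0$-indexed squares gives $f_{j_t}^2=x^2 e_{i_t}^2$ and then $f_{j_t}f_{j_t}^2=x^3\,e_{i_t}e_{i_t}^2$, so by the pointwise form of Remark~\ref{equi_loops} the loop status is preserved.)
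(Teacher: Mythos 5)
Your proposal is correct and takes essentially the same route as the paper: both confine all loops to the singleton blocks (the hypothesis together with Theorem \ref{ceros} rules out loops in the larger blocks, and the annihilator part contributes none) and then use Theorem \ref{ceros} on the singleton blocks to match loop statuses between $B$ and $B'$. Your write-up merely makes explicit the bookkeeping (the $B_0$, $B'_0$ case and the transport on the $B'$ side) that the paper's proof leaves implicit, plus an optional direct computation for singletons.
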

\begin{proof}Let $B'=\{f_i\}_{i\in \Lambda}$ be a natural basis of $\A$. We define $\Lambda^1:= \bigcup_{\vert \Lambda_t \vert =1} \Lambda_t$.
By our assumption, $\loops(\A,B) \subseteq \{e_i \colon i \in \Lambda^1\}$. Note that if $i \in \Lambda^1$ then, by Theorem \ref{ceros}, up to reordering, $e_i \in \loops(\A,B)$ if and only if $f_i \in \loops(\A,B')$. Therefore $\vert \loops(\A,B) \vert =\vert \loops(\A,B') \vert $.
\end{proof}

\begin{theorem}\label{theo:notinvariance_of_loops}
Let $\A$ be an evolution algebra with a natural basis  $B=\{e_i\}_{i\in \Lambda}$. Consider a natural decomposition $B=B_0\cup B_1\cup \ldots \cup B_r$ and 
suppose that there exists  $t \in \{1,\dots, r\}$ such that
$|B_t|>1$ and  satisfies some of the following conditions
\begin{enumerate}[label=(\roman*)]
    \item $B_t\cap \loops(\A,B)\neq \emptyset$ and $B_t\cap \nloops(\A,B)\neq \emptyset$. \label{theo:notinvariance_of_loops1}
    \item $B_t \subseteq \loops(\A,B)$ and there exist $e_{q},e_{p}\in B_t$ such that $\alpha_{qp}\neq- \left(\frac{\omega_{qq}}{\omega_{qp}}\right)^2$ where $e_p^2=\alpha_{qp}e_q^2$.\label{theo:notinvariance_of_loops2}
    \item $B_t \subseteq \loops(\A,B)$ and $|B_t|>2$. \label{theo:notinvariance_of_loops3}
\end{enumerate}
Then there is a natural basis $B'$ such that $|\loops(\A,B)|\neq |\loops(\A,B')|$.
\end{theorem}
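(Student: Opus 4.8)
The plan is to build $B'$ by modifying $B$ only inside $\spa(B_t)$, keeping every basis vector indexed outside $\Lambda_t$. For such an index $k$ the vector $e_k$ still belongs to $B'$ and $e_k^2$ is the same element of $\A$, so its $e_k$-coordinate is unchanged; hence $e_k$ is a loop relative to $B$ if and only if it is one relative to $B'$. Thus it suffices to change, by a suitable change of basis inside $\spa(B_t)$, the number of loops among the vectors indexed by a conveniently chosen subset $J\subseteq\Lambda_t$.

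\textbf{A linear-algebra criterion.} Fix $\ell\in\Lambda_t$, so $e_k^2=\alpha_{\ell k}e_\ell^2$ for all $k\in\Lambda_t$ (with $\alpha_{\ell\ell}=1$), set $A:=\diag(\alpha_{\ell k})_{k\in J}$ (an invertible symmetric matrix) and $\mathbf d:=(\omega_{kk})_{k\in J}$. I would first establish: replacing $\{e_k\}_{k\in J}$ by $f_i:=\sum_{k\in J}X_{ik}e_k$ with $X\in GL_{|J|}(\bK)$ yields a natural basis $B'=\{f_i\}_{i\in J}\cup\{e_k\}_{k\notin J}$ of $\A$ if and only if $XAX^{T}$ is diagonal (the remaining orthogonality relations holding automatically), and in that case $f_i$ is a loop relative to $B'$ if and only if the $i$-th row of $X$ is not orthogonal, for the standard bilinear form on $\bK^{|J|}$, to $\mathbf d$. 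For the loop criterion one writes $f_i^2=d_i\,e_\ell^2$ with $d_i:=(XAX^{T})_{ii}$, observes that $d_i\neq 0$ (otherwise $f_i\in\ann(\A)\cap\spa(B_t)=\{0\}$ by \cite[Lemma 2.7]{Elduque/Labra/2015}), expands $e_\ell^2=\sum_{s}\omega_{\ell s}e_s$ in the new basis (only the indices in $J$ change), and uses $X^{-1}=AX^{T}\diag(d_i)^{-1}$ to see that the $f_i$-coordinate of $f_i^2$ equals $\sum_{k\in J}X_{ik}\alpha_{\ell k}\omega_{\ell k}=\sum_{k\in J}X_{ik}\omega_{kk}$. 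So the loop count among $\{f_i\}_{i\in J}$ is the number of rows of $X$ not orthogonal to $\mathbf d$, which for $X=I$ is $|\{k\in J:\omega_{kk}\neq0\}|$.

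\textbf{The three cases.} It remains to pick $J$ and $X$ (invertible, $XAX^{T}$ diagonal) changing this count. In case \ref{theo:notinvariance_of_loops1} take $J=\{p,q\}$ with $e_p\in\nloops(\A,B)$ and $e_q\in\loops(\A,B)$, and $\ell=q$, so $\mathbf d=(0,\omega_{qq})$; with $X=\bigl(\begin{smallmatrix}1&\mu\\1&\nu\end{smallmatrix}\bigr)$ the condition "$XAX^{T}$ diagonal" reads $\mu\nu=-\alpha_{qp}$, forcing $\mu,\nu\neq0$, and one chooses $\mu$ with $\mu\neq\nu$ (possible since $\bK$ is infinite); both rows then meet $\mathbf d$ nontrivially, so the count goes from $1$ to $2$. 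In case \ref{theo:notinvariance_of_loops2} take $J=\{p,q\}$, $\ell=q$: the standard orthogonal of $\mathbf d=(\omega_{pp},\omega_{qq})$ is spanned by $(\omega_{qq},-\omega_{pp})$, and $(\omega_{qq},-\omega_{pp})A(\omega_{qq},-\omega_{pp})^{T}=\alpha_{qp}(\omega_{qq}^2+\alpha_{qp}\omega_{qp}^2)\neq0$ precisely because $\alpha_{qp}\neq-(\omega_{qq}/\omega_{qp})^2$; taking this $A$-anisotropic vector as a row of $X$ and completing to an $A$-orthogonal basis of $\bK^2$ turns the corresponding $f$ into a no-loop while the other stays a loop, so the count drops from $2$ to $1$. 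In case \ref{theo:notinvariance_of_loops3} take $J=\{p,q,r\}\subseteq\Lambda_t$ (three distinct loop indices, available since $|B_t|>2$), $\ell=q$: the standard orthogonal of $\mathbf d$ is $2$-dimensional, hence not totally isotropic for the nondegenerate ternary form $A$ (whose Witt index is $\le1$), so it contains an $A$-anisotropic vector $x$; completing $x$ to an $A$-orthogonal basis yields an $X$ with one $f$ a no-loop, so the count drops from $3$ to at most $2$. In every case $|\loops(\A,B')|\neq|\loops(\A,B)|$.

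\textbf{Main obstacle.} The nontrivial points are: verifying that the modified vectors really form a natural basis, which is exactly the requirement "$XAX^{T}$ diagonal" together with the automatic non-vanishing of its diagonal entries via \cite[Lemma 2.7]{Elduque/Labra/2015}; and, in cases \ref{theo:notinvariance_of_loops2} and \ref{theo:notinvariance_of_loops3}, guaranteeing that the prescribed row of $X$ may be chosen $A$-anisotropic — this is where hypothesis \ref{theo:notinvariance_of_loops2} is used literally, and where in \ref{theo:notinvariance_of_loops3} one invokes the bound on the Witt index of a ternary quadratic form. Everything else is routine linear algebra, so I would isolate the criterion of the second paragraph as a standalone lemma and then simply feed in the three matrices $X$.
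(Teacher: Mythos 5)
Your proposal is correct, and it takes a genuinely different route from the paper, most visibly in case \ref{theo:notinvariance_of_loops3}. The paper works entirely with explicit $2\times 2$ changes of basis: in case \ref{theo:notinvariance_of_loops1} it mixes a loop $e_i$ with a no-loop $e_j$ via $f_i=e_i+\gamma e_j$, $f_j=-\gamma\alpha_{ij}e_i+e_j$ and checks by hand that both $f_if_i^2$ and $f_jf_j^2$ are nonzero; in case \ref{theo:notinvariance_of_loops2} it writes down the specific matrix $\bigl(\begin{smallmatrix}\gamma&1\\1&\beta\end{smallmatrix}\bigr)$ with $\gamma=-\alpha_{qp}\omega_{qp}/\omega_{qq}$, $\beta=\omega_{qp}/\omega_{qq}$ and verifies $f_qf_q^2=0$; and it disposes of case \ref{theo:notinvariance_of_loops3} by showing, through a chain of identities among the $\omega$'s, that the relation $\alpha_{ji}=-(\omega_{jj}/\omega_{ji})^2$ cannot hold for all pairs in $\Lambda_t$ when $|B_t|>2$, thereby reducing \ref{theo:notinvariance_of_loops3} to \ref{theo:notinvariance_of_loops2}. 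You instead isolate a clean general criterion — naturality of the modified basis is exactly "$XAX^T$ diagonal", and $f_i$ is a loop iff row $i$ of $X$ is not orthogonal to $\mathbf d=(\omega_{kk})_{k\in J}$ (I checked the key identity $\alpha_{\ell s}\omega_{\ell s}=\omega_{ss}$ that makes this work) — and then all three cases become statements about the diagonal quadratic form $A$ restricted to $\mathbf d^{\perp}$; in particular your case \ref{theo:notinvariance_of_loops3} is settled directly by the bound $\dim W\le n/2$ on totally isotropic subspaces, rather than by reduction to \ref{theo:notinvariance_of_loops2}. What your approach buys is conceptual clarity and a reusable lemma (it also makes transparent exactly where the anisotropy hypothesis of \ref{theo:notinvariance_of_loops2} enters); what the paper's approach buys is complete explicitness of the new basis, with no appeal to the theory of quadratic forms. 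Minor points you should make explicit when writing this up: that at most $\dim\mathbf d^{\perp}$ of the rows of $X$ can be orthogonal to $\mathbf d$ (so in cases \ref{theo:notinvariance_of_loops2} and \ref{theo:notinvariance_of_loops3} some loop survives and the counts genuinely differ), and that the passage from "not totally isotropic" to "contains an anisotropic vector" uses ${\rm char}(\bK)\neq 2$, which is available here.
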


\begin{proof}
First, assume that $B_t$ satisfies  
\ref{theo:notinvariance_of_loops1}. Let $e_{i} \in B_t\cap \loops(\A,B) $ and $e_{j} \in B_t\cap \nloops(\A,B)$. Let $\alpha_{ij}\in \mathbb{K}^{\times}$ such that $e_{j}^2=\alpha_{ij}e_{i}^2$ and  $\gamma \in \mathbb{K}\setminus\{0,1\}$ such that $\gamma^2\neq \frac{-1}{\alpha_{ij}}$. Then $\omega_{i j}=\frac{1}{\alpha_{ij}}\omega_{j}=0$. Consider the basis $B'=\{f_k\}_{k\in \Lambda}$ where
$$f_k=\left\{ \begin{array}{ll}
     e_k,& \text{if } k \neq i, j,  \\
     e_{i}+\gamma e_{j},&\text{if } k=i,  \\
     -\gamma \alpha_{ij} e_{i}+e_{j},&\text{if } k=j.
\end{array}\right.$$

Note that $f_{i}f_{j}=-\gamma \alpha_{ij}e_{i}^2+\gamma e_{j}^2=-\gamma \alpha_{ij}e_{i}^2+\gamma \alpha_{ij}e_{i}^2=0.$
So, $B'$ is a natural basis for $\A$. Observe now that $f_kf_k^2=0$ if and only if $e_ke_k^2=0$ for all $k \in \Lambda\setminus\{i, j\}$. 
Now we will prove that  $f_i f_i^2 \neq 0$ and $f_jf_j^2 \neq 0$. Indeed,
$$
e_{i}e_{i}^2=\omega_{ii}e^{2}_{i},\phantom{a} e_{j}e_{j}^2=e_{j}e_{i}^2=0, f_{i}^2=(1+\alpha_{ij}\gamma^2)e^{2}_{i}\text{ and } f_{j}^2=(\alpha_{ij}+\alpha_{ij}^2\gamma^2)e^{2}_{i}.
$$
Then
$$
\begin{array}{lclclcl}
f_{i}f_{i}^2&=&(e_{i}+\gamma e_{j})(1+\alpha_{ij}\gamma^2)e_{i}^2 =(1+\alpha_{ij}\gamma^2)\omega_{i i}e_{i}^2 \not= 0\\ [0.2cm]
f_{j}f_{j}^2 &=&(-\gamma \alpha_{ij} e_{i}+e_{j})(\alpha_{ij}+\alpha_{ij}^2\gamma^2)e^{2}_{i}={-\gamma \alpha_{ij}}(\alpha_{ij}+\alpha_{ij}^2\gamma^2) \omega_{i i}e^{2}_{i}\not=0
\end{array}
$$
Therefore   $|\loops(\A,B)|=|\loops(\A,B')|+1.$

Now, assume that $B_t$ satisfies \ref{theo:notinvariance_of_loops2}. Define $\gamma := -\alpha_{qp}\frac{\omega_{qp}}{\omega_{qq}}$ and $\beta:=\frac{\omega_{qp}}{\omega_{qq}}$. Consider the set $B'=\{f_k\}_{k\in \Lambda}$ where
$$f_k=\left\{ \begin{array}{ll}
     e_k,& \text{if } k \neq q,p,  \\
     \gamma e_{q}+ e_{p},&\text{if } k=q,  \\
      e_{q}+\beta e_{p},&\text{if } k=p.
\end{array}\right.$$

By hypotheses $\gamma \beta\neq 1$ then $B'$ is a basis for $\A$. Similarly to item \ref{theo:notinvariance_of_loops1},  $f_kf_k^2=0$ if and only if $e_ke_k^2=0$ for all $k \in \Lambda\setminus \{q,p\}$. Therefore

$$f_{q}f_{q}^2=(\gamma e_{q}+ e_{p})(\gamma^2+\alpha_{qp})e_{q}^2
=(\gamma^2+\alpha_{qp})(\gamma \omega_{qq} e_{q}^2+\omega_{qp}e_{p}^2) 
=(\gamma^2+\alpha_{qp})(\gamma \omega_{qq}+\omega_{qp}\alpha_{qp})e_{q}^2=0.$$
So  $|\loops (\A,B')|<|\loops (\A,B)|$.

\vspace{0.2cm} Finally, assume that $B_t$ satisfies \ref{theo:notinvariance_of_loops3}. Suppose that  $\alpha_{ji}=-\left(\frac{\omega_{jj}}{\omega_{ji}}\right)^2$ for all $j, i \in \Lambda_t$. Then we will get a contradiction. Indeed, let $q,p,\ell \in \Lambda_t$. Since $\alpha_{qp}=-\frac{\omega_{qq}^2}{\omega_{qp}^2}$ then $\omega_{pq}=-\frac{\omega_{qq}^3}{\omega_{qp}^2}$ and analogously $\omega_{\ell q}=-\frac{\omega_{qq}^3}{\omega_{qt}^2}$. Consequently $\omega_{pp}=-\frac{\omega_{qq}^2}{\omega_{qp}}$ and $\omega_{p\ell}=-\omega_{q\ell}\frac{\omega_{qq}^2}{\omega_{qp}^2}$. Moreover, using that $\alpha_{p\ell}=-\frac{\omega_{pp}^2}{\omega_{p\ell}^2}$ we get that $\omega_{tq}=\alpha_{pt}\w_{pq}=\frac{\omega_{qq}^3}{\omega_{q\ell}^2}$, which is a contradiction. Therefore there exist $q,p \in \Lambda_t$ such that $\alpha_{qp}\neq- \frac{\omega_{qq}^2}{\omega_{qp}^2}$ and the affirmation follows from item \ref{theo:notinvariance_of_loops2}.
\end{proof}

\begin{prop}\label{B_i: pequeno}
Let $\A$ be a non-degenerate evolution algebra and $B=\{e_i\}_{i\in \Lambda}$ a natural basis of $\A$. Consider a natural decomposition $B=B_1\cup \ldots \cup B_r$ and suppose that $|B_s|\leq 2$ for all $s \in \{1,\dots, r \}$  such that $B_s\cap L(\A,B)\neq \emptyset$.
If for every $B_t$, with $|B_t|=2$, some of the following conditions is satisfied:
\begin{enumerate}[label=(\roman*)]
    \item $B_t \subseteq \nloops (\A,B)$, 
    \item $B_t \subseteq \loops(\A,B)$ and $\alpha_{jk}=-\left(\frac{\omega_{jj}}{\omega_{jk}}\right)^2$ for all $j, k \in \Lambda_t$ where $e_k^2=\alpha_{jk}e_j^2$,
\end{enumerate}
then $|\loops (\A,B)|=|\loops(\A,B')|$ for all natural basis $B'$ of $\A$.
\end{prop}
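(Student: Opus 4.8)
The plan is to compare $\loops(\A,B)$ with $\loops(\A,B')$ block by block along the natural decomposition and then add up. Since $\A$ is non-degenerate, $\ann(\A)=0$, so $B_0=\emptyset$ and the natural decomposition is $B=B_1\cup\cdots\cup B_r$. Given an arbitrary natural basis $B'$ of $\A$, by Remark~\ref{unicidaddes} we may reorder it as $B'=B'_1\cup\cdots\cup B'_r$ with $B'_t\subseteq\spa(B_t)$ and $|B'_t|=|B_t|$ for all $t$; a reordering leaves $|\loops(\A,B')|$ unchanged, so it suffices to prove
\[
|B_t\cap\loops(\A,B)|=|B'_t\cap\loops(\A,B')|\qquad\text{for each }t\in\{1,\dots,r\},
\]
since $\loops(\A,B)$ is the disjoint union of the $B_t\cap\loops(\A,B)$, and similarly for $B'$.

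First I would dispose of the easy blocks using Theorem~\ref{ceros}. If $|B_t|\geq 3$, the standing hypothesis forces $B_t\cap\loops(\A,B)=\emptyset$, i.e.\ $B_t\subseteq\nloops(\A,B)$, hence $B'_t\subseteq\nloops(\A,B')$ and both sides vanish. If $|B_t|=1$, Theorem~\ref{ceros} applied to this singleton says its element is a loop for $B$ iff the corresponding element of $B'_t$ is a loop for $B'$, so the counts agree. If $|B_t|=2$ and alternative (i) holds (so $B_t\subseteq\nloops(\A,B)$), again $B'_t\subseteq\nloops(\A,B')$. Observe that the hypothesis forces every block of size $2$ to satisfy (i) or (ii), so no ``mixed'' block occurs.

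The remaining, and main, case is $|B_t|=2$ under alternative (ii): $B_t=\{e_q,e_p\}\subseteq\loops(\A,B)$, $e_p^2=\alpha_{qp}e_q^2$, $\alpha_{qp}=-(\omega_{qq}/\omega_{qp})^2$; here the goal is to show $B'_t\subseteq\loops(\A,B')$, giving count $2$ on both sides. Put $\lambda:=\omega_{qq}/\omega_{qp}$, which is defined and nonzero because $e_q$ a loop gives $\omega_{qq}\neq0$ and $e_p$ a loop gives $p\in D^1(p)=D^1(q)$, hence $\omega_{qp}\neq0$; thus $\alpha_{qp}=-\lambda^2$. Since $B_0=\emptyset$, $B'_t\subseteq\spa(B_t)$ with $|B'_t|=2$, so $B'_t=\{g,h\}$ with $g=ae_q+be_p$, $h=ce_q+de_p$, $ad-bc\neq0$. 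Using $e_qe_q^2=\omega_{qq}e_q^2$, $e_pe_q^2=\omega_{qp}e_p^2=\omega_{qp}\alpha_{qp}e_q^2=-\omega_{qq}\lambda\,e_q^2$ and $e_p^2=\alpha_{qp}e_q^2$, a short computation gives $gg^2=\omega_{qq}(a-b\lambda)^2(a+b\lambda)e_q^2$, and likewise for $h$. By Remark~\ref{equi_loops} and non-degeneracy, $g\in\nloops(\A,B')$ iff $a\in\{b\lambda,-b\lambda\}$ (and then $b\neq0$, else $g=0$). Assuming $a=\varepsilon b\lambda$ with $\varepsilon\in\{1,-1\}$, the natural-basis relation $gh=0$ reads $(ac-bd\lambda^2)e_q^2=0$, so $ac=bd\lambda^2$, whence $c=\varepsilon d\lambda$; but then $g$ and $h$ are both scalar multiples of $\varepsilon\lambda e_q+e_p$, contradicting $ad-bc\neq0$. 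Therefore $g$, and by the same argument $h$, is a loop, i.e.\ $B'_t\subseteq\loops(\A,B')$. Summing the block identities over $t$ then yields $|\loops(\A,B)|=|\loops(\A,B')|$.

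The delicate point is exactly this last case: $\spa(B_t)$ need not be a subalgebra (its square lies in $\spa(D^1(q))$, which may stick out of it), so the products $e_qe_q^2,\,e_pe_q^2$ must be reduced through $e_p^2=\alpha_{qp}e_q^2$ rather than computed inside $\spa(B_t)$. The hypothesis $\alpha_{qp}=-(\omega_{qq}/\omega_{qp})^2$ is precisely what makes $gg^2$ factor as $\omega_{qq}(a-b\lambda)^2(a+b\lambda)e_q^2$, and once that factorization is in hand non-degeneracy together with $gh=0$ force both new basis vectors of the block to be loops.
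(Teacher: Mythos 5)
Your proof is correct, and it is essentially the paper's argument: the peripheral blocks are handled via Theorem \ref{ceros}, and for a size-2 loop block the same computation of $f f^2$ inside $\spa(B_t)$ (reduced through $e_p^2=\alpha_{qp}e_q^2$ and the hypothesis $\alpha_{qp}=-(\omega_{qq}/\omega_{qp})^2$), combined with orthogonality and linear independence of the two new basis vectors, yields the contradiction. The only cosmetic differences are that you argue block-by-block directly rather than by contradicting the total count, and you treat both roots $a=\pm b\lambda$ uniformly where the paper discards one using $f_k^2\neq 0$.
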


\begin{proof} Let $\Lambda= \Lambda_1\cup\ldots \cup \Lambda_r$ be a natural decomposition of $\Lambda$ relative to $B$. Let us define 
$$\Lambda^1:= \bigcup_{\vert \Lambda_t \vert =1}\Lambda_t, \phantom{aaa} \Lambda^2:= \bigcup_{\substack{| \Lambda_t | =2 \\ B_t \subseteq \loops(\A,B)}} \Lambda_t \phantom{aa} \text{ and }  \phantom{aa}\Lambda^3:= \bigcup_{\substack{| \Lambda_t | \geq 2 \\ B_t \subseteq \nloops(\A,B)}} \Lambda_t.$$
Suppose, contrary to our claim, that there is a natural basis $B'=\{f_k\}_{k \in \Lambda}$ of $\A$ such that $|\loops(\A,B')|\neq |\loops(\A,B)|$. Let   $B'=B'_1\cup \ldots \cup B'_r$ be a natural decomposition, ordered in such a way that $B'_t \subseteq \spa (B_t)$  for any $t$ and let $M_{B'}=(\omega_{kj}')$ be the structure matrix of $\A$ relative to $B'$.  By Theorem \ref{ceros} we have that $\big|\{ k \in \Lambda^1 :\omega_{kk}'\neq 0\} \big|= \big|\{ k \in \Lambda^1 :\omega_{kk}\neq 0\} \big|$ and $\omega_{jj}'=0$ for all $j \in \Lambda^3$. Therefore there exists $h\in \{1,\dots, r\} $ such that 
$B_h=\{e_{i},e_{\ell}\}\subseteq \loops (\A,B)$ and  $B_h'=\{f_{k},f_{j}\}\not\subseteq \loops (\A,B')$. Without loss of generality, we assume that  $f_{k}f_{k}^2= 0$. Then we have that
$$ f_{k}=x_{11}e_{i}+x_{12}e_{\ell}
\text{ and }
f_{j}=x_{21}e_{i}+x_{22}e_{\ell},
$$
where $x_{ij} \in \bbK $. Consequently, using that $e_{\ell}^2=\alpha_{i\ell}e^2_{i}$, $e_{i}e_{i}^2=\omega_{ii}e^2_{i}$ and $e^{2}_{i}e_{\ell}=\omega_{i\ell}e_{\ell}^2$  it follows that
\begin{eqnarray}\label{eq:f1f12}
f_{k}f_{k}^2&=&(x_{11}e_{i}+x_{12}e_{\ell})(x_{11}^2+x_{12}^2\alpha_{i\ell})e_{i}^2 \nonumber \\
&=&(x_{11}^2+x_{12}^2\alpha_{i\ell})(x_{11}\omega_{ii}e_{i}^2+x_{12}\omega_{i\ell}e_{\ell}^2)\nonumber \\
&=&(x_{11}^2+x_{12}^2\alpha_{i\ell})(x_{11}\omega_{ii}+x_{12}\omega_{i\ell}\alpha_{i\ell})e_{i}^2=0.
\end{eqnarray}
As $f_{k}^2\neq 0$ then $x_{11}^2+x_{12}^2\alpha_{i\ell}\neq 0 $. Thus
$$x_{11}\omega_{ii}-x_{12}\omega_{i\ell}\frac{\omega_{ii}^2}{\omega_{i\ell}^2}=\omega_{ii}\left(x_{11}-x_{12}\frac{\omega_{ii}}{\omega_{i\ell}}\right)=0,$$
where we use that, by hypotheses, $\alpha_{i\ell}=-\frac{\omega_{ii}^2}{\omega_{i\ell}^2}$.  The fact that  $\w_{ii} \neq 0$ implies $x_{11}=\frac{\omega_{ii}}{\omega_{i\ell}}x_{12}$. Therefore $x_{11}x_{12} \neq 0$ because $f_k \in B^{\prime}$. On the other hand, since $B'$ is a natural basis 
\begin{eqnarray}\label{eq:orto}
f_{k}f_{j} =x_{11}x_{21}e_{i}^2+x_{12}x_{22}e_{\ell}^2=(x_{11}x_{21}+x_{12}x_{22}\alpha_{i\ell})e_{i}^2=0.
\end{eqnarray}
Now, by Equation \eqref{eq:orto} we obtain that
$$x_{21}=\frac{x_{12}x_{22}}{x_{11}}\frac{\omega_{ii}^2}{\omega_{i\ell}^2}=x_{12}x_{22}\frac{\omega_{ii}^2}{\omega_{i\ell}^2}\frac{\omega_{i\ell}}{x_{12}\omega_{ii}}=\frac{\omega_{ii}}{\omega_{i\ell}}x_{22}.$$
Finally, we conclude that 
$$x_{11}x_{22}-x_{12}x_{21}=x_{12}\frac{\omega_{ii}}{\omega_{i\ell}}x_{22}-x_{12}\frac{\omega_{ii}}{\omega_{i\ell}}x_{22}=0.$$
Therefore $f_{k}$ and $f_{j}$ are linearly dependent, which is a contradiction. 
\end{proof}

\begin{prop}\label{ceros_omega}
Let $\A$ be an evolution algebra with a natural basis $B=\{e_i\}_{i \in \Lambda}$  such that the structure matrix $M_B=(\omega_{ij})$ satisfies $\omega_{ij}=0$ if and only if $\omega_{ji}=0$, for all $i,j \in \Lambda$, $i\not =j$. Consider the natural decomposition $\Lambda=\Lambda_0\cup \ldots \cup \Lambda_r$ relative to $B$. Suppose that $e_i \in \nloops(\A,B)$. Then $e_j \in \nloops(\A,B)$ for every $j \in \Lambda{(i)}$.
\end{prop}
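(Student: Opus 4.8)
The plan is to reduce the statement, by means of the natural‑decomposition data, to a single assertion about the entry $\omega_{ij}$, and then to close the argument with the symmetry hypothesis on $M_B$. First I would unwind the two hypotheses. Since $e_i\in\nloops(\A,B)$ we have $\omega_{ii}=0$, i.e. $i\notin D^{1}(i)=\supp_B(e_i^2)$. Since $j\in\Lambda(i)$, there is $\alpha_{ij}\in\bK^{\times}$ with $e_j^2=\alpha_{ij}e_i^2$ (this uses $\Lambda(i)\subseteq\Lambda\setminus\Lambda_0$, so that $e_i^2\neq 0$); as multiplication by the nonzero scalar $\alpha_{ij}$ does not alter the support, $D^{1}(j)=\supp_B(e_j^2)=\supp_B(e_i^2)=D^{1}(i)$. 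Consequently $\omega_{jj}\neq 0$ if and only if $j\in D^{1}(j)=D^{1}(i)$, i.e. if and only if $\omega_{ij}\neq 0$, so proving $e_j\in\nloops(\A,B)$ is the same as proving $\omega_{ij}=0$.

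The case $i=j$ is trivial, since then $e_j=e_i\in\nloops(\A,B)$ by assumption. So assume $i\neq j$ and suppose, for contradiction, that $\omega_{ij}\neq 0$. By the hypothesis on $M_B$, and since $i\neq j$, this forces $\omega_{ji}\neq 0$, that is, $i\in D^{1}(j)$. But $D^{1}(j)=D^{1}(i)$, whence $i\in D^{1}(i)$, i.e. $\omega_{ii}\neq 0$ --- contradicting $e_i\in\nloops(\A,B)$. Therefore $\omega_{ij}=0$, and by the equivalence recorded above $\omega_{jj}=0$; hence $e_j\in\nloops(\A,B)$, as wanted.

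There is no real difficulty here: the argument is bookkeeping with supports, together with the observation that the symmetry condition $\omega_{ij}=0\iff\omega_{ji}=0$ is exactly what permits one to carry the absence of a loop at $i$ across the edge joining $i$ and $j$. The two points to keep in mind are that replacing $e_j^2$ by a nonzero scalar multiple leaves $D^{1}$ unchanged (this is what identifies the condition $\omega_{jj}\neq 0$ with $\omega_{ij}\neq 0$), and that $i\notin\Lambda_0$, so that $\alpha_{ij}$ is genuinely invertible. That the symmetry hypothesis is essential can be seen from the three‑dimensional example preceding Theorem~\ref{theo:invariance_of_loops}: there $e_2\in\Lambda(1)$ and $e_1\in\nloops(\A,B)$, but $e_2$ is a loop --- and indeed the structure matrix there fails to be symmetric in the sense of the present statement.
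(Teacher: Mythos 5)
Your argument is correct and is essentially the paper's own proof: both rest on the proportionality $e_j^2=\alpha_{ij}e_i^2$ coming from $j\in\Lambda(i)$ (so $\omega_{jk}=\alpha_{ij}\omega_{ik}$ for all $k$, equivalently $D^1(j)=D^1(i)$) combined with the hypothesis $\omega_{ij}=0\iff\omega_{ji}=0$, the only difference being that you run the chain $\omega_{ii}=0\Rightarrow\omega_{ji}=0\Rightarrow\omega_{ij}=0\Rightarrow\omega_{jj}=0$ contrapositively through supports rather than directly. One cosmetic point: what $\Lambda(i)\subseteq\Lambda\setminus\Lambda_0$ literally gives is $e_j^2\neq 0$; the invertibility of $\alpha_{ij}$ also needs $e_i^2\neq 0$ (i.e.\ $i\in\Lambda(i)$), an implicit reading of Definition~\ref{lambdaj} that the paper's proof uses as well.
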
 

\begin{proof} 
Since $\omega_{ii}=0$ and $\rk(\{e_i^2,e_j^2\})=1$  we get that $\omega_{ji}=0$. By symmetry $\omega_{ij}=0$. Again, as $\rk(\{e_i^2,e_j^2\})=1$ then $\omega_{jj}=0$.
\end{proof}

\begin{remark} \rm
If $A$ is an evolution algebra and $B$ is a natural basis satisfying the assumptions of the previous proposition and $B=B_0 \cup \ldots \cup B_r $ is a natural decomposition then either $B_t \subseteq \loops(\A,B)$ or $B_t \subseteq \nloops(\A,B)$ for every $t \in \{0,1,\ldots,r\}$.  

\end{remark}

\begin{corollary} \label{recopi}Let $\A$ be a non-degenerate evolution algebra with a natural basis  $B=\{e_i\}_{i\in \Lambda}$. Consider a natural decomposition $B= B_1\cup \ldots \cup B_r$. Then the number of loops in $\A$ is invariant by change of natural basis if $|B_t|\leq 2$ for all $t \in \{1,\dots,r \}$ such that $B_t\cap \loops(\A,B)\neq \emptyset$ and for every $s$ such that $|B_s|=2$, some of the following conditions is satisfied:
\begin{enumerate}[label=(\roman*)]
    \item $B_s \subseteq \nloops(\A,B),$
    \item $B_s \subseteq \loops(\A,B)$ and $\alpha_{jk}=-\left(\omega_{jj} /\omega_{jk}\right)^2$ for all $j, k \in \Lambda_s$ where $e_k^2=\alpha_{jk}e_j^2$.
\end{enumerate}
\noindent Otherwise the number of loops of $\A$ depend of the natural basis.
\end{corollary}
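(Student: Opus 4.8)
The plan is to observe that this corollary is merely the conjunction of Proposition \ref{B_i: pequeno} and Theorem \ref{theo:notinvariance_of_loops}, so the proof reduces to unwinding the stated hypotheses and performing a short case analysis. First note that since $\A$ is non-degenerate we have $\ann(\A)=0$, hence $B_0=\emptyset$ and the natural decomposition genuinely has the form $B=B_1\cup\ldots\cup B_r$ as written in the statement.

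\emph{Sufficiency.} The two displayed conditions — that $|B_t|\le 2$ whenever $B_t\cap\loops(\A,B)\neq\emptyset$, and that (i) or (ii) holds for every block $B_s$ with $|B_s|=2$ — are verbatim the hypotheses of Proposition \ref{B_i: pequeno}. Hence that proposition yields $|\loops(\A,B)|=|\loops(\A,B')|$ for every natural basis $B'$ of $\A$.

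\emph{Necessity.} Suppose the displayed conditions fail; I must produce a natural basis $B'$ with $|\loops(\A,B)|\neq|\loops(\A,B')|$, and for this it suffices to exhibit a block of the decomposition meeting one of the three alternative hypotheses of Theorem \ref{theo:notinvariance_of_loops}, since that theorem then furnishes the required $B'$. Negating the hypothesis, there are two ways it can fail. In the first, some $B_t$ with $B_t\cap\loops(\A,B)\neq\emptyset$ has $|B_t|\ge 3$: if moreover $B_t\cap\nloops(\A,B)\neq\emptyset$ then $B_t$ meets both $\loops(\A,B)$ and $\nloops(\A,B)$ and $|B_t|>1$, so Theorem \ref{theo:notinvariance_of_loops} \ref{theo:notinvariance_of_loops1} applies; otherwise $B_t\subseteq\loops(\A,B)$ with $|B_t|>2$, so Theorem \ref{theo:notinvariance_of_loops} \ref{theo:notinvariance_of_loops3} applies. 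In the second, every block meeting $\loops(\A,B)$ has size at most $2$ but some $B_s$ with $|B_s|=2$ satisfies neither (i) nor (ii): failure of (i) forces $B_s\cap\loops(\A,B)\neq\emptyset$; if in addition $B_s\cap\nloops(\A,B)\neq\emptyset$ we again invoke Theorem \ref{theo:notinvariance_of_loops} \ref{theo:notinvariance_of_loops1}, while if $B_s\subseteq\loops(\A,B)$ the failure of (ii) produces $e_q,e_p\in B_s$ with $q\neq p$ and $\alpha_{qp}\neq-(\omega_{qq}/\omega_{qp})^2$, which is precisely the hypothesis of Theorem \ref{theo:notinvariance_of_loops} \ref{theo:notinvariance_of_loops2}. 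In every case Theorem \ref{theo:notinvariance_of_loops} supplies the desired basis.

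The only point requiring a little care is the bookkeeping of the nested negations, together with the remark that for a size-$2$ block $\Lambda_s=\{q,p\}$ the relation $\alpha_{qp}=-(\omega_{qq}/\omega_{qp})^2$ is symmetric under interchanging $q$ and $p$ (using $\omega_{pp}=\alpha_{qp}\omega_{qp}$, $\omega_{qp}=\alpha_{pq}\omega_{pp}$ and $\alpha_{qp}\alpha_{pq}=1$), so that ``(ii) fails'' is an unambiguous statement about that block. There is no genuine obstacle; all the substance lies in the two cited results.
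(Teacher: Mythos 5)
Your proposal is correct and follows exactly the paper's own route: invariance is read off from Proposition \ref{B_i: pequeno}, and the ``otherwise'' direction from Theorem \ref{theo:notinvariance_of_loops}. You simply spell out the case analysis of the negated hypotheses (and the symmetry of the $\alpha_{qp}$ condition), which the paper leaves implicit in its two-sentence proof.
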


\begin{proof} The statement that the number of loops is invariant by the change of base follows from Proposition \ref{B_i: pequeno}. On the other hand, by Theorem \ref{theo:notinvariance_of_loops}, in any other case the number of loops in $\A$ depends on the natural basis.
\end{proof}

\end{document}